\def\N{\mathbb N}
\def\R{\mathbb R}
\def\sm{\setminus}
\def\wt{\widetilde}
\def\vp{\varphi}
\def\ov{\overline}
\def\ul{\underline}
\def\({\Bigl (}
\def\){\Bigr )}
\def\div{\mbox{\rm div}}
\def\dsp{\displaystyle}
\def\x{{\bf x}}
\def\y{{\bf y}}
\def\n{{\bf n}}
\def\s{{s}}
\def\p{{\bf p}}
\def\q{{\bf q}}
\def\G{{\Gamma}}
\def\O{{\Omega}}
\def\d{{\rm d}}
\def\C{\mathcal C}
\def\D{{\cal D}}
\def\cells{{\cal M}}
\def\faces{{\cal F}}
\def\nodes{{\cal V}}
\def\edges{{\cal E}}
\def\T{{\cal T}}
\def\to{\rightarrow} 
\def\trace{\gamma} 
\def\grad{{\nabla}}
\def\OG{\O\sm\ov\G}
\def\dofm{\mathit{dof}_{\D_m}}
\def\doff{\mathit{dof}_{\D_f}}
\def\dofDirm{\mathit{dof}_{Dir_m}}
\def\dofDirf{\mathit{dof}_{Dir_f}}
\def\dofDir{\mathit{dof}_{Dir}}
\def\del{\partial}
\renewcommand{\restriction}{\mathord{\upharpoonright}}
\newcommand{\ssubset}{\subset\joinrel\subset}
\newtheorem{remark}{Remark}[section]
\newtheorem{definition}{Definition}[section]
\newtheorem{lemma}{Lemma}[section]
\newtheorem{proposition}{Proposition}[section]
\newtheorem{corollary}{Corollary}[section]
\newenvironment{proof}[1][Proof]{\begin{trivlist}
\item[\hskip \labelsep {\bfseries #1}]}{\end{trivlist}}
\newcommand{\qed}{\hfill \ensuremath{\Box}}
\begin{document}

\title{Gradient discretization of Hybrid Dimensional Darcy Flows in Fractured Porous Media 
with discontinuous pressures at the matrix fracture interfaces}

\author{
K. Brenner \thanks{
Laboratoire de Math\'ematiques J.A. Dieudonn\'e, 
UMR 7351 CNRS, University Nice Sophia Antipolis, and team COFFEE, INRIA Sophia Antipolis
M\'editerran\'ee, Parc Valrose 06108 Nice Cedex 02, France, \{konstantin.brenner, julian.hennicker, roland.masson\}@unice.fr } , 
J. Hennicker {\footnotemark[1] }\thanks{CSTJF, TOTAL S.A. - 
Avenue Larribau, 64018 Pau, France} , 
R. Masson {\footnotemark[1] },
P. Samier {\footnotemark[2] }
}

\maketitle

\begin{abstract}
{We investigate the discretization of Darcy flow through fractured porous media on general meshes. 
We consider a hybrid dimensional model, invoking a complex network of planar fractures. The model accounts for matrix-fracture interactions and fractures acting either as drains or as barriers, i.e. we have to deal with pressure discontinuities at matrix-fracture interfaces. 
The numerical analysis is performed in the general framework of gradient discretizations which is 
extended to the model under consideration. 
Two families of schemes namely the Vertex Approximate Gradient scheme (VAG) and the Hybrid 
Finite Volume scheme (HFV) are detailed and shown to satisfy the gradient scheme framework, 
which yields, in particular, convergence.
Numerical tests confirm the theoretical results.}
{Gradient Discretization; Darcy Flow, Discrete Fracture Networks, Finite Volume}
\end{abstract}

\section{Introduction}

This work deals with the discretization of 
Darcy flows in fractured porous media for which the fractures are 
modelized as interfaces of codimension one. In this framework, the $d-1$ dimensional 
flow in the fractures is 
coupled with the $d$ dimensional flow in the matrix leading to the so called,  
hybrid dimensional Darcy flow model. 
We consider the case for which the pressure 
can be discontinuous at the matrix fracture interfaces in order to account for fractures 
acting either as drains or as barriers as described in \cite{FNFM03}, \cite{MJE05} and \cite{ABH09}. 
In this paper, we will study the family of models described in \cite{MJE05} and \cite{ABH09}.

It is also assumed in the following that the pressure is continuous 
at the fracture intersections. This corresponds  
to a ratio between the permeability at the fracture intersection and the width 
of the fracture assumed to be high compared with the ratio between the tangential 
permeability of each fracture and its length. 
We refer to \cite{FFSR14} for a more general reduced model 
taking into account discontinuous pressures at fracture intersections in dimension $d=2$. \\

The discretization of such hybrid dimensional Darcy flow model 
has been the object of several works. 
In \cite{FNFM03}, \cite{KDA04}, \cite{ABH09} a cell-centered Finite Volume scheme using a 
Two Point Flux Approximation (TPFA) is proposed assuming the orthogonality of the mesh and 
isotropic permeability fields. Cell-centered Finite Volume schemes 
have been extended to general meshes and anisotropic permeability fields 
using  MultiPoint Flux Approximations (MPFA)  in \cite{TFGCH12}, \cite{SBN12}, and \cite{AELH14}. 
In \cite{MJE05}, a Mixed Finite Element (MFE) method is proposed and a 
MFE discretization adapted to non-matching fracture and matrix meshes is studied 
in \cite{DS12}. More recently the Hybrid Finite Volume (HFV) scheme, introduced in \cite{EGH09},  
has been extended in \cite{FFJR15} for the non matching 
discretization of two reduced fault models.  Also a Mimetic Finite Difference (MFD) scheme 
is used in \cite{AFSVV15} in the matrix domain coupled with a TPFA scheme in the fracture network.  
Discretizations of the related reduced model \cite{MAE02} assuming a 
continuous pressure at the matrix fracture interfaces have been proposed 
in \cite{MAE02} using a MFE method, in \cite{RJBH06} using a 
Control Volume Finite Element method (CVFE), in \cite{GSDFN} using the HFV scheme, 
and in \cite{GSDFN, BGGM14} using an extension of the 
Vertex Approximate Gradient (VAG) scheme introduced in \cite{Eymard.Herbin.ea:2010}.

In terms of convergence analysis, 
the case of continuous pressure models 
at the matrix fracture interfaces \cite{MAE02} is studied in 
\cite{GSDFN} for a general fracture network but 
the current state of the art for the discontinuous pressure models at the 
matrix fracture interfaces is still limited to rather simple geometries. 
Let us recall that the family of models introduced in \cite{MJE05} and \cite{ABH09}
depends on a quadrature parameter denoted by $\xi\in [\frac{1}{2},1]$ 
for the approximate integration in the width of the fractures.
Existing convergence analysis for such models cover
the case of one non immersed fracture separating the domain into two subdomains using 
a MFE discretization in \cite{MJE05} or a non matching MFE discretization in \cite{DS12} 
for the range $\xi\in (\frac{1}{2},1]$. 
In \cite{ABH09}, the case of one fully immersed fracture in dimension $d=2$ using a TPFA discretization 
is analysed for the full range of parameters $\xi\in [\frac{1}{2},1]$. \\

The main goal of this paper is to study the discretizations of such models and their convergence properties 
by extension of the gradient scheme framework. 
The gradient scheme framework has been introduced in \cite{Eymard.Herbin.ea:2010}, 
\cite{DEGH}, \cite{koala} to analyse  the convergence of numerical 
methods for linear and nonlinear second order diffusion problems. As shown in \cite{DEGH},  this 
framework accounts for various conforming and non conforming 
discretizations such as Finite Element methods, Mixed and Mixed Hybrid Finite Element methods, and 
some Finite Volume schemes like symmetric MPFA, the VAG schemes \cite{Eymard.Herbin.ea:2010}, 
and the HFV schemes \cite{EGH09}. 

Our extension of the gradient scheme framework to the hybrid dimensional Darcy flow 
model will account for general fracture networks including fully, partially and non immersed fractures as 
well as fracture intersections in a 3D surrounding matrix domain. Each individual fracture will 
be assumed to be planar. 
The framework will cover the range of parameters $\xi\in (\frac{1}{2},1]$ 
excluding the value $\xi=\frac{1}{2}$ in order to allow for a primal variational formulation. 

Two examples of gradient discretizations will be provided, namely the extension of 
the VAG and HFV schemes defined in 
\cite{Eymard.Herbin.ea:2010} and \cite{EGH09} to the family of hybrid dimensional Darcy flow models. 
In both cases, it is assumed that the fracture network is conforming to the mesh in the sense that it is 
defined as a collection of faces of the mesh. The mesh is assumed to be polyhedral with possibly non 
planar faces for the VAG scheme and planar faces for the HFV scheme. 
Two versions of the VAG scheme will be studied, the first corresponding to 
the conforming ${\mathbb P}_1$ finite element on a tetrahedral submesh, and the second 
to a finite volume scheme using lumping for the source terms as well as for 
the matrix fracture fluxes. 
The VAG scheme has the advantage to lead 
to a sparse discretization on tetrahedral or mainly tetrahedral meshes. 
It will be compared to the HFV discretization 
using face and fracture edge unknowns in addition to the cell unknowns. 
Note that the HFV scheme of \cite{EGH09} has been generalized 
in \cite{DEGH10} as the family of Hybrid Mimetic Mixed methods which which encompasses 
the family of MFD schemes \cite{Bre05}. 
In this article, we will focus without restriction on 
the particular case presented in \cite{EGH09} for the sake of simplicity. \\

In section \ref{sec_model} we introduce the geometry of the matrix and fracture domains 
and present the strong and weak formulation of the model. 
Section \ref{sec_GS} is devoted to the introduction of the general framework 
of gradient discretizations and the derivation of the error estimate \ref{properror}.
In section \ref{sec_VAGHFV} we define and investigate the families of VAG and HFV discretizations.
Having in mind applications to multi-phase flow, 
we also present a Finite Volume formulation involving conservative fluxes, 
which applies for both schemes. In section \ref{sec_num}, the VAG and HFV schemes are compared 
in terms of accuracy and CPU efficiency for both Cartesian and tetrahedral meshes on hererogeneous 
isotropic and anisotropic media using a family of analytical solutions.

\section{Hybrid dimensional Darcy Flow Model in Fractured Porous Media}
\label{sec_model}
\subsection{Geometry and Function Spaces}

Let $\Omega$ denote a bounded domain of $\R^d$, $d=2,3$ 
assumed to be polyhedral for $d=3$ and polygonal for $d=2$. 
To fix ideas the dimension will be fixed to $d=3$ when it needs to be specified, 
for instance in the naming of the geometrical objects or for the space discretization 
in the next section. The adaptations to the case $d=2$ are straightforward. \\

Let 
$
\overline \Gamma = \bigcup_{i\in I} \overline \Gamma_i
$  
and its interior $\Gamma = \overline \Gamma\setminus \partial\overline\Gamma$ 
denote the network of fractures $\Gamma_i\subset \Omega$, $i\in I$, such that each $\Gamma_i$ is 
a planar polygonal simply connected open domain included in a plane ${\cal P}_i$ of $\R^d$. 
It is assumed that the angles of $\Gamma_i$ 
are strictly smaller than $2\pi$, and that $\Gamma_i\cap\overline\Gamma_j=\emptyset$ for all $i\neq j$ .

For all $i\in I$, 
let us set $\Sigma_i = \partial\Gamma_i$, with $\n_{\Sigma_i}$ as unit vector in $\mathcal P_i$, normal to $\Sigma_i$ and outward to $\G_i$. Further $\Sigma_{i,j}= \Sigma_i\cap\Sigma_j$, $j\in I\sm\{i\}$, 
$\Sigma_{i,0} = \Sigma_i\cap\partial\Omega$, 
$\Sigma_{i,N} = \Sigma_i\setminus(\bigcup_{j\in I\sm\{i\}}\Sigma_{i,j}\cup \Sigma_{i,0})$, 
$\Sigma = \bigcup_{(i,j)\in I\times I, i\neq j} ( \Sigma_{i,j}\setminus\Sigma_{i,0} )$ 
and $\Sigma_0 = \bigcup_{i\in I} \Sigma_{i,0}$. 
It is assumed that $\Sigma_{i,0} = \overline\Gamma_i\cap\partial\Omega$. 

\begin{figure}[h!]
\begin{center}
\includegraphics[height=0.25\textwidth]{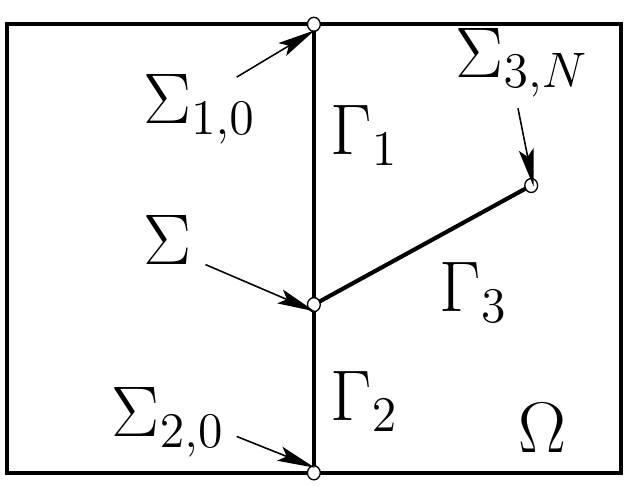}\hspace*{1cm}
\includegraphics[height=0.25\textwidth]{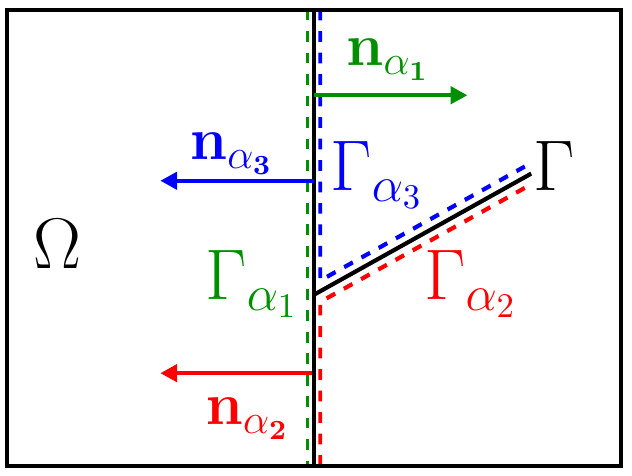}   
\caption{Example of a 2D domain $\Omega$ and 3 intersecting fractures $\Gamma_i,i=1,2,3$. 
We might define the fracture plane orientations by $\alpha^+(1) = \alpha_1, \alpha^-(1) = \alpha_3$ for $\Gamma_1$, 
$\alpha^+(2) = \alpha_1, \alpha^-(2) = \alpha_2$ for $\Gamma_2$, and $\alpha^+(3) = \alpha_3, \alpha^-(3) = \alpha_2$ for 
$\Gamma_3$.}
\label{fig_network}
\end{center}
\end{figure}

We will denote by $\d\tau({\bf x})$ the $d-1$ dimensional Lebesgue measure on $\Gamma$.
On the fracture network $\G$, we define the function space 
$
L^2(\Gamma) = \{v = (v_i)_{i\in I}, v_i\in L^2(\Gamma_i), i\in I\}, 
$
endowed with the norm $\|v\|_{L^2(\Gamma)} = (\sum_{i\in I} \|v_i\|^2_{L^2(\Gamma_i)})^{1\over 2}$ and its subspace $H^1(\Gamma)$ consisting of functions $v = (v_i)_{i\in I}$ such that $v_i\in H^1(\Gamma_i)$, $i\in I$ with continuous traces at the fracture intersections $\Sigma_{i,j}$, $j\in I\sm\{i\}$. The space $H^1(\Gamma)$ is 
endowed with the norm $\|v\|_{H^1(\Gamma)} = (\sum_{i\in I} \|v_i\|^2_{H^1(\Gamma_i)})^{1\over 2}$. We also define it's subspace with vanishing traces on $\Sigma_0$, which we denote by $H^1_{\Sigma_0}(\Gamma)$. 

On $\Omega\backslash\ov\G$, the gradient operator 
from $H^1(\Omega\backslash\ov\G)$ to $L^2(\O)^d$ is denoted by $\nabla$. On the fracture 
network $\G$, the tangential gradient, acting from 
$H^1(\Gamma)$ to $L^2(\Gamma)^{d-1}$, is denoted by $\nabla_\tau$, and such that 
$$
\nabla_\tau v = (\nabla_{\tau_i} v_i)_{i\in I},  
$$
where, for each $i\in I$, the tangential gradient $\nabla_{\tau_i}$ 
is defined from $H^1(\G_i)$ to $L^2(\G_i)^{d-1}$ by fixing a 
reference Cartesian coordinate system of the plane ${\cal P}_i$ containing $\G_i$. 
We also denote by $\div_{\tau_i}$ the divergence operator from 
$H_{\div}(\Gamma_i)$ to $L^2(\Gamma_i)$. \\

We assume that there exists a finite family $(\G_\alpha)_{\alpha\in\chi}$ such that for all $\alpha\in\chi$ holds:
$\G_\alpha\subset\G$ and there exists a lipschitz domain $\omega_\alpha\subset\OG$, such that $\G_\alpha = \del\omega_\alpha\cap\G$. 
For $\alpha\in\chi$ and an apropriate choice of $I_\alpha\subset I$ we assume that $\ov\G_\alpha = \bigcup_{i\in I_\alpha}\ov\G_i$. 
Furthermore should hold $\ov\G = \bigcup_{\alpha\in\chi}\ov\G_\alpha$. 
We also assume that each $\G_i\subset\G$ is contained in $\G_\alpha$ for exactly two $\alpha\in\chi$ and that we can define a unique mapping $i\longmapsto (\alpha^+(i),\alpha^-(i))$ from $I$ to $\chi\times\chi$, such that $\G_i\subset\G_{\alpha^+(i)}\cap\G_{\alpha^-(i)}$ and $\alpha^+(i)\neq\alpha^-(i)$ (cf. figure \ref{fig_network}). 
For all $i\in I$, $\alpha^\pm(i)$ defines the two sides of the fracture $\Gamma_i$ in $\Omega\setminus\overline\Gamma$ 
and we can introduce the corresponding unit normal vectors $n_{\alpha^\pm(i)}$ at $\Gamma_i$ 
outward to $\omega_{\alpha^\pm(i)}$, 
such that $\n_{\alpha^+(i)} + \n_{\alpha^-(i)} = 0$. We therefore obtain for $\alpha\in\chi$ and a.e. $\x\in\G_\alpha$ a unique 
unit normal vector $\bf n_\alpha(\x)$ outward to $\omega_\alpha$. 
A simple choice of $(\G_\alpha)_{\alpha\in\chi}$ is given by both sides of each fracture $i\in I$ but 
more general choices are also possible such as for example the one exhibited in figure \ref{fig_network}.\\

Then, for $\alpha\in\chi$, we can define the trace operator on $\G_\alpha$: 
$$
\gamma_{\alpha} : H^1(\Omega\setminus\overline\G) \rightarrow L^2(\G_\alpha),  
$$
and the normal trace operator on $\G_\alpha$ outward to the side $\alpha$: 
$$
\gamma_{\n,\alpha} : H_{\div} (\Omega\setminus\overline\G) \rightarrow \D'(\G_\alpha). 
$$

We now define the hybrid dimensional function spaces that 
will be used as variational spaces for the Darcy flow model 
in the next subsection: 
$$
V = H^1(\Omega\setminus \overline\G)\times H^1(\G),  
$$
and its subspace 
$$
V^0 = H^1_{\partial\Omega}(\O\setminus \overline\G)\times H^1_{\Sigma_0}(\G),
$$
where (with $\trace_{\del\O}\colon H^1(\O\backslash\ov\G)\rightarrow L^2(\del\O)$ denoting the trace operator on $\del\O$)
$$
H^1_{\partial\Omega}(\O\setminus \overline\G) = \{ v\in H^1(\O\backslash\ov\G)\mid \trace_{\del\O} v = 0\text{ on }\del\Omega\},
$$
as well as
$$
W = W_m\times W_f,
$$
where
\begin{align*}
W_m &= \left\{ \q_m \in H_{\div}(\Omega\setminus\overline\Gamma)\mid\gamma_{\n,\alpha}\q_m \in L^2(\G_\alpha) 
\text{ for all }\alpha\in\chi\right\}
\text{ and }\\
 W_f &= \{\q_f=(\q_{f,i})_{i\in I} \mid \q_{f,i} \in H_{\div}(\G_i) \mbox{ for all } i\in I\\
 &\text{ and }\sum_{i\in\G}\int_{\G_i}\(\grad_\tau v\cdot\q_{f,i} + v\cdot\div_{\tau_i}\q_{f,i}\) \d\tau(\x) = 0 \mbox{ for all } v\in H_{\Sigma_0}^1(\G) \}.
\end{align*}

On $V$, we define the positive semidefinite, symmetric bilinear form
\begin{align*}
( (u_m,u_f),(v_m,v_f))_V &= 
\dsp\int_{\O}\grad u_m \cdot\grad v_m \d\x
+\dsp\int_{\G}\grad_\tau u_f \cdot\grad_\tau v_f \d\tau(\x)\\
& + \dsp \sum_{\alpha\in\chi} \int_{\G_\alpha} (\gamma_{\alpha} u_m - u_f)(\gamma_{\alpha} v_m - v_f)d\tau(\x)
\end{align*}
for $(u_m,u_f), (v_m,v_f)\in V$, which induces the seminorm $|(v_m,v_f)|_{V}$.
Note that $(\cdot,\cdot)_V$ is a scalar product and $|\cdot|_V$ is a norm on $V^0$, denoted by $\|\cdot\|_{V^0}$ in the following.\\

We define for all $(\p_m,\p_f), (\q_m,\q_f)\in W$ the scalar product 
\begin{align*}
((\p_m, \p_f),(\q_m, \q_f))_W &= \dsp\int_{\O}\p_m \q_m \d\x + \dsp\int_{\O}\div \p_m \cdot\div \q_m \d\x\\
 &+ \dsp\int_{\G} \p_f \q_f \d\tau(\x) + \dsp\int_{\G}\div_\tau \p_f \cdot\div_\tau \q_f \d\tau(\x)\\
& + \dsp \sum_{\alpha\in\chi} \int_{\G_\alpha} (\gamma_{\n,\alpha} \p_m\cdot\gamma_{\n,\alpha} \q_m)d\tau(\x),
\end{align*}
which induces the norm $\|(\q_m,\q_f)\|_{W}$, and where we have used the notation $\div_{\tau} \p_f = \div_{\tau_i} \p_{f,i}$ on $\Gamma_i$ for all $i\in I$ and $\p_f = (\p_{f,i})_{i\in I} \in W_f$. \\

Using similar arguments as in the proof of \cite{Raviart}, example II.3.4, one can prove the following Poincar\'e type inequality.

\begin{proposition}\label{proppoincarecont}
The norm $\|\cdot\|_{V^0}$ satisfies the following inequality
\begin{equation}
\label{poincarecont}
\|v_m\|_{H^1(\Omega\setminus \overline\G)} + \|v_f\|_{H^1(\G)} \leq \mathcal C_P  \|(v_m,v_f)\|_{V^0},
\end{equation}
for all $(v_m,v_f)\in V^0$. 
\end{proposition}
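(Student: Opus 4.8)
The plan is to argue by contradiction, using a compactness argument in the spirit of the generalized Poincar\'e inequality (Peetre--Tartar / Deny--Lions), which is presumably the mechanism behind the cited example II.3.4 of \cite{Raviart}. Suppose no such constant exists. Then there is a sequence $(v_m^n,v_f^n)\in V^0$ normalized by $\|v_m^n\|_{H^1(\OG)}+\|v_f^n\|_{H^1(\G)}=1$ while $\|(v_m^n,v_f^n)\|_{V^0}\to 0$. The normalization makes the sequence bounded in $H^1(\OG)\times H^1(\G)$, so after invoking the Rellich compactness theorem on each Lipschitz component of $\OG$ (whose regularity is inherited from the domains $\omega_\alpha$) and on each $\G_i$, I would extract a subsequence such that $v_m^n\weakto v_m$ weakly in $H^1(\OG)$ and strongly in $L^2(\O)$, and $v_f^n\weakto v_f$ weakly in $H^1(\G)$ and strongly in $L^2(\G)$.

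Next I would identify the limit. Since $\|(v_m^n,v_f^n)\|_{V^0}\to 0$, each of the three nonnegative terms of the seminorm tends to zero; in particular $\grad v_m^n\to 0$ and $\grad_\tau v_f^n\to 0$ in $L^2$. Hence $\grad v_m=0$, so $v_m$ is constant on each connected component of $\OG$, and $\grad_\tau v_f=0$, so $v_f$ is constant on each connected component of the fracture network (the continuity of traces at the intersections $\Sigma$ built into $H^1(\G)$ forces the constants of intersecting fractures to agree). The vanishing of the coupling term, combined with the compactness of the trace operators $\gamma_\alpha$ from $H^1(\OG)$ into $L^2(\G_\alpha)$ and the strong $L^2$ convergence of $v_f^n$, yields $\gamma_\alpha v_m=v_f$ a.e. on each $\G_\alpha$. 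Finally the homogeneous boundary conditions pass to the limit by weak continuity of the trace maps and weak closedness of the defining constraints of $V^0$: $\trace_{\del\O}v_m=0$ on $\del\O$ and $v_f=0$ on $\Sigma_0$.

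The crux — and the step I expect to be the \emph{main obstacle} — is to deduce from these facts that $v_m\equiv 0$ and $v_f\equiv 0$. This is where the hybrid-dimensional coupling is essential: because $\OG$ is in general disconnected and many of its components need not touch $\del\O$, the matrix Dirichlet condition alone does not force $v_m$ to vanish. Instead I would propagate the boundary information through the relations $\gamma_\alpha v_m=v_f$ and through the connectivity of the fracture network together with its Dirichlet condition on $\Sigma_0$. Concretely, a matrix component meeting $\del\O$ carries the constant $0$; its bounding fracture faces then inherit the value $0$ via the coupling, this value propagates along each connected piece of $\G$ on which $v_f$ is constant, and it in turn fixes to $0$ the constants of all matrix components lying on the other side of those fractures. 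Using that $\O$ is connected, this chaining reaches every matrix component and every fracture. The delicate point to check here is that the adjacency relation between matrix components, fracture components, $\del\O$ and $\Sigma_0$ is genuinely connected, which should follow from the assumed structure of the family $(\G_\alpha)_{\alpha\in\chi}$ and the assumption $\Sigma_{i,0}=\ov\G_i\cap\del\O$.

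It then remains to reach the contradiction. Having shown $v_m=0$ and $v_f=0$, the strong $L^2$ convergences give $\|v_m^n\|_{L^2(\O)}\to 0$ and $\|v_f^n\|_{L^2(\G)}\to 0$, while the seminorm control already gives $\|\grad v_m^n\|_{L^2(\O)}\to 0$ and $\|\grad_\tau v_f^n\|_{L^2(\G)}\to 0$. Summing these shows $\|v_m^n\|_{H^1(\OG)}+\|v_f^n\|_{H^1(\G)}\to 0$, contradicting the normalization to $1$ and thereby establishing the existence of $\mathcal C_P$.
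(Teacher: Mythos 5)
Your proof is correct, and its skeleton is the same as the paper's: a contradiction argument with a normalized sequence, the compact embedding of $(V^0,\|\cdot\|_{H^1})$ into $L^2(\O)\times L^2(\G)$, and a contradiction with the normalization. The genuine difference is in how the limit is shown to vanish. The paper never identifies the structure of the limit: it observes that the sequence is Cauchy in $H^1$ (strong $L^2$ convergence of the functions plus strong convergence of the gradients to zero), deduces $\|v\|_{V^0}=\lim_\mu\|v_\mu\|_{V^0}=0$, and then simply invokes the fact --- asserted earlier in Section 2.1 without proof (``$|\cdot|_V$ is a norm on $V^0$'') --- that the $V^0$ seminorm is definite on $V^0$, so $v=0$. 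You instead prove that definiteness explicitly: you extract $\grad v_m=0$, $\grad_\tau v_f=0$, $\gamma_\alpha v_m = v_f$ on each $\G_\alpha$ (via compactness of the traces), and the homogeneous Dirichlet conditions, and then propagate the zero value through the adjacency graph of matrix components and fracture pieces using the connectivity of $\O$, the continuity of $v_f$ at fracture intersections, and the data on $\del\O$ and $\Sigma_0$. What your route buys is a self-contained argument that also substantiates the paper's unproven remark, which is exactly where the real content of the inequality lies (as you correctly flag); what the paper's route buys is brevity, at the cost of leaving the null-space identification implicit. One technical caveat you raise --- a matrix component whose closure meets $\del\O$ only in a set of zero surface measure --- causes no gap in your scheme, since such a component is then bounded by fractures and inherits its value by propagation rather than directly from the boundary condition.
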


\begin{proof}
We apply the ideas of the proof of \cite{Raviart}, example II.3.4 and assume that the statement of the proposition is not true. 
Then we can define a sequence $(v_l)_{l\in\N}$ in $V^0$, such that
\begin{equation}
\label{proofproppoincarecont1}
\|v_l\|_{H^1} = 1 \qquad\text{and}\qquad \|v_l\|_{V^0} < \frac{1}{l},
\end{equation}
where, for this proof, $\|\cdot\|_{H^1} = \|\cdot\|_{H^1(\Omega\setminus \overline\G)} + \|\cdot\|_{H^1(\G)}$. The imbedding
$$
(V^0,\|\cdot\|_{H^1}) \hookrightarrow \(L^2(\O)\times L^2(\G),
\|\cdot\|_{L^2(\O)} + \|\cdot\|_{L^2(\G)}\)
$$
is compact, provided that $\Omega\backslash\ov\G$ has the cone property (see \cite{Adams}, theorem 6.2). Thus, there is a subsequence $(v_\mu)_\mu$ of 
$(v_l)_{l\in\N}$ and  $v\in L^2(\O)\times L^2(\G)$, such that
$$
v_\mu\longrightarrow v \qquad\text{in }L^2(\O)\times L^2(\G).
$$
On the other hand it follows from (\ref{proofproppoincarecont1}) that
\begin{align*}
\grad v_{m_\mu}&\longrightarrow 0\qquad\text{in }L^2(\O)\\
\grad_\tau v_{f_\mu}&\longrightarrow 0\qquad\text{in }L^2(\G).
\end{align*}
Since $(V^0, \|\cdot\|_{H^1})$ is complete, we have
$$
v_\mu\longrightarrow v \qquad\text{in }V^0,
$$
with
$$
\|v\|_{V^0} = \lim_{\mu\to\infty}\|v_\mu\|_{V^0} = 0.
$$
Since $\|\cdot\|_{V^0}$ is a norm on $V^0$, we have $v = 0 \in V^0$, but $\|v\| = 1$, which is a contradiction.
\hfill\qed
\end{proof}

\begin{remark}
With the precedent proof it is readily seen that inequality
(\ref{poincarecont}) holds for all functions $v\in V$ whose trace vanishes on a subset of 
$\del(\Omega\backslash\ov\G)$ with positive surface measure. The requirement is that $v$ has to be in 
a closed subspace of $(V,\|\cdot\|_{H^1})$ for which $\|\cdot\|_{V^0}$ is a well defined norm.	
\end{remark}

The convergence analysis presented in section \ref{sec_VAGHFV} requires some results on the 
density of smooth subspaces of $V$ and $W$, which we state below.

\begin{definition}
\begin{enumerate}
\item $C^\infty_{\Omega}$ is defined as the subspace of functions in $C_b^\infty(\OG)$ vanishing on a neighbourhood 
of the boundary $\partial\Omega$, where $C_b^\infty(\OG)\subset C^\infty(\OG)$ is the 
set of functions $\varphi$, such that for all $\x\in\O$ there exists $r > 0$, such that for all 
connected components $\omega$ of $\{\x + \y\in\R^d\mid |\y| < r\}\cap(\OG)$ one has $\varphi\in C^\infty(\ov\omega)$.
\item $C^\infty_{\G} = \gamma_\Gamma (C_0^\infty(\O))$ is defined as the image of $C_0^\infty(\O)$ of the trace operator $\gamma_\Gamma\colon H_0^1(\O)\to L^2(\G)$.
\item $C^\infty_{W_m} = {C_b^\infty(\OG)}^d$.
\item $C_{W_f}^\infty = \{ \q_f = (\q_{f,i})_{i\in I}\mid \q_{f,i}\in {C^\infty(\ov\G_i)}^{d-1},\ \sum_{i\in I}\q_{f,i}\cdot\n_{\Sigma_i} = 0\text{ on }\Sigma,\ \q_{f,i}\cdot\n_{\Sigma_i} = 0\text{ on }\Sigma_{i,N},\ i\in I\}$.
\end{enumerate}
\end{definition}
Let us first state the following Lemma that will be used to prove the density of $C^\infty_{W_m}\times C^\infty_{W_f}$ 
in $W$.
\begin{lemma}
\label{lemmaweakderivatives}
Let $v_m\in L^2(\O),\ v_f\in  L^2(\G),\ G\in L^2(\O)^d,\ H\in L^2(\G)^{d-1}$ and 
$J_\alpha\in L^2(\G_\alpha),\ \alpha\in\chi$ such that 
\begin{align}
\label{eqlemmaweakderivatives}
\int_\O(G\cdot\q_m + v_m\div\q_m) \d\x + \int_\G (H\cdot\q_f + v_f\div_\tau\q_f) \d\tau(\x)
+ \sum_{\alpha\in\chi}\int_{\G_\alpha}\trace_{\n,\alpha}\q_m \d\tau(\x)
(J_\alpha - v_f) = 0
\end{align}
for all $(\q_m, \q_f)\in C^\infty_{W_m}\times C^\infty_{W_f}$. Then holds
$(v_m,v_f)\in V^0$, $(G,H) = (\grad v_m,\grad_\tau v_f)$ and $J_\alpha = v_f - \trace_\alpha v_m  \text{ for } \alpha\in\chi$.
\end{lemma}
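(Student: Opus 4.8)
The plan is to proceed in three stages: first identify $G$ and $H$ as the weak (tangential) gradients of $v_m$ and $v_f$ by testing against compactly supported fields, then integrate by parts to turn the identity \eqref{eqlemmaweakderivatives} into a pure boundary relation, and finally read off the trace conditions by prescribing the normal traces of the test fields on the various boundary pieces. To begin, I would restrict \eqref{eqlemmaweakderivatives} to fields $\q_m\in C_c^\infty(\OG)^d$ (with $\q_f=0$). Such fields have vanishing normal trace on every $\G_\alpha$, so all coupling terms drop and one is left with $\int_\O(G\cdot\q_m + v_m\div\q_m)\,\d\x = 0$; this says precisely that $\grad v_m = G$ in the distributional sense on $\OG$, hence $v_m\in H^1(\OG)$ with $\grad v_m = G$. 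Symmetrically, taking $\q_m=0$ and $\q_{f,i}$ supported in the interior of a single $\G_i$ (which is admissible in $C^\infty_{W_f}$ since all boundary traces vanish) yields $\grad_{\tau_i} v_{f,i} = H_i$, so $v_{f,i}\in H^1(\G_i)$ for each $i\in I$ and $\grad_\tau v_f = H$.

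With this regularity in hand I would substitute $G=\grad v_m$ and $H=\grad_\tau v_f$ back into \eqref{eqlemmaweakderivatives} and apply the generalized Green formulas that define $W_m$ and $W_f$: the normal trace $\gamma_{\n,\alpha}\q_m\in L^2(\G_\alpha)$ pairs with $\gamma_\alpha v_m$, while $\q_{f,i}\cdot\n_{\Sigma_i}$ pairs with the trace of $v_{f,i}$ on $\Sigma_i$. All volume integrals then cancel and the identity collapses to the boundary relation
\[
\int_{\del\O}(\gamma_{\del\O}v_m)\,(\q_m\cdot\n)\,\d s + \sum_{\alpha\in\chi}\int_{\G_\alpha}\gamma_{\n,\alpha}\q_m\,(\gamma_\alpha v_m + J_\alpha - v_f)\,\d\tau(\x) + \sum_{i\in I}\int_{\Sigma_i}(\gamma v_{f,i})\,(\q_{f,i}\cdot\n_{\Sigma_i})\,\d s = 0,
\]
valid for every $(\q_m,\q_f)\in C^\infty_{W_m}\times C^\infty_{W_f}$.

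From this single relation I would extract the three conclusions by localizing the test data. Setting $\q_f=0$ and choosing $\q_m$ whose normal trace is supported on $\del\O$ gives $\gamma_{\del\O}v_m=0$, i.e.\ $v_m\in H^1_{\del\O}(\OG)$; choosing instead $\q_m$ with normal trace supported on a single side $\G_\alpha$ (possible precisely because $C_b^\infty(\OG)$ permits independent smooth behaviour on the two sides of a fracture) forces $\gamma_\alpha v_m + J_\alpha - v_f = 0$, that is $J_\alpha = v_f - \gamma_\alpha v_m$. With these two facts the first two sums vanish and there remains $\sum_{i\in I}\int_{\Sigma_i}(\gamma v_{f,i})(\q_{f,i}\cdot\n_{\Sigma_i})\,\d s = 0$. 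Test fields localized near $\Sigma_{i,0}$, where no constraint is imposed on $\q_{f,i}\cdot\n_{\Sigma_i}$, give $v_f=0$ on $\Sigma_0$; on the immersed tips $\Sigma_{i,N}$ the built-in constraint $\q_{f,i}\cdot\n_{\Sigma_i}=0$ makes those contributions disappear automatically; and reorganizing the remaining integral over the intersection edges of $\Sigma$ and using the flux-balance constraint $\sum_i\q_{f,i}\cdot\n_{\Sigma_i}=0$ there, an elementary orthogonality argument (testing against all zero-sum families of normal traces) forces the traces $\gamma v_{f,i}$ of the fractures meeting along a common edge to coincide. This is exactly the matching condition defining $H^1(\G)$, so together with the vanishing on $\Sigma_0$ we obtain $v_f\in H^1_{\Sigma_0}(\G)$ and hence $(v_m,v_f)\in V^0$.

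The step I expect to be the main obstacle is the realization of the prescribed normal-trace data by genuine admissible test fields: one must show that the normal-trace maps are onto a set dense enough to run the localization arguments, that the traces on $\del\O$ and on the individual sides $\G_\alpha$ can be prescribed independently, and above all that along each intersection edge of $\Sigma$ every zero-sum family of smooth normal traces is attained by some $\q_f\in C^\infty_{W_f}$ compatible with $\q_{f,i}\cdot\n_{\Sigma_i}=0$ on $\Sigma_{i,N}$ and the global flux balance. This is a constructive extension argument resting on the Lipschitz regularity of the $\omega_\alpha$ and the planarity of the $\G_i$, and it is where the geometric hypotheses on the fracture network genuinely enter.
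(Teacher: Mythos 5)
Your proposal is correct and follows essentially the same route as the paper's proof: interior testing with compactly supported fields to identify $G=\grad v_m$ and $H=\grad_\tau v_f$, then integration by parts and localization of normal-trace data on $\del\O$, on each side $\G_\alpha$, on the intersection edges of $\Sigma$ (where the paper's pairwise choice $\trace_{\n_{\Sigma_i}}\s = h = -\trace_{\n_{\Sigma_j}}\s$ is exactly your zero-sum orthogonality argument), and on $\Sigma_0$. The surjectivity-of-normal-traces issue you flag as the main obstacle is likewise present in the paper, which simply asserts the existence of the required test fields $\q_m\in C^\infty_{W_m}$ and $\s\in C^\infty_{W_f}$ without constructing them.
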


\begin{proof}
Firstly, for all $\q_m\in C_0^\infty(\O\backslash\ov\G)^d$, we have
$$
\int_\O(G\cdot\q_m + v_m\div\q_m) \d\x = 0
$$
and therefore $v_m\in H^1(\O\backslash\ov\G)$ and $\grad v_m = G$.
\par For a.e. $\x\in\del\O$, there exists an open planar domain $\omega\ssubset\del\O\backslash\del\G$ containing $\x$ such that for all 
$f\in C_0^{\infty}(\omega)$ there exists $\q_m\in C_{W_m}^\infty$ with 
\begin{align*}
\trace_{\n_{\del\O}}\q_m &= \left\{
\begin{array}{l l}
f &\quad \text{on }\omega,\\
0 &\quad \text{on }\del\O\backslash\omega,
\end{array}\right.\\
\trace_{\n,\alpha} \q_m &= 0\quad\text{on }\G_\alpha,\ \alpha\in\chi,
\end{align*}
where $\trace_{\n_{\del\O}}$ denotes the normal trace operator on the boundary of $\O$. From \eqref{eqlemmaweakderivatives}, taking $\q_f = 0$, we obtain
$$
0 = \int_\O(\grad v_m\cdot\q_m + v_m\div\q_m) \d\x = \int_{\del\O}\trace_{\del\O}v_m\trace_{\n_{\del\O}}\q_m \d\tau(\x) = \int_{\omega}\trace_{\del\O}v_m f \d\tau(\x).
$$
where $\trace_{\del\O}$ denotes the trace operator on the boundary of $\O$.
We deduce $\trace_{\del\O}v_m = 0$ a.e. on $\del\O\backslash\del\G$. 
Hence $v_m\in H_{\del\O}^1(\O\backslash\ov\G)$.
\par Further, for a.e. $\x\in\G_\alpha$ there exists an open planar domain $\omega_\alpha\ssubset\G_\alpha$ containing $\x$ such that for 
all $g\in C_0^{\infty}(\omega_\alpha)$ there exists $\q_m\in C_{W_m}^\infty$ with 
\begin{align*}
\trace_{\n,\alpha}\q_m &= \left\{
\begin{array}{l l}
g &\quad \text{on }\omega_\alpha,\\
0 &\quad \text{on }\G_\alpha\backslash\omega_\alpha,
\end{array}\right.\\
\trace_{\n,\beta} \q_m &= 0\quad\text{on }\G_\beta,\text{ for }\beta\neq\alpha,\\
\trace_{\n_{\del\O}} \q_m &= 0\quad\text{on }\del\O.
\end{align*}
From \eqref{eqlemmaweakderivatives} we obtain
\begin{align*}
0 &= \int_\O(\grad v_m\cdot\q_m + v_m\div\q_m) \d\x + \sum_{\alpha\in\chi}\int_{\G_\alpha}\trace_{\n,\alpha}\q_m (J_\alpha - v_f) \d\tau(\x)  \\
 &= \int_{\G_\alpha}\trace_{\n,\alpha}\q_m (J_\alpha - v_f + \trace_\alpha v_m) \d\tau(\x) 
 = \int_{\omega_\alpha} g (J_\alpha - v_f + \trace_\alpha v_m) \d\tau(\x) .
\end{align*}
We deduce $J_\alpha = v_f - \trace_\alpha v_m$ a.e. on $\G_\alpha,\ \alpha\in\chi$. 
\par Next, for all $\q_f\in C_0^\infty(\G_i)^{d-1},\ i\in I$, we have from \eqref{eqlemmaweakderivatives}
$$
\int_{\G_i}(H\cdot\q_f + v_f\div\q_f) \d\tau(\x) = 0
$$
and therefore $v_f\restriction_{\G_i}\in H^1(\G_i)$ for $i\in I$ and 
$\grad_{\tau_i} v_f\restriction_{\G_i} = H\restriction_{\G_i}$.
\par Let $i,j\in I$, $i\neq j$. For a.e. $\x\in\Sigma_{i,j}\sm\Sigma_{i,0}$ there exists an open interval 
$c_{ij}\ssubset\Sigma_{i,j}\sm\Sigma_{i,0}$ containing $\x$ such that 
for all $h\in C_0^{\infty}(c_{ij})$ there exists $\s\in C_{W_f}^\infty$ with 
\begin{align*}
\trace_{\n_{\Sigma_i}}\s &= h = -\trace_{\n_{\Sigma_j}}\s\quad\text{on }c_{ij},\\
\trace_{\n_{\Sigma_{k}}}\s &= 0\quad\text{on }\Sigma_{k}\backslash c_{ij},\ k\in I.
\end{align*}
From \eqref{eqlemmaweakderivatives} we obtain
$$
0 = \int_\G(\grad_\tau v_f\cdot\s + v_f\div_\tau\s) \d\tau(\x) = \int_{c_{ij}}(\trace_{\Sigma_i}v_f - \trace_{\Sigma_j}v_f)\trace_{\n_{\Sigma_i}}\s \d\sigma(\x),
$$
$ \d\sigma(\x)$ denoting the $d-2$ dimensional Lebesgue measure on $\Sigma$. We deduce $\trace_{\Sigma_i}v_f = \trace_{\Sigma_j}v_f$ a.e. on $\Sigma_{i,j}\sm\Sigma_{i,0},\ i,j\in I, i\neq j$.
The proof of $\trace_{\Sigma_0}v_f = 0$ a.e. on $\Sigma_0$ goes analogously.
Hence $v_f\in H_{\Sigma_0}^1(\G)$.
\hfill\qed
\end{proof}

\begin{proposition}
$C^\infty_{\Omega}\times C^\infty_{\G}$ is dense in $V^0$.
\end{proposition}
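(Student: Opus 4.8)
The plan is to exploit the product structure of $V^0$ together with the Poincar\'e inequality of Proposition \ref{proppoincarecont}. First I would observe that on $V^0$ the norm $\|\cdot\|_{V^0}$ is equivalent to the sum $\|v_m\|_{H^1(\Omega\setminus\overline\Gamma)} + \|v_f\|_{H^1(\Gamma)}$: the bound $\|(v_m,v_f)\|_{V^0} \le C(\|v_m\|_{H^1(\Omega\setminus\overline\Gamma)} + \|v_f\|_{H^1(\Gamma)})$ follows from the continuity of the traces $\gamma_\alpha\colon H^1(\Omega\setminus\overline\Gamma)\to L^2(\Gamma_\alpha)$ applied to the coupling term, while the reverse inequality is exactly \eqref{poincarecont}. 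Since $C^\infty_{\Omega}\times C^\infty_{\Gamma}$ is a Cartesian product and the sum of the two $H^1$ norms is a product norm on $H^1_{\partial\Omega}(\Omega\setminus\overline\Gamma)\times H^1_{\Sigma_0}(\Gamma)$, the statement reduces to two independent density results, in which one approximates $v_m$ and $v_f$ separately: that $C^\infty_{\Omega}$ is dense in $H^1_{\partial\Omega}(\Omega\setminus\overline\Gamma)$ for $\|\cdot\|_{H^1(\Omega\setminus\overline\Gamma)}$, and that $C^\infty_{\Gamma}$ is dense in $H^1_{\Sigma_0}(\Gamma)$ for $\|\cdot\|_{H^1(\Gamma)}$.

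For the matrix factor, given $v_m \in H^1_{\partial\Omega}(\Omega\setminus\overline\Gamma)$ I would first cut off near $\partial\Omega$: multiplying by a cutoff $\eta_\epsilon$ equal to one outside an $\epsilon$-neighbourhood of $\partial\Omega$ and vanishing near it, the condition $\gamma_{\partial\Omega} v_m = 0$ (through a Hardy/Friedrichs estimate) guarantees $\eta_\epsilon v_m \to v_m$ in $H^1(\Omega\setminus\overline\Gamma)$. The resulting function is supported away from $\partial\Omega$, and I would then regularize it using a partition of unity subordinate to local models of $\Omega\setminus\overline\Gamma$, namely the interior, a smooth point of a face, an intersection edge of $\Sigma$, and a tip of $\Sigma_{i,N}$, mollifying independently on each local side of $\overline\Gamma$ so as never to average across a fracture. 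This keeps the approximation in $C_b^\infty(\Omega\setminus\overline\Gamma)$ with support still away from $\partial\Omega$, hence in $C^\infty_{\Omega}$.

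For the fracture factor I would proceed in two steps. First I approximate $v_f \in H^1_{\Sigma_0}(\Gamma)$ by functions that are smooth on each $\overline\Gamma_i$, continuous across the intersections $\Sigma$ and vanishing near $\Sigma_0$, via a network mollification that respects the gluing conditions at the edges. Then I realize each such smooth network function $g$ as the limit of traces $\gamma_\Gamma \phi$ of functions $\phi \in C_0^\infty(\Omega)$: the extension $\phi$ is built with a partition of unity, using on the interior of each face the normal extension $g_i\circ\pi_i$ times a cutoff transverse to the plane $\mathcal P_i$, so that $\gamma_\Gamma\phi = g_i$ there, while the vanishing of $g$ near $\Sigma_0\subset\partial\Omega$ allows $\phi$ to be taken with compact support in $\Omega$, hence $\gamma_\Gamma\phi\in C^\infty_{\Gamma}$.

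The main obstacle is the behaviour of both constructions at the lower-dimensional pieces of the network, the fracture intersections $\Sigma$ and the immersed tips $\Sigma_{i,N}$, where $\Omega\setminus\overline\Gamma$ fails to be Lipschitz and where $g$ is merely continuous, so that its tangential derivatives coming from adjacent faces need not agree. There the normal extensions from neighbouring faces disagree at first order and must be patched through the partition of unity while controlling the resulting $H^1(\Gamma)$ error. The decisive point is that these bad sets carry vanishing $(d-1)$-dimensional measure, so the patching error can be confined to shrinking neighbourhoods and driven to zero; this is precisely what allows the two elementary density statements to survive on the present non-smooth geometry.
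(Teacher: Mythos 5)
Your opening reduction is exactly the paper's: using the continuity of the traces $\gamma_\alpha$ and Proposition \ref{proppoincarecont}, the norm $\|\cdot\|_{V^0}$ is equivalent to the sum of the $H^1$ norms (the paper states it with the gradient seminorms, which is the same thing here), so the density claim splits into two independent statements, $C^\infty_{\O}$ dense in $H^1_{\del\O}(\OG)$ and $C^\infty_{\G}$ dense in $H^1_{\Sigma_0}(\G)$. After that you diverge: the paper treats the matrix statement as a classical result and, for the fracture statement, lifts the problem to $\tilde V^0 = \gamma_\G^{-1}(H^1_{\Sigma_0}(\G))\subset H_0^1(\O)$ and invokes Proposition 2 of \cite{GSDFN} (density of $C_0^\infty(\O)$ in $\tilde V^0$ for the graph norm), so that $C^\infty_\G = \gamma_\G(C_0^\infty(\O))$ inherits density by taking traces. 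You instead attempt direct constructions on $\OG$ and on the network $\G$.

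The direct route has a genuine gap, concentrated exactly where the paper chose to cite rather than construct. Your fracture argument hinges on its first step -- approximating $v_f\in H^1_{\Sigma_0}(\G)$ in $H^1(\G)$ by functions smooth on each $\ov\G_i$, continuous across $\Sigma$ and vanishing near $\Sigma_0$ -- and this ``network mollification respecting the gluing conditions'' is asserted, not constructed: mollifying face by face destroys the matching at the intersections, and restoring it is the whole difficulty. More seriously, the principle you declare decisive -- that the bad sets ($\Sigma$, $\Sigma_{i,N}$, $\Sigma_0$) have vanishing $(d-1)$-dimensional measure, so errors confined to shrinking neighbourhoods can be driven to zero -- is unsound in an $H^1$ setting. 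These sets have codimension one inside the two-dimensional network $\G$, hence positive capacity relative to $H^1(\G)$: a cutoff $\theta_\epsilon$ supported at distance $\epsilon$ from an edge satisfies $|\grad_\tau\theta_\epsilon|\sim\epsilon^{-1}$, and $\|\grad_\tau(\theta_\epsilon w)\|_{L^2(\G)}$ blows up like $\epsilon^{-1/2}$ unless $w$ itself is $O(\epsilon)$ near the edge. You correctly identify this saving mechanism for your extension step (the normal extensions from adjacent faces agree at order zero on the edge, so the discrepancy is $O(\epsilon)$ and the patching cost $O(\sqrt\epsilon)$), but no analogous mechanism is supplied for the network mollification step, nor for the matrix regularization at the tips $\Sigma_{i,N}$, where ``mollifying independently on each local side'' is unavailable because a ball around a tip meets $\OG$ in a single, non-Lipschitz component. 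So the proposal establishes the reduction and the easy patching estimate, but the two hard density claims -- the ones the paper outsources to the classical literature and to \cite{GSDFN} -- are left resting on an invalid measure-theoretic argument.
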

\begin{proof}
Firstly, note that we have
\begin{align*}
{1\over \sqrt{2}}\( \|\grad u_m\|_{{L^2(\O)}^d} + \|\grad_\tau u_f\|_{{L^2(\G)}^{d-1}} \)&\leq \|(u_m,u_f)\|_{V^0}\\
&\leq C(\Omega,\G)\cdot \( \|\grad u_m\|_{{L^2(\O)}^d} + \|\grad_\tau u_f\|_{{L^2(\G)}^{d-1}} \),
\end{align*}
i.e. $\|\cdot\|_{V^0}$ is equivalent to the standard norm 
$\|\grad \cdot\|_{{L^2(\O)}^d} + \|\grad_\tau \cdot\|_{{L^2(\G)}^{d-1}}$
on $V^0$. The density of $C_\O^\infty$ in $H_{\del\O}^1(\O\sm\ov\G)$ being a classical result, we are concerned to prove the density of $C_\G^\infty$ in $H_{\Sigma_0}^1(\G)$ in the following.
Since $H_{\Sigma_0}^1(\G)\subset \gamma_\Gamma(H_0^1(\O))$, we can define 
$\tilde V^0 = \gamma_\G^{-1}(H_{\Sigma_0}^1(\G))\subset H_0^1(\O)$. 
In Proposition 2 of \cite{GSDFN} it is shown that $C_0^\infty(\O)$ is dense in $( \tilde{V}^0, \|\grad\cdot\|_{L^2(\O)^d} + \|\grad_\tau\gamma_\G\cdot\|_{L^2(\G)^{d-1}} )$. Hence $C_\G^\infty$ is dense in $( H_{\Sigma_0}^1(\G), \|\grad_\tau\cdot\|_{L^2(\G)^{d-1}} )$.
\hfill\qed
\end{proof}

\begin{proposition}
$C^\infty_{W_m}\times C^\infty_{W_f}$ is dense in $W$.
\end{proposition}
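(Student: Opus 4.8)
The plan is to argue by duality in the Hilbert space $(W,(\cdot,\cdot)_W)$: since $W$ is complete, it suffices to show that the orthogonal complement of $C^\infty_{W_m}\times C^\infty_{W_f}$ reduces to $\{0\}$. So I would start from an element $(\p_m,\p_f)\in W$ satisfying $((\p_m,\p_f),(\q_m,\q_f))_W = 0$ for every $(\q_m,\q_f)\in C^\infty_{W_m}\times C^\infty_{W_f}$, and aim at proving $(\p_m,\p_f)=0$. Writing out the scalar product, this orthogonality relation reads
\[
\begin{aligned}
&\int_\O\(\p_m\cdot\q_m + \div\p_m\,\div\q_m\)\d\x
+ \int_\G\(\p_f\cdot\q_f + \div_\tau\p_f\,\div_\tau\q_f\)\d\tau(\x)\\
&\qquad + \sum_{\alpha\in\chi}\int_{\G_\alpha}\gamma_{\n,\alpha}\p_m\,\gamma_{\n,\alpha}\q_m\,\d\tau(\x) = 0.
\end{aligned}
\]

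The crucial observation is that this is exactly the hypothesis \eqref{eqlemmaweakderivatives} of Lemma \ref{lemmaweakderivatives}, for the choice $G=\p_m$, $v_m=\div\p_m$, $H=\p_f$, $v_f=\div_\tau\p_f$ and $J_\alpha = \div_\tau\p_f + \gamma_{\n,\alpha}\p_m$, so that $J_\alpha - v_f = \gamma_{\n,\alpha}\p_m$. All these data lie in the required $L^2$ spaces, since $\p_m\in H_{\div}(\OG)$ has $\gamma_{\n,\alpha}\p_m\in L^2(\G_\alpha)$ and $\p_{f,i}\in H_{\div}(\G_i)$. Applying the lemma then yields $(w_m,w_f):=(\div\p_m,\div_\tau\p_f)\in V^0$, together with $\p_m=\grad w_m$, $\p_f=\grad_\tau w_f$, and, from $J_\alpha = v_f - \gamma_\alpha v_m$, the trace identity $\gamma_{\n,\alpha}\p_m = -\gamma_\alpha w_m$ on each $\G_\alpha$.

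It then remains to show that $w_m$ and $w_f$ have vanishing gradients. For the matrix contribution, I would integrate $\p_m=\grad w_m$ against $\grad w_m$ by parts on $\OG$: since $w_m\in H^1_{\del\O}(\OG)$ its trace vanishes on $\del\O$, while on the two-sided fracture faces we use $\gamma_{\n,\alpha}\p_m=-\gamma_\alpha w_m$ and $\div\p_m=w_m$, which gives
\[
\int_\O|\grad w_m|^2\,\d\x = -\int_\O w_m^2\,\d\x - \sum_{\alpha\in\chi}\int_{\G_\alpha}(\gamma_\alpha w_m)^2\,\d\tau(\x).
\]
As the left-hand side is nonnegative and the right-hand side nonpositive, both vanish, whence $\p_m=\grad w_m=0$. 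For the fracture contribution, I would instead use the defining constraint of $W_f$ with the admissible test function $w_f\in H^1_{\Sigma_0}(\G)$, namely $\int_\G(\grad_\tau w_f\cdot\p_f + w_f\,\div_\tau\p_f)\,\d\tau(\x)=0$; together with $\p_f=\grad_\tau w_f$ and $\div_\tau\p_f=w_f$ this reduces to $\int_\G|\grad_\tau w_f|^2\,\d\tau(\x) = -\int_\G w_f^2\,\d\tau(\x)\le 0$, forcing $\p_f=\grad_\tau w_f=0$. Hence $(\p_m,\p_f)=0$ and the density follows.

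I expect the one genuinely delicate point to be the justification of the integration-by-parts (Green) formula on the cut domain $\OG$ for $\p_m\in H_{\div}(\OG)$ tested against $w_m\in H^1(\OG)$, and in particular the splitting of the resulting boundary duality pairing over $\del\O$ and the two-sided fracture faces $\G_\alpha$; this is legitimate precisely because membership of $\p_m$ in $W_m$ provides the $L^2$ normal traces $\gamma_{\n,\alpha}\p_m$ there. By contrast, the transmission conditions at fracture intersections $\Sigma_{i,j}$ and at the immersed tips $\Sigma_{i,N}$, which would otherwise obstruct the fracture integration by parts, are already encoded in the weak constraint defining $W_f$ and require no separate treatment once $w_f$ is taken as the test function.
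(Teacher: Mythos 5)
Your proof is correct and is essentially the paper's own argument in Hilbert-space (orthogonal-complement) form: the paper likewise represents a functional annihilating $C^\infty_{W_m}\times C^\infty_{W_f}$ (treating $W_m$ and $W_f$ separately), applies Lemma \ref{lemmaweakderivatives} to identify the representing data as a gradient/trace family of an $H^1$ function, and then kills the functional by exactly your two closing devices — Green's formula on $\OG$ for the matrix part and the weak constraint defining $W_f$ for the fracture part. Your sign-definite identities amount precisely to computing $\|(\p_m,\p_f)\|_W^2=0$, i.e.\ the paper's conclusion tested against $(\p_m,\p_f)$ itself, so the two proofs coincide up to this repackaging, and both rest on the same integration-by-parts fact on the cut domain that you flag as the delicate step.
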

\begin{proof}
Since $W_f$ is a closed subspace of the Hilbert space $\prod_{i\in I} H_{\div}(\G_i)$, any linear form $l\in W_f'$ is the 
restriction to $W_f$ of a linear form still denoted by $l$ in $\prod_{i\in I} H_{\div}(\G_i)'$. Then, for some $f\in L^2(\G)$ and $\bm g\in {L^2(\G)}^{d-1}$ holds
$$
<l,\q_f> = \sum_{i\in I}\int_{\G_i}\(\bm g\cdot\q_f + f\cdot\div_\tau\q_f\) \d\tau(\x),
$$
for all $\q_f\in W_f$. Let us assume now that $<l,\bm\vp> = 0$ for all $\bm \vp\in C_{W_f}^\infty$. 
Corresponding to Lemma \ref{lemmaweakderivatives} holds $f\in H_{\Sigma_0}^1(\G)$.
From the definition of $W_f$ we conclude that $<l,\bm\q_f> = 0$ for all $\q_f\in W_f$. 
\par Let now $l\in W_m'$. Then there exist $f\in L^2(\O),\ \bm g\in {L^2(\O)}^d$ and $h_\alpha\in L^2(\G_\alpha)\ (\alpha\in\chi)$, such that
$$
<l,\q_m> = \int_{\O}\(\bm g \cdot\q_m + f\cdot\div\q_m\) \d\x + \sum_{\alpha\in\chi}\int_{\G_\alpha} h_\alpha\trace_{\n,\alpha}\q_m \d\tau(\x),
$$
for all $\q_m\in W_m$. Furthermore, let us assume that $<l,\bm\vp> = 0$ for all $\bm \vp\in C_{W_m}^\infty$. From Lemma \ref{lemmaweakderivatives} we deduce that $f\in H_{\del\O}^1(\Omega\setminus\overline\Gamma)$, that $\bm g = \grad f$ and that $h_\alpha = \trace_\alpha f\ (\alpha\in\chi)$. Using this, we conclude, again by the rule of partial integration, that $<l,\bm\q_m> = 0$ for all $\q_m\in W_m$.
\hfill\qed
\end{proof}

\subsection{Single Phase Darcy Flow Model}

\subsubsection{Strong formulation}

In the matrix domain $\Omega\setminus\overline\G$, let us denote 
by $\Lambda_m\in L^{\infty}(\Omega)^{d\times d}$ 
the permeability tensor such that 
there exist  $\overline\lambda_m\geq \underline\lambda_m > 0$ 
with 
$$
\underline\lambda_m|\zeta|^2 \leq (\Lambda_m(\x)\zeta,\zeta) \leq \overline\lambda_m|\zeta|^2 
\mbox{ for all } \zeta \in \R^d, \x\in \Omega,
$$
Analogously, in the fracture network $\G$, we denote by $\Lambda_f\in L^{\infty}(\Gamma)^{(d-1)\times (d-1)}$ the 
tangential permeability tensor, and assume that there exist $\overline\lambda_f\geq \underline\lambda_f > 0$, such that
holds
$$
\underline\lambda_f|\zeta|^2 \leq (\Lambda_f(\x)\zeta,\zeta) \leq \overline\lambda_f|\zeta|^2 
\mbox{ for all } \zeta \in \R^{d-1}, \x\in\Gamma. 
$$
At the fracture network $\G$, we introduce the orthonormal system \linebreak
$(\bm\tau_1(\x),\bm\tau_2(\x),\n(\x))$, defined a.e. on $\G$. 
Inside the fractures, the normal direction is assumed to be a permeability principal direction. The normal permeability 
$\lambda_{f,\n}  \in L^{\infty}(\Gamma)$ is such that 
$\underline \lambda_{f,\n} \leq  \lambda_{f,\n}(\x) \leq  \overline \lambda_{f,\n}$ for a.e. $\x\in \Gamma$ with  
$0 < \underline \lambda_{f,\n} \leq  \overline \lambda_{f,\n}$. 
We also denote by $d_f \in L^\infty(\G)$ 
the width of the fractures assumed to be such that there exist 
${\overline d}_f\geq {\underline d}_f > 0$ with 
$$
{\underline d}_f \leq d_f(\x) \leq {\overline d}_f
$$
for a.e. $\x\in\Gamma$. 
Let us define the weighted 
Lebesgue $d-1$ dimensional measure on $\G$ 
by  $d\tau_f(\x) = d_f(\x) \d\tau(\x)$. 
We consider the source terms $h_m\in L^2(\Omega)$ (resp. $h_f\in L^2(\G)$) 
in the matrix domain $\Omega\setminus\overline\G$ (resp. in the fracture network $\G$).
The half normal transmissibility in the fracture network is denoted by 
$T_f = \frac{2\lambda_{f,\n}}{d_f}$. \\

Given $\xi\in ({1\over 2},1]$, the PDEs model writes: find $(u_m,u_f)\in V^0$, 
$(\q_m, \q_f) \in W$ such that: \\
\begin{eqnarray}
\label{modeleCont}
\left\{\begin{array}{r@{\,\,}c@{\,\,}ll}
\div(\q_m) &=& h_m &\mbox{ on } \Omega\setminus \overline \G,\\
\q_m &=& -\Lambda_m \nabla u_m &\mbox{ on } \Omega\setminus \overline \G,\\  
\gamma_{\n,\alpha^\pm(i)} \q_m &=& \frac{T_f}{2\xi - 1} (\xi \gamma_{\alpha^\pm(i)} u_m + (1-\xi) \gamma_{\alpha^\mp(i)}u_m - u_f) &\mbox{ on } \G_i,\ i\in I,\\
\div_{\tau_i}(\q_f)  - \gamma_{\n,\alpha^+(i)}\q_m - \gamma_{\n,\alpha^-(i)}\q_m &=& d_f h_f &\mbox{ on } \G_i, i\in I\\
\q_f &=& -d_f ~\Lambda_f \nabla_\tau u_f  &\mbox{ on } \G,  
\end{array}\right.
\end{eqnarray}

\subsubsection{Weak formulation}

The hybrid dimensional weak formulation amounts to 
find $(u_m,u_f)\in V^0$  satisfying the following 
variational equality for all $(v_m,v_f) \in V^0$: 
\begin{eqnarray}
\label{formVar}
\left.\begin{array}{r@{\,\,}c@{\,\,}ll}
&&\dsp \int_\Omega  \Lambda_m\nabla u_m \cdot\nabla v_m \d\x  
+ \dsp\int_\G  \Lambda_f  \nabla_\tau u_f \cdot\nabla_\tau v_f \d\tau_f(\x)  \\
&& +\dsp \sum_{i\in I}\int_{\G_i} {T_f\over 2\xi-1} \sum_{(\alpha,\beta)\in\{(\alpha^\pm(i),\alpha^\mp(i))\}}
\( \xi \gamma_{\alpha} u_m + (1-\xi) \gamma_{\beta} u_m - u_f\)\(\gamma_{\alpha} v_m - v_f\) \d\tau(\x) \\
&&\dsp - \int_\O h_m v_m \d\x 
- \int_\G h_f v_f \d\tau_f(\x) = 0. 
\end{array}\right.
\end{eqnarray}
The following proposition states the well posedness of the variational formulation \eqref{formVar}. 
\begin{proposition}
For all $\xi\in ({1\over 2},1]$, the variational problem \eqref{formVar} has a unique solution $(u_m,u_f)\in V^0$ 
which satisfies the a priori estimate 
$$
\|(u_m,u_f)\|_{V^0} \leq C \( \|h_m\|_{L^2(\O)} + \|h_f\|_{L^2(\G)}\), 
$$
with $C$ depending only on $\xi$, 
$\mathcal C_P$, $\ul\lambda_m$, $\ul\lambda_f$, $\ul d_f$, $\overline d_f$, and $\ul \lambda_{f,n}$.
In addition
$(\q_m,\q_f) =   \linebreak - (\Lambda_m \nabla u_m, d_f \Lambda_f \nabla_\tau u_f)$ belongs to $W$. 
\end{proposition}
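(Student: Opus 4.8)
The natural plan is to recognise \eqref{formVar} as a symmetric variational problem $a\bigl((u_m,u_f),(v_m,v_f)\bigr)=L(v_m,v_f)$ on $V^0$ and to invoke Lax--Milgram. Here $a$ is the sum of the first three integral terms in \eqref{formVar} and $L(v_m,v_f)=\dsp\int_\O h_m v_m\,\d\x+\int_\G h_f v_f\,\d\tau_f(\x)$. By Proposition \ref{proppoincarecont} the norm $\|\cdot\|_{V^0}$ is equivalent to the full $H^1$ norm on $V^0$, so $\bigl(V^0,\|\cdot\|_{V^0}\bigr)$ is a Hilbert space; the same inequality, combined with Cauchy--Schwarz and $d_f\le\ov d_f$, gives continuity of $L$ with $\|L\|\le\mathcal C_P\max(1,\ov d_f)\bigl(\|h_m\|_{L^2(\O)}+\|h_f\|_{L^2(\G)}\bigr)$. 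Continuity of $a$ is routine using $\ov\lambda_m$, $\ov\lambda_f$, $\ov d_f$, the bound $T_f\le 2\ov\lambda_{f,\n}/\ul d_f$ and the decomposition $\xi\gamma_\alpha u_m+(1-\xi)\gamma_\beta u_m-u_f=\xi(\gamma_\alpha u_m-u_f)+(1-\xi)(\gamma_\beta u_m-u_f)$, which expresses each coupling factor through the quantities appearing in $\|\cdot\|_{V^0}$.

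The heart of the matter is coercivity. Fixing a fracture $\G_i$ with sides $\alpha^+=\alpha^+(i)$, $\alpha^-=\alpha^-(i)$ and abbreviating $a=\gamma_{\alpha^+}v_m$, $b=\gamma_{\alpha^-}v_m$, $c=v_f$, the coupling integrand on $\G_i$, stripped of the factor $T_f/(2\xi-1)$, is
\begin{align*}
Q &= \bigl(\xi a+(1-\xi)b-c\bigr)(a-c)+\bigl(\xi b+(1-\xi)a-c\bigr)(b-c)\\
  &= (a-c)^2+(b-c)^2-(1-\xi)(a-b)^2,
\end{align*}
the second equality being a direct expansion. Writing $u=a-c$, $w=b-c$ one has $Q=\xi(u^2+w^2)+2(1-\xi)uw$, a quadratic form whose smallest eigenvalue is $2\xi-1$; hence $Q\ge(2\xi-1)\bigl[(a-c)^2+(b-c)^2\bigr]$, and it is exactly the strict positivity of $2\xi-1$, i.e. $\xi>\tfrac12$, that makes the form coercive and forces the exclusion of $\xi=\tfrac12$. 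Multiplying by $T_f/(2\xi-1)$ cancels the denominator, leaving the coupling term bounded below by $(2\ul\lambda_{f,\n}/\ov d_f)\sum_{\alpha\in\chi}\int_{\G_\alpha}(\gamma_\alpha v_m-v_f)^2\,\d\tau(\x)$. Together with $\int_\O\Lambda_m\grad v_m\cdot\grad v_m\,\d\x\ge\ul\lambda_m\|\grad v_m\|^2_{L^2(\O)^d}$ and $\int_\G\Lambda_f\grad_\tau v_f\cdot\grad_\tau v_f\,\d\tau_f(\x)\ge\ul\lambda_f\ul d_f\|\grad_\tau v_f\|^2_{L^2(\G)^{d-1}}$ this gives $a\bigl((v_m,v_f),(v_m,v_f)\bigr)\ge\alpha_0\|(v_m,v_f)\|_{V^0}^2$ with $\alpha_0=\min\bigl(\ul\lambda_m,\ul\lambda_f\ul d_f,2\ul\lambda_{f,\n}/\ov d_f\bigr)$. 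Lax--Milgram then yields a unique $(u_m,u_f)\in V^0$ and the estimate with $C=\|L\|/\alpha_0$, whose dependence reproduces the constants listed in the statement.

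It remains to prove $(\q_m,\q_f)=-(\Lambda_m\grad u_m,d_f\Lambda_f\grad_\tau u_f)\in W$. Testing \eqref{formVar} with $v_f=0$ and $v_m\in C_0^\infty(\OG)$ (for which $\gamma_\alpha v_m=0$) yields the weak identity $\div\q_m=h_m\in L^2(\O)$, so $\q_m\in H_{\div}(\OG)$. Next, testing with $v_f=0$ and general $v_m\in H^1_{\del\O}(\OG)$ and applying Green's formula on $\OG$ --- the $\del\O$ boundary term vanishing --- identifies the normal traces as $\gamma_{\n,\alpha^\pm(i)}\q_m=\frac{T_f}{2\xi-1}\bigl(\xi\gamma_{\alpha^\pm(i)}u_m+(1-\xi)\gamma_{\alpha^\mp(i)}u_m-u_f\bigr)$, which belong to $L^2(\G_\alpha)$; hence $\q_m\in W_m$. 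Finally, testing with $v_m=0$ and $v_f\in H^1_{\Sigma_0}(\G)$ and inserting these normal traces turns the coupling term into $-\sum_{i\in I}\int_{\G_i}\bigl(\gamma_{\n,\alpha^+(i)}\q_m+\gamma_{\n,\alpha^-(i)}\q_m\bigr)v_f\,\d\tau(\x)$; choosing $v_f\in C_0^\infty(\G_i)$ recovers $\div_{\tau_i}\q_{f,i}=d_f h_f+\gamma_{\n,\alpha^+(i)}\q_m+\gamma_{\n,\alpha^-(i)}\q_m\in L^2(\G_i)$, so $\q_{f,i}\in H_{\div}(\G_i)$, while the same relation read for arbitrary $v_f\in H^1_{\Sigma_0}(\G)$ is precisely the orthogonality constraint defining $W_f$. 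Thus $(\q_m,\q_f)\in W$. The one genuinely delicate point is the coercivity estimate; the membership in $W$ is bookkeeping with Green's formula and the uniform data bounds.
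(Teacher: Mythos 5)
Your proof is correct and follows essentially the same route as the paper: Lax--Milgram on $V^0$, with coercivity coming from exactly the quadratic-form bound $Q\ge(2\xi-1)\bigl[(a-c)^2+(b-c)^2\bigr]$ that the paper encodes in its displayed inequality (there written in the difference variables $\gamma_\alpha v_m-v_f$, and with an evident typo: the middle term should carry the factor $\frac{1}{2\xi-1}$ for the stated two-sided bound to hold). The only difference is one of detail --- the paper compresses continuity, the Hilbert-space structure of $(V^0,\|\cdot\|_{V^0})$, and the membership $(\q_m,\q_f)\in W$ into ``the Lax--Milgram Theorem applies,'' whereas you spell these out (including the identification of the normal traces and the $W_f$ orthogonality constraint), which is a faithful expansion rather than a different argument.
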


\begin{proof}
Using that for all $\xi\in ({1\over 2},1]$ and for all $(a,b)\in \R^2$ one has 
$$
a^2 + b^2 \leq (\xi a + (1-\xi)b)a + (\xi b + (1-\xi)a)b  
\leq {1\over 2\xi-1}(a^2 + b^2),
$$
the Lax-Milgram Theorem applies, which ensures the statement of the proposition.
\hfill\qed
\end{proof}

\section{Gradient Discretization of the Hybrid Dimensional Model}
\label{sec_GS}
\subsection{Gradient Scheme Framework}

A gradient discretization $\D$ of hybrid dimensional Darcy flow models is defined by a vector space of 
degrees of freedom $X_{\D} = X_{\D_m}\times X_{\D_f}$, its subspace satisfying ad hoc homogeneous 
boundary conditions  $X^0_{\D} = X^0_{\D_m}\times X^0_{\D_f}$, 
and the following gradient and reconstruction operators: 
\begin{itemize}
\item Gradient operator on the matrix domain: 
$\nabla_{\D_m} : X_{\D_m} \rightarrow L^2(\O)^d$ 
\item Gradient operator on the fracture network: $\nabla_{\D_f} : X_{\D_f} \rightarrow L^2(\G)^{d-1}$
\item A function reconstruction operator  on the matrix domain: \\
$\Pi_{{\D_m}} : X_{{\D_m}} \rightarrow L^2(\O)$
\item Two function reconstruction operators on the fracture network: \\
$\Pi_{{\D_f}} : X_{{\D_f}} \rightarrow L^2(\G)$ 
and $\wt \Pi_{{\D_f}} : X_{{\D_f}} \rightarrow L^2(\G)$
\item Reconstruction operators of the trace on $\G_\alpha$ for $\alpha\in\chi$:  \\
$\Pi^\alpha_{\D_m} : X_{{\D_m}} \rightarrow L^2(\G_\alpha)$.
\end{itemize}

The space  $X_{\D}$ is endowed with the seminorm 
\begin{align*}
\|(v_{\D_m},v_{\D_f})\|_{\D} &= \dsp \(\|\nabla_{\D_m} v_{\D_m}\|^2_{L^2(\O)^d} + \|\nabla_{\D_f} v_{\D_f}\|^2_{L^2(\G)^{d-1}} 
 + \dsp \sum_{\alpha\in\chi} \| \Pi_{\D_m}^\alpha v_{\D_m} - \wt \Pi_{\D_f} v_{\D_f} \|_{L^2(\G_\alpha)}^2\)^{1\over 2},
\end{align*}
which is assumed to define a norm on $X_{\D}^0$. \\

The following properties of gradient discretizations are crucial for the convergence analysis of the corresponding numerical schemes:\\

\noindent {\bf Coercivity}: Let $\D$ be a gradient discretization and
$$
\mathcal C_\D = \max_{0\neq(v_{\D_m},v_{\D_f})\in X_\D^0}\frac{\|\Pi_{\D_m} v_{\D_m}\|_{L^2(\O)} + \|\Pi_{\D_f} v_{\D_f}\|_{L^2(\G)}}{\|(v_{\D_m},v_{\D_f})\|_{\D}}. 
$$
A sequence $(\D^l)_{l\in\N}$ of gradient discretizations is said to be coercive, if there exists $\overline{\mathcal{C}}_P > 0$ such that
$\mathcal C_{\D^l}\leq \overline{\mathcal C}_P$ for all $l\in\N$.\\

\noindent {\bf Consistency}: Let $\D$ be a gradient discretization. For $u = (u_m,u_f)\in V^0$ and $v_\D = (v_{\D_m},v_{\D_f})\in X_{\D}^0$ let us define 
$$
\left.\begin{array}{r@{\,\,}c@{\,\,}ll}
s(v_\D,u) &=&\|\nabla_{\D_m} v_{\D_m} -\nabla u_m\|_{L^2(\O)^d} + \|\nabla_{\D_f} v_{\D_f} -\nabla_\tau u_f\|_{L^2(\G)^{d-1}} \\
&+& \|\Pi_{\D_m} v_{\D_m} -u_m\|_{L^2(\O)} + \|\Pi_{\D_f} v_{\D_f} - u_f\|_{L^2(\G)} \\
&+&  \|\wt \Pi_{\D_f} v_{\D_f} - u_f\|_{L^2(\G)}  + 
\sum_{\alpha\in\chi}\|\Pi_{\D_m}^\alpha v_{\D_m} -\gamma_\alpha u_m\|_{L^2(\G_\alpha)}. 
\end{array}\right.
$$
and ${\cal S}_\D(u) = \min_{v_{\D}\in X_{\D}^0} s(v_\D, u)$. 
A sequence $(\D^l)_{l\in\N}$ of gradient discretizations is said to be consistent, if for all $u = (u_m,u_f)\in V^0$ holds
$$
\lim_{l \rightarrow \infty} {\cal S}_{\D^l}(u) = 0. 
$$

\noindent {\bf Limit Conformity}: Let $\D$ be a gradient discretization. For all $\q = (\q_m, \q_f) \in W,\ v_\D = (v_{\D_m},v_{\D_f})$ we define 
$$
\left.\begin{array}{r@{\,\,}c@{\,\,}ll}
w(v_\D,\q) &=& \dsp \int_\O \(\nabla_{\D_m} v_{\D_m} \cdot \q_m + (\Pi_{\D_m} v_{\D_m}) \div\q_m \) \d\x \\
&+& \dsp \int_{\G} \( \nabla_{\D_f} v_{\D_f} \cdot \q_f 
+ (\Pi_{\D_f} v_{\D_f}) \div_\tau\q_f \) \d\tau(\x) \\
&+& \dsp \sum_{\alpha\in\chi}\int_{\G_\alpha} \gamma_{\n,\alpha} \q_m \( \wt \Pi_{\D_f} v_{\D_f} - \Pi_{\D_f} v_{\D_f} - \Pi_{\D_m}^\alpha v_{\D_m}\) \d\tau(\x)
\end{array}\right.
$$
and ${\cal W}_\D(\q) = \max_{0\neq v_{\D}\in X_{\D}^0}\frac{1}{\|v_\D\|_\D} | w(v_\D,\q)|$.
A sequence $(\D^l)_{l\in\N}$ of gradient discretizations is said to be limit conforming, if for all $\q = (\q_m, \q_f) \in W$ holds
$$
\lim_{l \rightarrow \infty} {\cal W}_{\D^l}(\q) = 0. 
$$

\begin{lemma}
\label{lemmakonstantin}
Let $(\D^l)_{l\in\N} = (X_{\D^l}^0, \Pi_{\D_m^l}, \Pi_{\D_f^l}, \wt\Pi_{\D_f^l}, (\Pi_{\D_m^l}^\alpha)_{\alpha\in\chi}, \grad_{\D_m^l}, \grad_{\D_f^l})_{l\in\N}$ and\\
$(\ov\D^l)_{l\in\N} = (X_{\D^l}^0, \ov\Pi_{\D_m^l}, \ov\Pi_{\D_f^l}, \wt{\ov\Pi}_{\D_f^l}, (\ov\Pi_{\D_m^l}^\alpha)_{\alpha\in\chi}, \grad_{\D_m^l}, \grad_{\D_f^l})_{l\in\N}$ be two sequences of gradient discretisations of \eqref{formVar} and let us assume that $(\D^l)_{l\in\N}$ is coercive, consistent and limit conforming.
Let us furthermore assume that the sequence $(\zeta_{\D^l,\ov\D^l})_{l\in\N}$, defined by
\begin{align*}
\zeta_{\D^l,\ov\D^l} &:= \max_{0\neq v_{\D^l}\in X_{\D^l}^0} \(
\frac{1}{\|v_{\D^l} \|_{\D^l}}\cdot\(\|\Pi_{\D_m^l} v_{\D_m^l} - \ov\Pi_{\D_m^l} v_{\D_m^l}\|_{L^2(\O)} 
+ \|\Pi_{\D_f^l} v_{\D_f^l} - \ov\Pi_{\D_f^l} v_{\D_f^l}\|_{L^2(\G)} \\
&+  \|\wt \Pi_{\D_f^l} v_{\D_f^l} - \wt {\ov\Pi}_{\D_f^l} v_{\D_f^l}\|_{L^2(\G)}  
+ \sum_{\alpha\in\chi}\|\Pi_{\D_m^l}^\alpha v_{\D_m^l} -\ov\Pi_{\D_m^l}^\alpha v_{\D_m^l}\|_{L^2(\G_\alpha)}\)\),
\end{align*}
satisfies 
\begin{align}
\begin{aligned}
\label{eqlemmakonstantin0}
\lim_{l \to \infty}\zeta_{\D^l,\ov\D^l} = 0
\end{aligned}
\end{align}
 and that there is a constant $C\in\R$ independent of $l$ such that
\begin{align}
\label{eqlemmakonstantin1}
\sum_{\alpha\in\chi} \|\Pi_{\D_m^l}^\alpha v_{\D_m^l} - \wt \Pi_{\D_f^l} v_{\D_f^l} \|_{L^2(\G_\alpha)}
\leq C\cdot \sum_{\alpha\in\chi} \| \ov\Pi_{\D_m^l}^\alpha v_{\D_m^l} - \wt{\ov\Pi}_{\D_f^l} v_{\D_f^l} \|_{L^2(\G_\alpha)}
\end{align}
for all $v_{\D^l}\in X^0_{\D^l},\ l\in\N$. Then $(\ov\D^l)_{l\in\N}$ is coercive, consistent and limit conforming.
\end{lemma}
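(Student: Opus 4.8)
The plan is to transfer each of the three properties from $(\D^l)_{l\in\N}$ to $(\ov\D^l)_{l\in\N}$, exploiting that the two families share the space $X^0_{\D^l}$ and the gradient operators $\grad_{\D_m^l},\grad_{\D_f^l}$; only the reconstruction operators, and hence the trace-jump part of the discrete norm, differ. The backbone is an $l$-uniform one-sided comparison of the two norms supplied by \eqref{eqlemmakonstantin1}. Since the gradient contributions to $\|\cdot\|_{\D^l}$ and $\|\cdot\|_{\ov\D^l}$ coincide, only the terms $\sum_{\alpha}\|\Pi_{\D_m^l}^\alpha v_{\D_m^l}-\wt\Pi_{\D_f^l} v_{\D_f^l}\|^2_{L^2(\G_\alpha)}$ and their barred analogue need comparing; as $\chi$ is finite the $\ell^1$ and $\ell^2$ sums over $\alpha$ are equivalent, so \eqref{eqlemmakonstantin1} upgrades to
\[
\Big(\sum_{\alpha\in\chi}\|\Pi_{\D_m^l}^\alpha v_{\D_m^l}-\wt\Pi_{\D_f^l} v_{\D_f^l}\|^2_{L^2(\G_\alpha)}\Big)^{1\over 2}\le C'\Big(\sum_{\alpha\in\chi}\|\ov\Pi_{\D_m^l}^\alpha v_{\D_m^l}-\wt{\ov\Pi}_{\D_f^l} v_{\D_f^l}\|^2_{L^2(\G_\alpha)}\Big)^{1\over 2},
\]
and, adding the common gradient terms, yields a constant $C''$ independent of $l$ with $\|v_{\D^l}\|_{\D^l}\le C''\|v_{\D^l}\|_{\ov\D^l}$ for all $v_{\D^l}\in X^0_{\D^l}$. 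This inequality is what allows the replacement of $\D^l$-norms by $\ov\D^l$-norms in the denominators below. Throughout I use that, by definition of $\zeta_{\D^l,\ov\D^l}$, every single reconstruction difference (e.g. $\|\Pi_{\D_m^l} v_{\D_m^l}-\ov\Pi_{\D_m^l} v_{\D_m^l}\|$) is bounded by $\zeta_{\D^l,\ov\D^l}\|v_{\D^l}\|_{\D^l}$.

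\textbf{Coercivity.} By the triangle inequality and the $\zeta$-bound, $\|\ov\Pi_{\D_m^l} v_{\D_m^l}\|_{L^2(\O)}+\|\ov\Pi_{\D_f^l} v_{\D_f^l}\|_{L^2(\G)}\le(\mathcal C_{\D^l}+\zeta_{\D^l,\ov\D^l})\|v_{\D^l}\|_{\D^l}$. Using $\mathcal C_{\D^l}\le\ov{\mathcal C}_P$, the boundedness of $(\zeta_{\D^l,\ov\D^l})_l$, and the norm comparison, the right-hand side is bounded by a uniform constant times $\|v_{\D^l}\|_{\ov\D^l}$, giving a uniform bound on $\mathcal C_{\ov\D^l}$. \textbf{Consistency.} Fix $u=(u_m,u_f)\in V^0$ and, for each $l$, choose a near-minimizer $v_{\D^l}$ of $s(\cdot,u)$ with $s(v_{\D^l},u)\le{\cal S}_{\D^l}(u)+\tfrac1l\to 0$. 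Comparing the functional $\ov s(\cdot,u)$ of $\ov\D^l$ with $s(\cdot,u)$ term by term (the gradient terms coincide, and each reconstruction term differs by at most the corresponding $\Pi$-difference, by the reverse triangle inequality) gives $\ov s(v_{\D^l},u)\le s(v_{\D^l},u)+C\,\zeta_{\D^l,\ov\D^l}\|v_{\D^l}\|_{\D^l}$. Since $s(v_{\D^l},u)\to 0$ controls each gradient and reconstruction term of $v_{\D^l}$, the norm $\|v_{\D^l}\|_{\D^l}$ stays bounded (its trace-jump part being controlled through $\gamma_\alpha u_m-u_f$), so ${\cal S}_{\ov\D^l}(u)\le\ov s(v_{\D^l},u)\to 0$.

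\textbf{Limit conformity.} Fix $\q=(\q_m,\q_f)\in W$. The difference $\ov w(v_{\D^l},\q)-w(v_{\D^l},\q)$ involves only reconstruction differences, each paired against $\div\q_m$, $\div_\tau\q_f$, or $\gamma_{\n,\alpha}\q_m$; by Cauchy--Schwarz and the $\zeta$-bound it is at most $C(\q)\,\zeta_{\D^l,\ov\D^l}\|v_{\D^l}\|_{\D^l}$, with $C(\q)$ collecting $\|\div\q_m\|_{L^2(\O)}$, $\|\div_\tau\q_f\|_{L^2(\G)}$ and $\sum_\alpha\|\gamma_{\n,\alpha}\q_m\|_{L^2(\G_\alpha)}$, all finite since $\q\in W$. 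Dividing by $\|v_{\D^l}\|_{\ov\D^l}$ and applying the norm comparison to both the $w$-term and the remainder yields ${\cal W}_{\ov\D^l}(\q)\le C''\,{\cal W}_{\D^l}(\q)+C''C(\q)\,\zeta_{\D^l,\ov\D^l}$, which tends to $0$.

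The only genuinely delicate point is the norm comparison. Because $\zeta$ measures the reconstruction differences relative to $\|\cdot\|_{\D^l}$, closing each of the three estimates requires bounding $\|\cdot\|_{\D^l}$ by $\|\cdot\|_{\ov\D^l}$, and this is precisely---and solely---the content of \eqref{eqlemmakonstantin1}: without it the trace-jump part of $\|\cdot\|_{\ov\D^l}$ could degenerate relative to that of $\|\cdot\|_{\D^l}$ and blow up the constants. The rest is routine triangle-inequality and Cauchy--Schwarz bookkeeping relying only on the finiteness of $\chi$ and on $\zeta_{\D^l,\ov\D^l}\to 0$.
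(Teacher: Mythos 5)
Your proof is correct and follows essentially the same route as the paper's own proof: derive the uniform norm comparison $\|v_\D\|_{\D}\leq C''\|v_\D\|_{\ov\D}$ from \eqref{eqlemmakonstantin1}, then transfer coercivity by the triangle inequality and the $\zeta$-bound, consistency via a (near-)minimizer whose $\D$-norm is bounded through the data $u$, and limit conformity by estimating $w_{\ov\D}-w_\D$ against $\|\div\q_m\|_{L^2(\O)}$, $\|\div_\tau\q_f\|_{L^2(\G)}$ and $\sum_\alpha\|\gamma_{\n,\alpha}\q_m\|_{L^2(\G_\alpha)}$. The only differences are cosmetic: you spell out the $\ell^1$/$\ell^2$ equivalence over the finite set $\chi$ in the norm comparison (which the paper leaves implicit) and use a near-minimizer where the paper takes an exact one.
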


\begin{proof}

\noindent\textit{Coercivity: } $(\ov\D^l)_{l\in\N}$ is coercive, since for all $l\in\N$ we have (with $\D = \D^l, \ov\D = \ov\D^l$)
\begin{align*}
\|\ov\Pi_{\D_m} v_{\D_m}\|_{L^2(\O)} + \|\ov\Pi_{\D_f} v_{\D_f}\|_{L^2(\G)}
\leq (\zeta_{\D,\ov\D} + \mathcal C_\D)\|v_{\D}\|_\D
\leq \max(1,C) (\zeta_{\D,\ov\D} + \mathcal C_\D)\|v_{\D}\|_{\ov\D}
\end{align*}
and since $\max(1,C)\cdot(\zeta_{\D^l,\ov\D^l} + \mathcal C_{\D^l})$ is uniformly bounded. In the last inequality we have used that 
$\|v_\D\|_\D\leq\max(1,C)\|v_\D\|_{\ov\D}$, which follows from \eqref{eqlemmakonstantin1}. \\

\noindent\textit{Consistency: } Let $l\in\N$ be fixed and $\D = \D^l, \ov\D = \ov\D^l$. We first choose, for a given $u = (u_m,u_f)\in V^0$, a $\ul v_\D\in X_\D^0$, such that $s_\D(\ul v_\D, u) = {\cal S}_\D(u)$.
Using the inequality
\begin{align*}
s_{\ov\D}(v_\D, u)&\leq s_{\D}(v_\D, u) + \|\Pi_{\D_m} v_{\D_m} - \ov\Pi_{\D_m} v_{\D_m}\|_{L^2(\O)} 
+ \|\Pi_{\D_f} v_{\D_f} - \ov\Pi_{\D_f} v_{\D_f}\|_{L^2(\G)} \\
&+  \|\wt \Pi_{\D_f} v_{\D_f} - \wt {\ov\Pi}_{\D_f} v_{\D_f}\|_{L^2(\G)}  
+ \sum_{\alpha\in\chi}\|\Pi_{\D_m}^\alpha v_{\D_m} -\ov\Pi_{\D_m}^\alpha v_{\D_m}\|_{L^2(\G_\alpha)},
\end{align*}
which holds for all $v_\D\in X_\D$, we obtain
$$
{\cal S}_{\ov\D}(u)\leq{\cal S}_{\D}(u) + \zeta_{\D,\ov\D}\|\ul v_\D\|_\D.
$$
Moreover
$$
\|\ul v_\D\|_\D\leq {\cal S}_\D(u) + \|\grad u_m\|_{L^2(\O)^d} + \|\grad_\tau u_f\|_{L^2(\G)^{d-1}} 
+ \sum_{\alpha\in\chi} \| (\gamma_{\alpha} u_m - u_f)\|_{L^2(_{\G_\alpha})},
$$
which implies that $\|\ul v_{\D^l}\|_{\D^l}$ is uniformly bouded and therefore ${\cal S}_{\ov\D^l}(u)\to 0 $ as $l\to\infty$.

\noindent\textit{Limit Conformity: } Let again $l\in\N$ be fixed and $\D = \D^l, \ov\D = \ov\D^l$. For given $\q = (\q_m,\q_f)\in W$ and $v_\D\in X_\D^0$ we calculate
\begin{align*}
w_{\ov\D}(v_\D,\q) &\leq w_\D(v_\D,\q) + \|\Pi_{\D_m} v_{\D_m} - \ov\Pi_{\D_m} v_{\D_m}\|_{L^2(\O)} 
\cdot\|\div\q_m\|_{L^2(\O)}\\
&+ \|\Pi_{\D_f} v_{\D_f} - \ov\Pi_{\D_f} v_{\D_f}\|_{L^2(\G)} \cdot\|\div_\tau\q_f\|_{L^2(\G)}
 +  \sum_{\alpha\in\chi}\( \|\Pi_{\D_f} v_{\D_f} - {\ov\Pi}_{\D_f} v_{\D_f}\|_{L^2(\G_\alpha)}  \\
&+ \|\wt \Pi_{\D_f} v_{\D_f} - \wt {\ov\Pi}_{\D_f} v_{\D_f}\|_{L^2(\G_\alpha)}  
+ \|\Pi_{\D_m}^\alpha v_{\D_m} -\ov\Pi_{\D_m}^\alpha v_{\D_m}\|_{L^2(\G_\alpha)} \)\cdot\|\trace_{\n,\alpha}\q_m\|_{L^2(\G_\alpha)}\\
&\leq w_\D(v_\D, \q) + \zeta_{\D,\ov\D}\cdot\|v_\D\|_\D\cdot\( \|\div\q_m\|_{L^2(\O)} 
 + \|\div_\tau\q_f\|_{L^2(\G)} + \sum_{\alpha\in\chi} \|\trace_{\n,\alpha}\q_m\|_{L^2(\G_\alpha)} \).
\end{align*}
Taking \eqref{eqlemmakonstantin1} into account, we derive
$$
{\cal W}_{\ov\D}(\q_m, \q_f)\leq \max(1,C)\cdot\sup_{0\neq v_\D\in X_\D}\frac{w_{\ov\D}(v_\D,\q)}{\|v_\D\|_\D} 
\leq\max(1,C)\cdot( {\cal W}_{\D}(\q_m, \q_f) + \zeta_{\D,\ov\D}\|\q\|_W ).
$$
Therefore ${\cal W}_{\ov\D^l}(\q_m, \q_f)$ tends to zero as $l$ goes to infinity.
\hfill\qed
\end{proof}

\begin{proposition}\label{theoremlimitregularity}\emph{(Regularity at the Limit)}
Let $(\D^l)_{l\in \N}$ be a coercive and limit conforming sequence of gradient discretizations 
and let $(v_{\D_m^l},v_{\D_f^l})_{l\in \N}$ be a uniformly bounded sequence in 
$X^0_{\D^l}$. Then, there exist $(v_m,v_f)\in V^0$ and a subsequence still denoted by $(v_{\D_m^l},v_{\D_f^l})_{l\in \N}$ 
such that 
$$
\left\{\begin{array}{r@{\,\,}c@{\,\,}l}
&&\Pi_{\D_m^l}v_{\D_m^l} \rightharpoonup v_m \quad\mbox{ in } L^2(\O),\\
&&\nabla_{\D_m} v_{\D_m^l} \rightharpoonup \nabla v_m \quad\mbox{ in } L^2(\O)^d,\\
&&\Pi_{\D_f^l}v_{\D_f^l} \rightharpoonup v_f \quad\mbox{ in } L^2(\G),\\
&&\nabla_{\D_f} v_{\D_f^l} \rightharpoonup \nabla_\tau v_f \quad\mbox{ in } L^2(\G)^{d-1},\\
&&\wt \Pi_{\D_f} v_{\D_f^l} -\Pi^\alpha_{\D_m} v_{\D_m^l}  \rightharpoonup v_f - \gamma_\alpha v_m\quad\mbox{ in } L^2(\G_\alpha),\text{ for all }\alpha\in\chi.\\
\end{array}\right.
$$
\end{proposition}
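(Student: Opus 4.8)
The plan is to exploit the uniform bound in the $\|\cdot\|_{\D^l}$ seminorm together with coercivity to extract weakly convergent subsequences of all the reconstructed quantities, and then to identify their limits via limit conformity and Lemma~\ref{lemmaweakderivatives}.

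First I would observe that, since $(v_{\D_m^l},v_{\D_f^l})_{l\in\N}$ is uniformly bounded in $X^0_{\D^l}$, the very definition of the seminorm $\|\cdot\|_{\D^l}$ supplies uniform $L^2$ bounds on $\nabla_{\D_m^l}v_{\D_m^l}$, on $\nabla_{\D_f^l}v_{\D_f^l}$, and on $\Pi^\alpha_{\D_m^l}v_{\D_m^l}-\wt\Pi_{\D_f^l}v_{\D_f^l}$ for each $\alpha\in\chi$; coercivity, i.e. the uniform bound $\mathcal C_{\D^l}\le\overline{\mathcal C}_P$, additionally bounds $\Pi_{\D_m^l}v_{\D_m^l}$ in $L^2(\O)$ and $\Pi_{\D_f^l}v_{\D_f^l}$ in $L^2(\G)$. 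By weak sequential compactness of bounded sets in the Hilbert spaces $L^2(\O)$, $L^2(\O)^d$, $L^2(\G)$, $L^2(\G)^{d-1}$ and $L^2(\G_\alpha)$, I can pass to a single subsequence along which $\Pi_{\D_m^l}v_{\D_m^l}\weakto v_m$, $\Pi_{\D_f^l}v_{\D_f^l}\weakto v_f$, $\nabla_{\D_m^l}v_{\D_m^l}\weakto G$, $\nabla_{\D_f^l}v_{\D_f^l}\weakto H$, and $\wt\Pi_{\D_f^l}v_{\D_f^l}-\Pi^\alpha_{\D_m^l}v_{\D_m^l}\weakto J_\alpha$ for some limits $v_m,v_f,G,H,(J_\alpha)_{\alpha\in\chi}$; it then only remains to identify these limits.

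The identification uses limit conformity. Fixing any $\q=(\q_m,\q_f)\in W$, the estimate $|w(v_{\D^l},\q)|\le\mathcal W_{\D^l}(\q)\,\|v_{\D^l}\|_{\D^l}$, together with $\mathcal W_{\D^l}(\q)\to0$ and the uniform bound on $\|v_{\D^l}\|_{\D^l}$, forces $w(v_{\D^l},\q)\to0$. On the other hand, every factor multiplying a reconstruction in the definition of $w$ — namely $\q_m$, $\div\q_m$, $\q_f$, $\div_\tau\q_f$ and the $\gamma_{\n,\alpha}\q_m$ — is a fixed $L^2$ function, precisely because $\q\in W$, so I can pass to the limit term by term using the weak convergences above. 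Writing $\wt\Pi_{\D_f^l}v_{\D_f^l}-\Pi_{\D_f^l}v_{\D_f^l}-\Pi^\alpha_{\D_m^l}v_{\D_m^l}=(\wt\Pi_{\D_f^l}v_{\D_f^l}-\Pi^\alpha_{\D_m^l}v_{\D_m^l})-\Pi_{\D_f^l}v_{\D_f^l}$ and using that the weak limit of $\Pi_{\D_f^l}v_{\D_f^l}$ in $L^2(\G)$ restricts to the weak limit $v_f$ in each $L^2(\G_\alpha)$ (test against functions extended by zero outside $\G_\alpha$), the limit identity becomes
\[
\int_\O(G\cdot\q_m+v_m\div\q_m)\,\d\x+\int_\G(H\cdot\q_f+v_f\div_\tau\q_f)\,\d\tau(\x)+\sum_{\alpha\in\chi}\int_{\G_\alpha}\gamma_{\n,\alpha}\q_m\,(J_\alpha-v_f)\,\d\tau(\x)=0.
\]

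Restricting this identity to the smooth subspace $C^\infty_{W_m}\times C^\infty_{W_f}\subset W$, it is exactly the hypothesis \eqref{eqlemmaweakderivatives} of Lemma~\ref{lemmaweakderivatives}. That lemma then yields $(v_m,v_f)\in V^0$, the identifications $G=\nabla v_m$ and $H=\nabla_\tau v_f$, and $J_\alpha=v_f-\gamma_\alpha v_m$ for all $\alpha\in\chi$, which are precisely the five claimed weak convergences. I expect the only genuinely delicate point to be the bookkeeping that recombines the three separate weak limits — that of $\Pi_{\D_f^l}v_{\D_f^l}$, that of $\wt\Pi_{\D_f^l}v_{\D_f^l}-\Pi^\alpha_{\D_m^l}v_{\D_m^l}$, and their restrictions to $\G_\alpha$ — into the single combination $J_\alpha-v_f$ demanded by Lemma~\ref{lemmaweakderivatives}; the rest is the standard coercivity/compactness/limit-conformity triad, with the passage to the weak limit resting on the $L^2$-regularity of the test data built into the definition of $W$.
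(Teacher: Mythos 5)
Your proposal is correct and follows essentially the same route as the paper's proof: uniform $L^2$ bounds from the discrete seminorm plus coercivity, weak compactness to extract a common subsequence with limits $v_m,v_f,G,H,J_\alpha$, passage to the limit in $w(v_{\D^l},\q)$ via limit conformity, and identification of the limits through Lemma \ref{lemmaweakderivatives}. The only (harmless) difference is presentational: you pass to the limit for arbitrary $\q\in W$ and then restrict to $C^\infty_{W_m}\times C^\infty_{W_f}$, and you spell out the recombination $\wt\Pi_{\D_f}v_{\D_f}-\Pi_{\D_f}v_{\D_f}-\Pi^\alpha_{\D_m}v_{\D_m}\rightharpoonup J_\alpha-v_f$ on each $\G_\alpha$, which the paper leaves implicit.
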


\begin{proof}
By definition of the norm of $X_{\D^l}^0$ and by coercivity, $\Pi_{\D_m^l}v_{\D_m^l},\ \Pi_{\D_f^l}v_{\D_f^l},\ \nabla_{\D_m} v_{\D_m^l},\ \nabla_{\D_f} v_{\D_f^l}$ and $(\Pi^\alpha_{\D_m} u_{\D_m^l} - \wt \Pi_{\D_f} u_{\D_f^l}),\ \alpha\in\chi$,
are uniformly bounded in $L^2$ (for $l\to\infty$). Therefore there exist 
$v_m\in L^2(\O),\ v_f\in  L^2(\G),\ G\in L^2(\O)^d,\ H\in L^2(\G)^{d-1}$ and 
$J_\alpha\in L^2(\G_\alpha),\ \alpha\in\chi$, and a subsequence still denoted by $(v_{\D_m^l},v_{\D_f^l})_{l\in \N}$ 
such that 
$$
\begin{array}{r@{\,\,}c@{\,\,}l}
&&\Pi_{\D_m^l}v_{\D_m^l} \rightharpoonup v_m \quad\mbox{ in } L^2(\O),\\
&&\nabla_{\D_m} v_{\D_m^l} \rightharpoonup G \quad\mbox{ in } L^2(\O)^d,\\
&&\Pi_{\D_f^l}v_{\D_f^l} \rightharpoonup v_f \quad\mbox{ in } L^2(\G),\\
&&\nabla_{\D_f} v_{\D_f^l} \rightharpoonup H \quad\mbox{ in } L^2(\G)^{d-1},\\
&&\wt \Pi_{\D_f} v_{\D_f^l} - \Pi^\alpha_{\D_m} v_{\D_m^l} \rightharpoonup J_\alpha\quad\mbox{ in } L^2(\G_\alpha),
\text{ for }\alpha\in\chi.\\
\end{array}
$$
Using limit conformity we obtain (by letting $l\to\infty$)
\begin{align}
\label{prooftheoremlimitregularity_1}
\int_\O(G\cdot\q_m + v_m\div\q_m) \d\x + \int_\G (H\cdot\q_f + v_f\div_\tau\q_f) \d\tau(\x)
+ \sum_{\alpha\in\chi}\int_{\G_\alpha}\trace_{\n,\alpha}\q_m
(J_\alpha - v_f) \d\tau(\x) = 0
\end{align}
for all $(\q_m, \q_f)\in C^\infty_{W_m}\times C^\infty_{W_f}$.
The statement of the proposition follows now from Lemma \ref{lemmaweakderivatives}.
\hfill\qed
\end{proof}

\begin{corollary}
\label{cortheoremlimitregularity}
Let $(\D^l)_{l\in \N}$ be a sequence of gradient discretizations, assumed to be limit conforming against regular test functions $(\q_m, \q_f)\in C^\infty_{W_m}\times C^\infty_{W_f}$ and let $(v_{\D_m^l},v_{\D_f^l})_{l\in \N}$ be a uniformly bounded sequence in 
$X^0_{\D^l}$, such that $\Pi_{\D_m^l}v_{\D_m^l}$ and $\Pi_{\D_f^l}v_{\D_f^l}$ are uniformly bounded in $L^2$ (for $l\rightarrow\infty$). Then holds the conclusion of Proposition \ref{theoremlimitregularity}.
\end{corollary}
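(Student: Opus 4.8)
The plan is to reproduce the argument of Proposition \ref{theoremlimitregularity} almost verbatim, after observing that its proof uses the coercivity hypothesis and the full limit conformity only through two specific consequences, each of which is now supplied directly. Recall that that proof proceeds in three stages: extraction of weak limits along a subsequence, passage to the limit in the limit-conformity identity to obtain \eqref{prooftheoremlimitregularity_1}, and a final application of Lemma \ref{lemmaweakderivatives}. I would check that all three stages survive under the relaxed hypotheses.

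First I would secure the weak limits. By the very definition of $\|\cdot\|_{\D}$, the uniform bound on $\|(v_{\D_m^l},v_{\D_f^l})\|_{\D^l}$ already controls $\nabla_{\D_m^l} v_{\D_m^l}$, $\nabla_{\D_f^l} v_{\D_f^l}$ and each $\Pi^\alpha_{\D_m^l} v_{\D_m^l} - \wt\Pi_{\D_f^l} v_{\D_f^l}$ in the appropriate $L^2$ space; this part never needed coercivity. The $L^2$-boundedness of $\Pi_{\D_m^l} v_{\D_m^l}$ and $\Pi_{\D_f^l} v_{\D_f^l}$, which in the Proposition was extracted from coercivity, is now an explicit assumption. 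Hence, exactly as before, a subsequence yields $v_m\in L^2(\O)$, $v_f\in L^2(\G)$, $G\in L^2(\O)^d$, $H\in L^2(\G)^{d-1}$ and $J_\alpha\in L^2(\G_\alpha)$ with the stated weak convergences.

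The decisive observation is that identity \eqref{prooftheoremlimitregularity_1} is tested only against $(\q_m,\q_f)\in C^\infty_{W_m}\times C^\infty_{W_f}$, so limit conformity against regular test functions is all that is required. For a fixed such pair I would bound $|w(v_{\D^l},\q)|\leq {\cal W}_{\D^l}(\q)\,\|v_{\D^l}\|_{\D^l}$; the right-hand side tends to zero since ${\cal W}_{\D^l}(\q)\to 0$ by hypothesis while $\|v_{\D^l}\|_{\D^l}$ stays uniformly bounded. Simultaneously, every term of $w(v_{\D^l},\q)$ passes to its weak limit: the volume and fracture integrals pair the weakly convergent reconstructions and gradients against the fixed $L^2$ functions $\q_m,\div\q_m,\q_f,\div_\tau\q_f$, and the interface integrals pair the weakly convergent quantity against the fixed datum $\gamma_{\n,\alpha}\q_m\in L^2(\G_\alpha)$. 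Equating the two limits produces \eqref{prooftheoremlimitregularity_1}, after which Lemma \ref{lemmaweakderivatives} delivers $(v_m,v_f)\in V^0$, $(G,H)=(\nabla v_m,\nabla_\tau v_f)$ and $J_\alpha=v_f-\gamma_\alpha v_m$, i.e. the full conclusion of Proposition \ref{theoremlimitregularity}.

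The only step needing a little care — and the mildest of obstacles — is the bookkeeping inside the interface integrals, where the integrand $\wt\Pi_{\D_f^l} v_{\D_f^l}-\Pi_{\D_f^l} v_{\D_f^l}-\Pi^\alpha_{\D_m^l} v_{\D_m^l}$ must be regrouped as $(\wt\Pi_{\D_f^l} v_{\D_f^l}-\Pi^\alpha_{\D_m^l} v_{\D_m^l})-\Pi_{\D_f^l} v_{\D_f^l}$ so that its weak limit is correctly identified as $J_\alpha-v_f$, giving the interface contribution $\int_{\G_\alpha}\gamma_{\n,\alpha}\q_m\,(J_\alpha-v_f)\,\d\tau(\x)$; everything else is a direct transcription of the Proposition's proof.
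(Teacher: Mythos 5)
Your proposal is correct and is precisely the argument the paper intends: the corollary carries no separate proof because, as you observe, in the proof of Proposition \ref{theoremlimitregularity} coercivity is used only to bound $\Pi_{\D_m^l}v_{\D_m^l}$ and $\Pi_{\D_f^l}v_{\D_f^l}$ in $L^2$ (now assumed directly), and limit conformity is invoked only against $(\q_m,\q_f)\in C^\infty_{W_m}\times C^\infty_{W_f}$ before Lemma \ref{lemmaweakderivatives} concludes. Your regrouping of the interface integrand as $\bigl(\wt\Pi_{\D_f^l} v_{\D_f^l}-\Pi^\alpha_{\D_m^l} v_{\D_m^l}\bigr)-\Pi_{\D_f^l} v_{\D_f^l}$ to identify the weak limit $J_\alpha-v_f$ is exactly the step implicit in the paper's derivation of \eqref{prooftheoremlimitregularity_1}.
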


\subsection{Application to \eqref{formVar}}

The non conforming discrete variational formulation of the model problem is defined by: find 
$(u_{\D_m},u_{\D_f})\in X_{\D}^0$ such that 
\begin{eqnarray}
\label{GradientScheme}
\left.\begin{array}{r@{\,\,}c@{\,\,}ll}
&&\dsp \int_\Omega  \Lambda_m\nabla_{\D_m} u_{\D_m} \cdot\nabla_{\D_m} v_{\D_m} \d\x   
+ \dsp\int_\G  \Lambda_f  \nabla_{\D_f} u_{\D_f} \cdot\nabla_{\D_f} v_{\D_f} \d\tau_f(\x)  
+\dsp \sum_{i\in I}\int_{\G_i} {T_f\over 2\xi-1} \\
&& \dsp \sum_{(\alpha,\beta)\in\{(\alpha^\pm(i),\alpha^\mp(i))\}} 
\!\!\!\!\!\!\!\!\!\!\!\!\!\!\!\!
\( \xi \Pi_{\D_m}^{\alpha} u_{\D_m} + (1-\xi) \Pi_{\D_m}^{\beta} u_{\D_m} - \wt \Pi_{\D_f} u_{\D_f}\)
\(\Pi_{\D_m}^{\alpha} v_{\D_m} - \wt \Pi_{\D_f} v_{\D_f}\) \d\tau(\x) \\
&&\dsp - \int_\O h_m \Pi_{\D_m} v_{\D_m} \d\x 
- \int_\G h_f \Pi_{\D_f} v_{\D_f} \d\tau_f(\x) = 0, 
\end{array}\right.
\end{eqnarray}
for all $(v_{\D_m},v_{\D_f})\in X_{\D}^0$. 

\begin{proposition}
Let $\xi\in ({1\over 2},1]$ and $\D$ be a gradient discretization, then 
\eqref{GradientScheme} has a unique solution $(u_{\D_m},u_{\D_f})\in X_\D^0$ satisfying the a priori estimate 
$$
\|(u_{\D_m},u_{\D_f})\|_\D \leq C \( \|h_m\|_{L^2(\O)} + \|h_f\|_{L^2(\G)}\)
$$
with $C$ depending only on $\xi$, $\mathcal C_\D$, $\ul\lambda_m$, $\ul\lambda_f$, $\ul d_f$, $\overline d_f$, and $\ul \lambda_{f,n}$. 
\end{proposition}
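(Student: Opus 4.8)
The plan is to recast \eqref{GradientScheme} as $a_\D(u_\D,v_\D) = L_\D(v_\D)$ for all $v_\D\in X_\D^0$, where $a_\D$ is the bilinear form on $X_\D^0\times X_\D^0$ collecting the first three lines of \eqref{GradientScheme} and $L_\D(v_\D) = \int_\O h_m\Pi_{\D_m}v_{\D_m}\d\x + \int_\G h_f\Pi_{\D_f}v_{\D_f}\d\tau_f(\x)$, and then to apply the Lax--Milgram theorem on the finite dimensional space $(X_\D^0,\|\cdot\|_\D)$. First I would verify continuity. Using the $L^\infty$ upper bounds $\ov\lambda_m$, $\ov\lambda_f$, $\ov d_f$, $\ov\lambda_{f,\n}$, the bound $\frac{1}{2\xi-1}$ on the interface prefactor, and the Cauchy--Schwarz inequality, each of the three groups of terms of $a_\D$ is controlled by a constant times $\|u_\D\|_\D\,\|v_\D\|_\D$; for the interface terms one first bounds $|\xi\,\Pi_{\D_m}^\alpha u_{\D_m} + (1-\xi)\Pi_{\D_m}^\beta u_{\D_m} - \wt\Pi_{\D_f}u_{\D_f}|$ by $|\Pi_{\D_m}^\alpha u_{\D_m} - \wt\Pi_{\D_f}u_{\D_f}| + |\Pi_{\D_m}^\beta u_{\D_m} - \wt\Pi_{\D_f}u_{\D_f}|$ before using Cauchy--Schwarz. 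Continuity of $L_\D$ is immediate from the definition of $\mathcal C_\D$, giving $|L_\D(v_\D)| \leq \max(1,\ov d_f)\,\mathcal C_\D\,(\|h_m\|_{L^2(\O)} + \|h_f\|_{L^2(\G)})\,\|v_\D\|_\D$.

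The heart of the argument is coercivity, which reuses the elementary two-variable inequality from the proof of the continuous well-posedness proposition. Testing the interface contribution with $v_\D = u_\D$ and fixing $i\in I$, I would set $A := \Pi_{\D_m}^{\alpha^+(i)}u_{\D_m} - \wt\Pi_{\D_f}u_{\D_f}$ and $B := \Pi_{\D_m}^{\alpha^-(i)}u_{\D_m} - \wt\Pi_{\D_f}u_{\D_f}$; since $\xi\,\Pi_{\D_m}^\alpha u_{\D_m} + (1-\xi)\Pi_{\D_m}^\beta u_{\D_m} - \wt\Pi_{\D_f}u_{\D_f} = \xi(\Pi_{\D_m}^\alpha u_{\D_m} - \wt\Pi_{\D_f}u_{\D_f}) + (1-\xi)(\Pi_{\D_m}^\beta u_{\D_m} - \wt\Pi_{\D_f}u_{\D_f})$, the sum over $(\alpha,\beta)\in\{(\alpha^\pm(i),\alpha^\mp(i))\}$ reproduces exactly the middle member $(\xi A + (1-\xi)B)A + (\xi B + (1-\xi)A)B$, which is $\geq A^2 + B^2$. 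Together with $\frac{T_f}{2\xi-1}\geq T_f \geq \frac{2\ul\lambda_{f,\n}}{\ov d_f} > 0$ (using $2\xi-1\leq 1$), the matrix term bounded below by $\ul\lambda_m\|\grad_{\D_m}u_{\D_m}\|_{L^2(\O)^d}^2$ and the fracture term by $\ul d_f\ul\lambda_f\|\grad_{\D_f}u_{\D_f}\|_{L^2(\G)^{d-1}}^2$, this yields $a_\D(u_\D,u_\D)\geq c_0\|u_\D\|_\D^2$ with $c_0 = \min(\ul\lambda_m,\ \ul d_f\ul\lambda_f,\ \tfrac{2\ul\lambda_{f,\n}}{\ov d_f})$.

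With continuity and coercivity in hand, the Lax--Milgram theorem delivers a unique solution $u_\D\in X_\D^0$; on this finite dimensional space coercivity alone already forces the associated linear map to be injective, hence bijective, so existence and uniqueness also follow directly. Finally, taking $v_\D = u_\D$ in \eqref{GradientScheme} and combining the coercivity lower bound $c_0\|u_\D\|_\D^2 \leq a_\D(u_\D,u_\D) = L_\D(u_\D)$ with the continuity bound on $L_\D$, division by $\|u_\D\|_\D$ produces the a priori estimate with $C = \max(1,\ov d_f)\,\mathcal C_\D/c_0$, which depends only on $\xi$, $\mathcal C_\D$, $\ul\lambda_m$, $\ul\lambda_f$, $\ul d_f$, $\ov d_f$ and $\ul\lambda_{f,\n}$. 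The one point requiring care is the coercivity of the interface term: the pair sum over $(\alpha,\beta)$ must be reorganized precisely so that the two-variable inequality applies, and one must track that the transmissibility prefactor stays bounded away from zero; everything else mirrors the continuous proposition and involves only routine estimates.
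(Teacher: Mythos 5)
Your proof is correct and takes essentially the same approach as the paper: the paper's own proof consists of the single remark that the Lax--Milgram theorem applies, with coercivity resting on exactly the elementary inequality $a^2+b^2 \leq (\xi a + (1-\xi)b)\,a + (\xi b + (1-\xi)a)\,b$ that the paper displays in the proof of the continuous well-posedness proposition. Your write-up simply supplies the details (continuity of the forms, the coercivity constant, and the derivation of the a priori bound) that the paper leaves implicit.
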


\begin{proof}
The Lax-Milgram Theorem applies, which ensures this result.
\hfill\qed
\end{proof}

The main theoretical result for gradient schemes is stated by the following proposition:

\begin{proposition}\emph{(Error Estimate)}
\label{properror}
Let $u = (u_m,u_f)\in V^0$, $\q = (\q_m,\q_f)\in W$ be the solution of \eqref{modeleCont}. 
Let $\xi\in ({1\over 2},1]$, $\D$ be a gradient discretization and $u_\D = (u_{\D_m},u_{\D_f})\in X_\D^0$ be the solution of \eqref{GradientScheme}. 
Then, there exists $C_0 > 0$ depending only on $\xi$, $\mathcal C_\D$, $\ul\lambda_m$, $\ul\lambda_f$,$\ov\lambda_m$, $\ov\lambda_f$, 
$\ul d_f$, $\ov d_f$, $\ul \lambda_{f,n}$, and $\ov \lambda_{f,n}$ such that one has the following error estimate:  
\begin{align*}
&\|\Pi_{\D_m} u_{\D_m} -u_m\|_{L^2(\O)} + \|\Pi_{\D_f} u_{\D_f} - u_f\|_{L^2(\G)} 
+  \dsp\|\wt \Pi_{\D_f} u_{\D_f} - u_f\|_{L^2(\G)}  \\
& + \sum_{\alpha\in\chi}\|\Pi_{\D_m}^\alpha u_{\D_m} -\gamma_\alpha u_m\|_{L^2(\G_\alpha)}
+ \|\nabla u_m-\nabla_{\D_m} u_{\D_m}\|_{L^2(\O)^d} + \|\nabla_\tau u_f-\nabla_{\D_f} u_{\D_f}\|_{L^2(\G)^{d-1}} \\
&\leq C_0 (\mathcal{S}_{\D}(u_m,u_f) + \mathcal{W}_{\D}(\q_m,\q_f)).
\end{align*}
\end{proposition}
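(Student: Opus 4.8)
The plan is to run the standard gradient-scheme (second Strang) argument: bound the discrete energy distance between the scheme solution $u_\D$ and a well-chosen interpolant of the exact solution, then convert this into the stated error by the triangle inequality.

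First I would rewrite \eqref{GradientScheme} as $a_\D(u_\D,v_\D)=L_\D(v_\D)$ for all $v_\D\in X_\D^0$, where $a_\D$ collects the matrix and fracture diffusion integrals together with the matrix–fracture coupling sum, and $L_\D$ is the source functional. Using the elementary inequality $a^2+b^2\leq(\xi a+(1-\xi)b)a+(\xi b+(1-\xi)a)b\leq\frac{1}{2\xi-1}(a^2+b^2)$ already invoked for \eqref{formVar}, together with the lower bounds on $\Lambda_m,\Lambda_f,d_f,\lambda_{f,\n}$, the form $a_\D$ is coercive, $a_\D(v_\D,v_\D)\geq C_1\|v_\D\|_\D^2$ with $C_1$ depending only on $\xi,\ul\lambda_m,\ul\lambda_f,\ul d_f,\ov d_f,\ul\lambda_{f,\n}$; the analogous upper bounds give continuity. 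I then pick $\ul v_\D\in X_\D^0$ realizing the consistency minimum, so that $s(\ul v_\D,u)=\mathcal S_\D(u)$, and set $w_\D=u_\D-\ul v_\D$.

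The heart of the proof is a conformity identity for $w_\D$. Since the well-posedness proposition guarantees $\q=(\q_m,\q_f)=-(\Lambda_m\grad u_m,d_f\Lambda_f\grad_\tau u_f)\in W$, the functional $w(w_\D,\q)$ of the limit-conformity definition is available. I would insert the five strong equations of \eqref{modeleCont} into $w(w_\D,\q)$: the identities $\q_m=-\Lambda_m\grad u_m$, $\div\q_m=h_m$, $\q_f=-d_f\Lambda_f\grad_\tau u_f$ turn the volume integrals into diffusion and source contributions, while $\div_{\tau_i}\q_f=d_f h_f+\gamma_{\n,\alpha^+(i)}\q_m+\gamma_{\n,\alpha^-(i)}\q_m$ produces the fracture source plus a normal-flux term. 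Re-indexing the sum $\sum_{\alpha\in\chi}\int_{\G_\alpha}$ as a sum over fractures $i\in I$ with their two sides $\{\alpha^\pm(i)\}$, the $\Pi_{\D_f}w_{\D_f}$ contributions from the fracture term and from the boundary term of $w$ cancel exactly, and substituting the coupling law $\gamma_{\n,\alpha^\pm(i)}\q_m=\frac{T_f}{2\xi-1}(\xi\gamma_{\alpha^\pm(i)}u_m+(1-\xi)\gamma_{\alpha^\mp(i)}u_m-u_f)$ identifies the remaining terms as the coupling form evaluated with the exact traces. The upshot is
\[
w(w_\D,\q)=L_\D(w_\D)-a_\D^{\mathrm{mix}}(u,w_\D),
\]
where $a_\D^{\mathrm{mix}}(u,\cdot)$ is $a_\D$ with the continuous objects $\grad u_m,\grad_\tau u_f,\gamma_\alpha u_m,u_f$ in the first slot and the discrete reconstructions $\nabla_{\D_m}w_{\D_m},\nabla_{\D_f}w_{\D_f},\Pi_{\D_m}^\alpha w_{\D_m},\wt\Pi_{\D_f}w_{\D_f}$ in the second. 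This bookkeeping — getting every coupling term to match and the $\Pi_{\D_f}$ pieces to cancel — is the step I expect to be the main obstacle, precisely because of the two-sided $\alpha^\pm$ structure and the weighted fracture measure $\d\tau_f(\x)$.

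With the identity in hand the rest is mechanical. Using the scheme $a_\D(u_\D,w_\D)=L_\D(w_\D)$ and subtracting $a_\D(\ul v_\D,w_\D)$ gives
\[
a_\D(w_\D,w_\D)=w(w_\D,\q)+\big(a_\D^{\mathrm{mix}}(u,w_\D)-a_\D(\ul v_\D,w_\D)\big).
\]
The first term on the right is bounded by $\mathcal W_\D(\q)\|w_\D\|_\D$ by definition of limit conformity; the bracket is bounded by $C\,\mathcal S_\D(u)\|w_\D\|_\D$ via Cauchy–Schwarz and the coefficient upper bounds, since each first-slot difference (the two gradient differences and the trace/$\wt\Pi_{\D_f}$ differences) is exactly one of the quantities measured by $s(\ul v_\D,u)=\mathcal S_\D(u)$, while the second slot is controlled by $\|w_\D\|_\D$. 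Coercivity then yields $\|w_\D\|_\D\leq C_1^{-1}(\mathcal W_\D(\q)+C\,\mathcal S_\D(u))$. Finally I would split each term on the left of the statement as an interpolation defect of $\ul v_\D$ (itself $\leq\mathcal S_\D(u)$) plus a defect carried by $w_\D$: the two gradient errors are directly $\leq\|w_\D\|_\D$, the errors of $\Pi_{\D_m},\Pi_{\D_f}$ are $\leq\mathcal C_\D\|w_\D\|_\D$ by coercivity, and the trace errors together with the $\wt\Pi_{\D_f}$ reconstruction error are recovered from the jump contribution $\|\Pi_{\D_m}^\alpha w_{\D_m}-\wt\Pi_{\D_f}w_{\D_f}\|_{L^2(\G_\alpha)}\leq\|w_\D\|_\D$ built into $\|\cdot\|_\D$ combined with the already-bounded reconstructions, which is where the dependence of $C_0$ on $\mathcal C_\D$ enters. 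Summing over all terms gives the claimed estimate.
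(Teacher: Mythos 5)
Your proposal follows essentially the same route as the paper's proof: the identical conformity identity obtained by inserting the strong equations of \eqref{modeleCont} into $w(\cdot,\q)$ (re-indexing $\sum_{\alpha\in\chi}$ over fracture sides, cancelling the $\Pi_{\D_f}$ contributions, and substituting the coupling law), the same choice of an interpolant realizing $\mathcal{S}_\D(u)$, the same coercivity/energy bound on the difference between $u_\D$ and that interpolant, and the same concluding triangle inequality invoking $\mathcal C_\D$ (including the same, somewhat terse, final step recovering the individual trace and $\wt\Pi_{\D_f}$ error terms from the jump contribution). The only difference is presentational: you organize the computation as a second-Strang-lemma argument with named bilinear forms $a_\D$, $a_\D^{\mathrm{mix}}$, $L_\D$, whereas the paper carries out the identical manipulation inline in display \eqref{prooferrorestimate1}.
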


\begin{proof}
From the definition of $\mathcal{W}_\D$, and using the definitions 
\eqref{modeleCont} of the solution $u,\q$ and \eqref{GradientScheme} 
of the discrete solution $u_\D$, 
it holds for all $(v_{\D_m},v_{\D_f})\in V^0$
\begin{align}
\begin{aligned}
\label{prooferrorestimate1}
&\|(v_{\D_m},v_{\D_f})\|_\D\cdot \mathcal{W}_\D(\q_m,\q_f)\\
&\geq \bigg\vert \int_\O  \(\nabla_{\D_m} v_{\D_m} \cdot \q_m + (\Pi_{\D_m} v_{\D_m}) h_m \) \d\x 
+ \int_{\G} \( \nabla_{\D_f} v_{\D_f} \cdot \q_f 
+ (\Pi_{\D_f} v_{\D_f}) d_f h_f \) \d\tau(\x)\\
&+ \sum_{i\in I}\int_{\G_i} {T_f\over 2\xi-1} \sum_{(\alpha,\beta)\in\{(\alpha^\pm(i),\alpha^\mp(i))\}} 
\!\!\!\!\!\!\!\!\!\!\!\!\!\!\( \xi \gamma_{\alpha} u_m + (1-\xi) \gamma_{\beta} u_m - u_f\)
\( \wt \Pi_{\D_f} v_{\D_f}- \Pi_{\D_m}^{\alpha} v_{\D_m}\) \d\tau(\x)\bigg\vert\\
&= \bigg\vert \int_\O  \( \Lambda_m\nabla_{\D_m} v_{\D_m} \cdot ( \nabla_{\D_m} u_{\D_m} - \grad u_m ) \) \d\x 
+ \int_{\G} \( \Lambda_f\nabla_{\D_f} v_{\D_f} \cdot ( \nabla_{\D_f} u_{\D_f} - \grad_\tau u_f ) \) \d\tau_f(\x)\\
&+ \sum_{i\in I}\int_{\G_i} {T_f\over 2\xi-1} \sum_{(\alpha,\beta)\in\{(\alpha^\pm(i),\alpha^\mp(i))\}} 
\!\!\!\!\!\!\!\!\!\!\!\!\!\! \( \wt \Pi_{\D_f} v_{\D_f} - \Pi_{\D_m}^\alpha v_{\D_m}\) \\
& \quad\quad\quad\quad \times\(\xi \gamma_\alpha u_m + (1-\xi)\gamma_\beta u_m - u_f 
- \xi \Pi_{\D_m}^\alpha u_{\D_m} - (1-\xi) \Pi_{\D_m}^\beta u_{\D_m} + \wt \Pi_{\D_f} u_{\D_f} \)  \d\tau(\x)\bigg\vert
\end{aligned}
\end{align}
Let us choose $w_\D = (w_{\D_m},w_{\D_f}) \in X_\D^0$, s.t. $s(w_\D,u) = \mathcal S_\D(u)$ and 
set $(v_{\D_m},v_{\D_f}) = u_\D - w_\D$ in \eqref{prooferrorestimate1}. Then holds
\begin{align*}
&\|\nabla u_m-\nabla_{\D_m} u_{\D_m}\|_{L^2(\O)^d} + \|\nabla_\tau u_f-\nabla_{\D_f} u_{\D_f}\|_{L^2(\G)^{d-1}}  \\
& + \sum_{\alpha\in\chi}\|\Pi_{\D_m}^\alpha u_{\D_m} - \wt \Pi_{\D_f} u_{\D_f} -\gamma_\alpha u_m + u_f\|_{L^2(\G_\alpha)}
\leq C \cdot (\mathcal{S}_{\D}(u_m,u_f) + \mathcal{W}_{\D}(\q_m,\q_f)),
\end{align*}
with a constant $C > 0$ depending only on $\xi$, $\ul\lambda_m$, $\ul\lambda_f$,$\ov\lambda_m$, $\ov\lambda_f$, 
$\ul d_f$, $\ov d_f$, $\ul \lambda_{f,n}$, and $\ov \lambda_{f,n}$. Taking coercivity into account leads to the statement of the proposition.
\hfill\qed
\end{proof}

\section{Two Examples of Gradient Schemes}
\label{sec_VAGHFV}

Following \cite{Eymard.Herbin.ea:2010}, we consider generalised polyhedral meshes of $\Omega$.
Let $\cells$ be the set of cells that are disjoint open subsets of $\Omega$ such that
$\bigcup_{K\in\cells} \ov K = \overline\Omega$. For all $K\in\cells$, ${\x}_K$ denotes the so-called ``center'' of the cell $K$ under the assumption that $K$ is star-shaped with respect to ${\x}_K$. 
Let $\faces$ denote the set of faces of the mesh. The faces are not assumed to be planar for the VAG discretization, hence the term ``generalised polyhedral cells'', but they need to be planar for the HFV discretization. We denote by $\nodes$ the set of vertices of the mesh. Let $\nodes_K$, $\faces_K$, $\nodes_\sigma$
respectively denote the set of the vertices of $K\in\cells$, faces of $K$, and vertices of $\sigma\in \faces$. For any face $\sigma\in \faces_K$, we have $\nodes_\sigma \subset \nodes_K$.
Let $\cells_\s$ (resp. $\faces_\s$) denote the set of the cells (resp. faces) sharing the vertex $\s\in\nodes$. 
The set of edges of the mesh is denoted by $\edges$ 
and $\edges_\sigma$ denotes the set of edges of the face $\sigma\in \faces$. Let 
$\faces_e$ denote the set of faces sharing the edge $e\in {\cal E}$, and ${\cal M}_\sigma$ denote the set of cells 
sharing the face $\sigma\in\faces$.  
We denote by ${\cal F}_{ext}$ the subset of faces $\sigma \in \faces $ such that $\cells_\sigma$ has only one element, and 
we set $\edges_{ext} = \bigcup_{\sigma\in \faces_{ext}} \edges_\sigma$, and  
$\nodes_{ext} = \bigcup_{\sigma\in \faces_{ext}} \nodes_\sigma$. The mesh is assumed to be conforming 
in the sense that for all $\sigma\in \faces\sm\faces_{ext}$, the set $\cells_\sigma$ contains exactly two cells. 
It is assumed that for each face $\sigma\in\faces$, there exists a so-called ``center'' of the face
${\x}_\sigma$ such that
$$
{\x}_\sigma = \sum_{\s\in \nodes_\sigma} \beta_{\sigma,\s}~\x_\s, \mbox{ with }
\sum_{\s\in \nodes_\sigma} \beta_{\sigma,\s}=1,
$$
where $\beta_{\sigma,\s}\geq 0$ for all $\s\in \nodes_\sigma$.
The face $\sigma$ is assumed to match with the union of the triangles
$T_{\sigma,e}$ defined by the face center ${\x}_\sigma$
and each of its edge $e\in\edges_\sigma$. \\

The mesh is assumed to be conforming w.r.t. the fracture network $\G$ in the sense 
that there exist subsets $\faces_{\G_i}$, $i\in I$ of $\faces$ such that 
\begin{align}
\label{eqconfmesh}
\overline \G_i = \bigcup_{\sigma\in\faces_{\G_i}} \bar\sigma.  
\end{align}
We will denote by $\faces_\G$ the set of fracture faces $\bigcup_{i\in I} \faces_{\G_i}$.
Similarly, we will denote by $\edges_\G$ the set of fracture edges $\bigcup_{\sigma\in\faces_\G}\edges_\sigma$ and by $\nodes_\G$ the set of fracture vertices $\bigcup_{\sigma\in\faces_\G} \nodes_\sigma$.\\

We also define a submesh $\T$ of tetrahedra, where each tetrahedron $D_{K,\sigma,e}$ is the convex hull of the 
cell center $\x_K$ of $K$, the face center $\x_\sigma$ of $\sigma\in\faces_K$ and the edge $e\in \edges_\sigma$. 
Similarly we define a triangulation $\Delta$ of $\G$, such that we have:
$$
\T = \bigcup_{K\in\faces,\sigma\in\faces_K,e\in\edges_\sigma}D_{K,\sigma,e}\quad\text{and}\quad
\Delta=\bigcup_{\sigma\in\faces_\G,e\in\edges_\sigma}T_{\sigma,e}.
$$
We introduce for $D\in\T$ the diameter $h_D$ of $D$ and set $h_\T = \max_{D\in\T}h_D$. 
The regularity of our polyhedral mesh will be measured by the shape regularity of the tetrahedral submesh defined by 
$\theta_\T = \max_{D\in\T}\frac{h_D}{\rho_D}$ 
where $\rho_D$ is the insphere diameter of $D\in\T$.\\

The set of matrix $\times$ fracture degrees of freedom is denoted by $\dofm\times\doff$.
The real vector spaces $X_{\D_m}$ and $X_{\D_f}$ of discrete unknowns in the matrix and in the 
fracture network respectively are then defined by
\begin{align*}
X_{\D_m} & = \operatorname{span}\{\mathfrak e_\nu\mid\nu\in\dofm\}\\
X_{\D_f} & = \operatorname{span}\{\mathfrak e_\nu\mid\nu\in\doff\},
\end{align*}
where
$$
\mathfrak e_\nu = \left\{
\begin{array}{l l}
(\delta_{\nu\mu})_{\mu\in\dofm} &\quad \text{for }\nu\in\dofm\\
(\delta_{\nu\mu})_{\mu\in\doff} &\quad \text{for }\nu\in\doff.
\end{array}\right.
$$
For $u_{\D_m}\in X_{\D_m}$ and $\nu\in\dofm$ we denote by $u_\nu$ the $\nu$th component of $u_{\D_m}$ and likewise for 
$u_{\D_f}\in X_{\D_f}$ and $\nu\in\doff$.
We also introduce the product of these vector spaces
$$
X_\D = X_{\D_m}\times X_{\D_f},
$$
for which we have $\operatorname{dim} X_\D = \#\dofm + \#\doff$.

To account for our homogeneous boundary conditions on $\partial \Omega$ and $\Sigma_0$ we 
introduce the subsets $\dofDirm\subset \dofm$, and $\dofDirf\subset \doff$, and we set $\dofDir = \dofDirm\times\dofDirf$, and 
$$
X_\D^0 = \{ u\in X_\D \, |\, u_\nu = 0 \mbox{ for all } \nu\in \dofDir\}.  
$$

\subsection{Vertex Approximate Gradient Discretization}
\label{sec_VAG}
In this subsection, 
the VAG discretization introduced in \cite{Eymard.Herbin.ea:2010} for diffusive problems 
on heterogeneous anisotropic media is extended to the hybrid dimensional model. 
We consider the ${\mathbb P}_1$ finite element construction as well as a finite volume version 
using lumping both for the source terms and the matrix fracture fluxes. \\

We first establish an equivalence relation on each $\cells_s,\ s\in\nodes,$ by
\begin{align*}
K\equiv_{\cells_s} L\quad \Longleftrightarrow\quad
&\mbox{ there exists }n\in\N\mbox{ and a sequence }(\sigma_i)_{i=1,\dots,n}\mbox{ in } \faces_s\backslash\faces_\G,\\
&\mbox{ such that }K\in\cells_{\sigma_1},L\in\cells_{\sigma_n}\mbox{ and }\cells_{\sigma_{i+1}}\cap\cells_{\sigma_i}
\neq\emptyset\\
&\mbox{ for }i=1,\dots,n-1.
\end{align*}
Let us then denote by $\ov\cells_s$ the set of all classes of equivalence of $\cells_s$ and by $\ov K_s$ the element of 
$\ov\cells_s$ containing $K\in\cells$. Obviously $\ov\cells_s$ might have more than one element only if $s\in\nodes_\G$.
Then we define (cf. figure \ref{figureDofVag})
\begin{align*}
\dofm & = \cells\cup\Bigl\{K_\sigma\bigm | \sigma\in\faces_\G,K\in\cells_\sigma\Bigr\}
\cup\Bigl\{\ov K_s\bigm | s\in\nodes,\ov K_s \in\ov\cells_s\Bigr\}, \\
\doff & = \faces_\G\cup\nodes_\G,\\
\dofDirm & : = \Bigl\{\ov K_s\bigm  |s\in\nodes_{ext},\ov K_s \in\ov\cells_s\Bigr\},\\
\dofDirf & = \nodes_\Gamma\cap \nodes_{ext}. 
\end{align*}
We thus have
\begin{align}
\begin{aligned}
\label{XVag}
X_{\D_m} &= \Bigl\{u_K\bigm |K\in\cells\Bigr\}\cup\Bigl\{u_{K_\sigma}\bigm |\sigma\in\faces_\G,K\in\cells_\sigma\Bigr\}\\
&\cup\Bigl\{u_{\ov K_s}\bigm |s\in\nodes,\ov K_s \in\ov\cells_s\Bigr\}, \\
X_{\D_f} &= \Bigl\{u_\sigma\bigm | \sigma\in\faces_\G\Bigr\}\cup\Bigl\{u_s\bigm | s\in\nodes_\G\Bigr\}.
\end{aligned}
\end{align}

Now we can introduce the piecewise affine interpolators (or reconstruction operators)
\begin{align*}
\Pi_\T \colon X_{\D_m}\longrightarrow H^1(\Omega\backslash\ov\G)
\qquad\text{ and }\qquad
\Pi_\Delta \colon X_{\D_f}\longrightarrow H^1(\G),
\end{align*}
which act linearly on $X_{\D_m}$ and $X_{\D_f}$, such that 
$\Pi_\T u_{\D_m}$ is affine on each $D_{K,\sigma,e}\in\T$  
 and satisfies on each cell $K\in\cells$
\begin{align*}
\begin{array}{l@{\,\,}l@{\,\,}rl}
&\Pi_\T u_{\D_m}(\x_K) = u_K,\\
&\Pi_\T u_{\D_m}(\x_s) = u_{\ov K_s} \qquad &\forall s\in\nodes_K,\\
&\Pi_\T u_{\D_m}(\x_\sigma) = u_{K_\sigma} \qquad &\forall \sigma\in\faces_K\cap\faces_\G ,\\
&\Pi_\T u_{\D_m}(\x_\sigma) = \sum\limits_{s\in\nodes_\sigma}\beta_{\sigma,s}u_{\ov K_s} \qquad 
&\forall\sigma\in\faces_K\backslash\faces_\G,
\end{array}
\end{align*}
while $\Pi_\Delta u_{\D_f}$ is affine on each $T_{\sigma,e}\in\Delta$ 
and satisfies for all $\nu\in dof_{\D_f}$
\begin{align*}
\Pi_\Delta u_{\D_f}(\x_\nu) = u_\nu,
\end{align*}
where $\x_\nu\in\ov\Omega$ is the grid point associated with the degree of freedom $\nu\in\dofm\cup\doff$.
The discrete gradients on $X_{\D_m}$ and $ X_{\D_f}$ are subsequently defined by
\begin{align}
\label{gradVag}
\grad_{\D_m} = \grad\Pi_\T\qquad
\text{and}\qquad\grad_{\D_f} = \grad_\tau\Pi_\Delta.
\end{align}

\begin{figure}
  \begin{minipage}[c]{0.3\textwidth}
\caption{
Cell $K$ touching a fracture face $\sigma$. Illustration of the simplices on which:
\newline Red: $\grad_{\D_m}$ is constant.
\newline Grey: $\grad_{\D_f}$ is constant.
}
\label{figureDofVag}
  \end{minipage}\hfill
    \begin{minipage}[c]{0.6\textwidth}
    \includegraphics[width=0.6\textwidth]{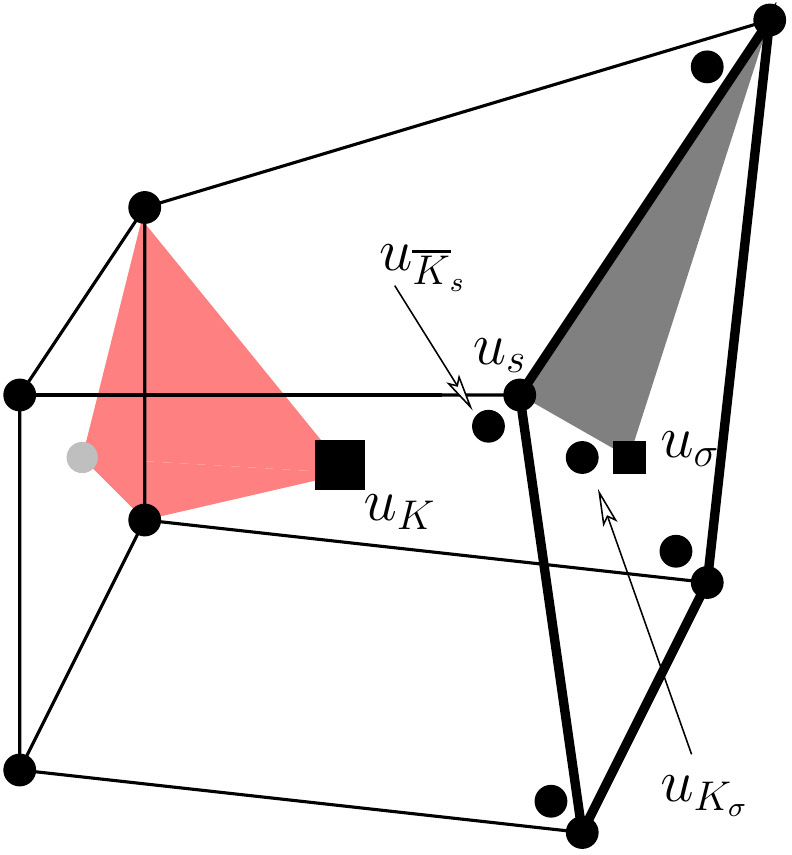}
  \end{minipage}
\end{figure}

We define the VAG-FE scheme's reconstruction operators by
\begin{align}
\begin{aligned}
\label{PiVagFE}
&\bullet\ \Pi_{\D_m} = \Pi_\T,\\
&\bullet\ \Pi_{\D_f} =\wt\Pi_{\D_f} = \Pi_\Delta,\\
&\bullet\ \Pi^\alpha_{\D_m} = \gamma_\alpha\Pi_{\T}\quad\text{ for all }\alpha\in\chi.
\end{aligned}
\end{align}

For the family of \textit{VAG-CV} schemes, reconstruction operators are piecewise constant.
We introduce, for any given $K\in\cells$, a partition
$$
\ov K = \ov\omega_K\cup\(\bigcup_{s\in\nodes_K\setminus\nodes_{ext}}\ov\omega_{K,\ov K_s}\)\cup\(\bigcup_{\sigma\in\faces_K\cap\faces_\G}
 \ov\omega_{K_\sigma}\) .
$$
Similarly, we define for any given $\sigma\in\faces_\G$ a partition
$$
\ov\sigma = \ov\omega_\sigma\cup\(\bigcup_{s\in\nodes_\sigma\setminus\nodes_{ext}}\ov\omega_{\sigma,s}\) .
$$
With each $s\in\nodes\setminus\nodes_{ext}$ and $\ov K_s\in\ov\cells_s$ we associate an open set 
$\omega_{\ov K_s}$, satisfying
$$
\ov\omega_{\ov K_s} = \bigcup_{K\in\ov K_s}\ov\omega_{K,\ov K_s}.
$$
Similarly, for all $s\in\nodes_\G\setminus\nodes_{ext}$ we define $\omega_s$ by
$$
\ov\omega_{s} = \bigcup_{\sigma\in\faces_s\cap\faces_\G}\ov\omega_{\sigma,s}.
$$
We obtain the partitions 
$$
\ov\Omega = \(\bigcup_{\nu\in\dofm\setminus\dofDirm}\ov\omega_\nu\),\quad \ov \Gamma = \(\bigcup_{\nu\in\doff\setminus\dofDirf}\ov\omega_\nu\).
$$
 
We also introduce for each $T = T_{\sigma,s,s'}\in\Delta$ a partition $\ov T = \bigcup_{i=1}^3 \ov T_i$, which we need for the definition of the VAG-CV matrix-fracture interaction operators. We assume that holds 
 $|T_1| = |T_2| = |T_3| = \frac{1}{3} |T|$ in order to preserve the first order convergence of the scheme.\\
 
Finally, we need a mapping between the degrees of freedom of the matrix domain, which are situated on one side of the fracture network, and the set of indices $\chi$. For $K_\sigma\in\dofm$ we have the one-element set 
$\chi(K_\sigma) = \{\alpha\in\chi\mid \n_{K,\sigma} = \n_\alpha\text{ on }\sigma\}$ and therefore the notation
$\alpha(K_\sigma) = \alpha\in\chi(K_\sigma)$.\\

The VAG-CV scheme's reconstruction operators are
\begin{align}
\begin{aligned}
\raisetag{3cm}\label{PiVagCV}
&\bullet\ \Pi_{\D_m} u_{\D_m} =\sum\limits_{\nu\in\dofm\setminus\dofDirm}u_\nu\mathbb 1_{\omega_\nu} ,\\
&\bullet\ \Pi_{\D_f} u_{\D_f}= \sum\limits_{\nu\in\doff\setminus\dofDirf}u_\nu\mathbb 1_{\omega_\nu},\\
&\bullet\ \wt\Pi_{\D_f} u_{\D_f} = \sum\limits_{T_{\sigma,s,s'}\in\Delta} 
( u_{\sigma}\mathbb 1_{T_1} + u_{s}\mathbb 1_{T_2} + u_{s'}\mathbb 1_{T_3}),\\
&\bullet\ \Pi^\alpha_{\D_m}u_{\D_m} = \sum\limits_{T_{\sigma,s,s'}\in\Delta} \sum\limits_{K\in\cells_\sigma}
( u_{K_\sigma}\mathbb 1_{T_1} + u_{\ov K_s}\mathbb 1_{T_2} + u_{\ov K_{s'}}
\mathbb 1_{T_3})\delta_{\alpha(K_\sigma)\alpha}\mathbb 1_{\G_\alpha}.
\end{aligned}
\end{align}

\begin{remark}
\label{remarkdefvag}
The VAG-CV scheme leads us to recover fluxes for the matrix-fracture interactions 
involving degrees of freedom located at the same physical point 
(see subsection \ref{sec_FV}).
\end{remark}

\begin{proposition}
\label{propcclvagfe}
Let us consider a sequence of meshes $(\cells^l)_{l\in\N}$ and let us assume that the sequence $(\T^l)_{l\in\N}$ of tetrahedral submeshes is shape regular, i.e. $\theta_{\T^l}$ is uniformly bounded. We also assume that $\lim_{l \rightarrow \infty} {h_{\T^l}} = 0.$ Then, the corresponding sequence of gradient discretizations $(\D^l)_{l\in\N}$, defined by \eqref{XVag}, \eqref{gradVag}, \eqref{PiVagFE}, is coercive, 
consistent and limit conforming.
\end{proposition}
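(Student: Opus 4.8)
The plan rests on a single structural observation: the VAG-FE scheme is \emph{conforming}. Indeed, for $v_\D=(v_{\D_m},v_{\D_f})\in X_\D^0$ the reconstructed pair $(\Pi_\T v_{\D_m},\Pi_\Delta v_{\D_f})$ lies in $V^0$, since $\Pi_\T v_{\D_m}$ is continuous across every internal face away from $\G$ (so it belongs to $H^1(\OG)$, the jump across $\G$ being carried by the separate unknowns $u_{K_\sigma}$) and vanishes on $\del\O$ through $\dofDirm$, while $\Pi_\Delta v_{\D_f}$ is continuous at the fracture intersections and vanishes on $\Sigma_0$. Moreover, by construction $\grad_{\D_m}v_{\D_m}=\grad\Pi_\T v_{\D_m}$, $\grad_{\D_f}v_{\D_f}=\grad_\tau\Pi_\Delta v_{\D_f}$, $\Pi^\alpha_{\D_m}v_{\D_m}=\gamma_\alpha\Pi_\T v_{\D_m}$ and $\wt\Pi_{\D_f}v_{\D_f}=\Pi_{\D_f}v_{\D_f}=\Pi_\Delta v_{\D_f}$, so the discrete seminorm coincides exactly with the continuous one, $\|v_\D\|_\D=\|(\Pi_\T v_{\D_m},\Pi_\Delta v_{\D_f})\|_{V^0}$. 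I would record this identity first, as it drives everything.

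Given it, \emph{coercivity} is immediate: applying the continuous Poincar\'e inequality of Proposition \ref{proppoincarecont} to $(\Pi_\T v_{\D_m},\Pi_\Delta v_{\D_f})$ bounds $\|\Pi_{\D_m}v_{\D_m}\|_{L^2(\O)}+\|\Pi_{\D_f}v_{\D_f}\|_{L^2(\G)}$ by $\mathcal C_P\|v_\D\|_\D$, whence $\mathcal C_{\D^l}\le\mathcal C_P$ uniformly in $l$. \emph{Limit conformity} is in fact exact, $\mathcal W_{\D^l}\equiv 0$. Writing $u_m=\Pi_\T v_{\D_m}$, $u_f=\Pi_\Delta v_{\D_f}$ and using $\wt\Pi_{\D_f}=\Pi_{\D_f}$, the interface integrand in $w(v_\D,\q)$ collapses to $-\gamma_{\n,\alpha}\q_m\,\gamma_\alpha u_m$. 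Integrating by parts the matrix volume term against $\q_m\in W_m$ (licit since $\gamma_{\n,\alpha}\q_m\in L^2(\G_\alpha)$, and the boundary contribution on $\del\O$ vanishes because $u_m$ has zero trace there) produces exactly $\sum_{\alpha\in\chi}\int_{\G_\alpha}\gamma_\alpha u_m\,\gamma_{\n,\alpha}\q_m\,\d\tau(\x)$, while the fracture volume term vanishes identically because $u_f\in H^1_{\Sigma_0}(\G)$ and $\q_f\in W_f$ satisfies precisely the orthogonality built into the definition of $W_f$. The three contributions cancel, so $w(v_\D,\q)=0$ for every $v_\D\in X_\D^0$ and every $\q\in W$.

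The only genuine work is \emph{consistency}, and this is where the shape regularity of $(\T^l)_{l\in\N}$ and $h_{\T^l}\to 0$ enter. I would first treat smooth $\varphi=(\varphi_m,\varphi_f)\in C^\infty_\O\times C^\infty_\G$, taking $v_\D=I_\D\varphi\in X_\D^0$ to be its nodal interpolant (admissible since $\varphi$ vanishes near $\del\O$ and on $\Sigma_0$). Classical $\mathbb{P}_1$ interpolation estimates on the shape-regular submeshes $\T^l$ and $\Delta^l$ (Bramble--Hilbert) control the $L^2$- and gradient-terms of $s(I_\D\varphi,\varphi)$ by $C(\theta_\T)\,h_{\T^l}\big(|\varphi_m|_{H^2(\OG)}+|\varphi_f|_{H^2(\G)}\big)$, the point being that $\varphi_m$ is smooth up to $\G$ from each side, so interpolation is carried out side by side. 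For general $u\in V^0$ I would then invoke the density of $C^\infty_\O\times C^\infty_\G$ in $V^0$, the triangle inequality $s(I_\D\varphi,u)\le s(I_\D\varphi,\varphi)+C\|\varphi-u\|_{V^0}$, and the fact that $\|\cdot\|_{V^0}$, through Proposition \ref{proppoincarecont} and the trace theorem, dominates every norm appearing in $s$; fixing $\varphi$ first and then refining the mesh yields $\mathcal{S}_{\D^l}(u)\to 0$.

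The main obstacle will be the matrix--fracture trace contribution $\sum_{\alpha\in\chi}\|\gamma_\alpha\Pi_\T I_\D\varphi_m-\gamma_\alpha\varphi_m\|_{L^2(\G_\alpha)}$ in the interpolation step: it is not a bulk $L^2$ error but a trace on the fracture faces, and estimating it requires a scaled trace inequality on the boundary tetrahedra adjacent to $\G$ combined with the one-sided interpolation bounds. It is exactly the constant in this trace inequality that forces the uniform bound on $\theta_{\T^l}$; once this local estimate is secured, the remaining summation over $\T^l$ and the density argument are routine.
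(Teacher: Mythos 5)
Your proposal is correct and follows essentially the same route as the paper: the conformity of the VAG-FE reconstructions (so that $\|v_\D\|_\D$ coincides with the $V^0$-norm of $(\Pi_\T v_{\D_m},\Pi_\Delta v_{\D_f})$) gives coercivity via Proposition \ref{proppoincarecont} and exact limit conformity $\mathcal{W}_\D(\q)=0$ by integration by parts, while consistency is obtained from nodal interpolation, classical ${\mathbb P}_1$ estimates on the shape-regular tetrahedral submesh, and density of $C^\infty_{\Omega}\times C^\infty_{\G}$ in $V^0$. The one divergence is your flagged ``main obstacle'': the paper disposes of the trace term $\|\gamma_\alpha(\Pi_\T P_{\D_m}u_m-u_m)\|_{L^2(\G_\alpha)}$ simply by applying the continuous trace inequality on the fixed domain $\O\sm\ov\G$ to the $H^1$ interpolation error (already of order $h_\T$), so no element-scaled trace inequality on the tetrahedra adjacent to $\G$ is needed.
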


\begin{proof}
The \textit{VAG-FE} scheme's reconstruction operators are conforming, i.e. $V_\D\subset V^0$. Therefore we deduce coercivity from Proposition \ref{proppoincarecont}. Furthermore we have by partial integration ${\cal W}_\D(\q_m,\q_f) = 0$ for all $(\q_m,\q_f)\in W$. Hence $(\D^l)_{l\in\N}$ is limit conforming.
\par To prove consistency, we need the following prerequisites. 
We define the linear mapping $P_{\D_m}\colon C_\Omega^\infty\to X_{\D_m}^0$ 
such that for all $\psi_m\in C_\O^\infty$ and any cell $K\in \cells$ one has  
\begin{align*}
\begin{array}{l@{\,\,}l@{\,\,}rl}
&(P_{\D_m}\psi_m)_K = \psi_m(\x_K),\\
&(P_{\D_m}\psi_m)_{\ov K_s} = \psi_m(\x_s)  \qquad &\forall s\in\nodes_K,\\
& (P_{\D_m}\psi_m)_{K_\sigma} = \psi_m(\x_\sigma) \qquad &\forall \sigma\in\faces_K\cap\faces_\G. 
\end{array}
\end{align*} 
Likewise, we define the linear mapping $P_{\D_f}\colon C_\G^\infty\to X_{\D_f}^0$ 
such that for all $\psi_f\in C_\G^\infty$ holds
$(P_{\D_f}\psi_f)_\nu = \psi_f(\x_\nu)$ for all $\nu\in dof_{\D_f}$. 
It follows from the classical Finite Element approximation theory and from 
the fact that the interpolation  
$\sum\limits_{s\in\nodes_\sigma}\beta_{\sigma,s} (P_{\D_m}\psi_m)_{\ov K_s}$ 
at the point $\x_\sigma$, $\sigma\in \faces_K\setminus\faces_\Gamma$ is exact 
on cellwise affine functions, that for all $(\psi_m,\psi_f)
\in C_\Omega^\infty\times C_\Gamma^\infty$ holds 
\begin{align}
\label{proofcclvagfe1}
\|\Pi_\T P_{\D_m}\psi_m - \psi_m\|_{H^1(\O\backslash\ov\G)} + \|\Pi_\Delta P_{\D_f}\psi_f - \psi_f\|_{H^1(\G)} 
\leq C(\psi_m,\psi_f,\theta_\T) h_\T. 
\end{align}
The trace inequality implies that for all $v\in H_{\del\O}^1(\O\backslash\ov\G)$ holds 
$$
\|\gamma_{\alpha} v\|_{L^2(\G_\alpha)}\leq C(\O\backslash\ov\G)\|v\|_{H^1(\O\backslash\ov\G)}\quad\text{for }\alpha\in\chi.
$$
We can then calculate for $(u_m,u_f)\in C_\Omega^\infty\times C_\Gamma^\infty$:
\begin{align*}
\mathcal S_\D(u_m,u_f)&\leq 
\sqrt{2}\|\Pi_\T P_{\D_m} u_m - u_m\|_{H^1(\O\backslash\ov\G)} 
+ \sum_{\alpha\in\chi}\|\gamma_\alpha (\Pi_\T P_{\D_m} u_m - u_m)\|_{L^2(\G_\alpha)}\\
 &+ \sum_{i\in I}\sqrt{8}\|\Pi_\Delta P_{\D_f} u_f  - u_f\|_{H^1(\G_i)}\\
 &\leq C(\O\backslash\ov\G,\#\chi,\# I,(u_m,u_f),\theta_\T) ~ h_\T.
\end{align*}
Since $C_\Omega^\infty\times C_\Gamma^\infty$ is dense in $V^0$, the sequence of \textit{VAG-FE} discretisations $(\D_m^l)_{l\in\N}$ is consistent if $h_{\T^l}\to 0$ and $\theta_{\T^l}$ is bounded for $l\to\infty$.
\hfill\qed
\end{proof}

\begin{proposition}
\label{propcclvagcv}
Let us consider a sequence of meshes $(\cells^l)_{l\in\N}$ and let us assume that the sequence $(\T^l)_{l\in\N}$ of tetrahedral submeshes is shape regular, i.e. $\theta_{\T^l}$ is uniformly bounded. We also assume that $\lim_{l \rightarrow \infty} {h_{\T^l}} = 0.$ Then, any corresponding sequence of gradient discretizations $(\D^l)_{l\in\N}$, defined by \eqref{XVag}, \eqref{gradVag}, \eqref{PiVagCV}, is coercive, consistent and limit conforming.
\end{proposition}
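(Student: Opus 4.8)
The plan is to deduce the result for the \textit{VAG-CV} scheme from the already established properties of the \textit{VAG-FE} scheme (Proposition \ref{propcclvagfe}) by invoking Lemma \ref{lemmakonstantin}. Both schemes are built on the same space of degrees of freedom $X_\D^0$ given by \eqref{XVag} and share the same discrete gradients $\grad_{\D_m}$ and $\grad_{\D_f}$ given by \eqref{gradVag}; they differ only in their reconstruction operators, the \textit{VAG-FE} operators \eqref{PiVagFE} being the piecewise affine interpolants and the \textit{VAG-CV} operators \eqref{PiVagCV} being piecewise constant. I therefore set $\D^l$ to be the \textit{VAG-FE} discretization and $\ov\D^l$ to be the \textit{VAG-CV} discretization, fix $l$, and verify the two hypotheses \eqref{eqlemmakonstantin0} and \eqref{eqlemmakonstantin1} of Lemma \ref{lemmakonstantin}.

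For hypothesis \eqref{eqlemmakonstantin1} I would argue locally on each triangle $T = T_{\sigma,s,s'}\in\Delta$. Writing $a = u_{K_\sigma} - u_\sigma$, $b = u_{\ov K_s} - u_s$ and $c = u_{\ov K_{s'}} - u_{s'}$ for the nodal differences on the side $\alpha = \alpha(K_\sigma)$, the \textit{VAG-FE} jump $\gamma_\alpha\Pi_\T v_{\D_m} - \Pi_\Delta v_{\D_f}$ is affine on $T$ with these nodal values, while the \textit{VAG-CV} jump $\ov\Pi_{\D_m}^\alpha v_{\D_m} - \wt{\ov\Pi}_{\D_f} v_{\D_f}$ is piecewise constant, equal to $a,b,c$ on the three equal-area pieces $T_1,T_2,T_3$. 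Using the standard $P_1$ mass matrix and $|T_i| = \frac{1}{3}|T|$ one finds
\begin{align*}
\|\gamma_\alpha\Pi_\T v_{\D_m} - \Pi_\Delta v_{\D_f}\|_{L^2(T)}^2 &= \frac{|T|}{6}\(a^2+b^2+c^2+ab+bc+ca\),\\
\|\ov\Pi_{\D_m}^\alpha v_{\D_m} - \wt{\ov\Pi}_{\D_f} v_{\D_f}\|_{L^2(T)}^2 &= \frac{|T|}{3}\(a^2+b^2+c^2\),
\end{align*}
so that the elementary inequality $ab+bc+ca\leq a^2+b^2+c^2$ shows the \textit{VAG-FE} contribution to be bounded by the \textit{VAG-CV} one on each $T$. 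Summing over the triangles of each $\G_\alpha$ and then over $\alpha\in\chi$ gives \eqref{eqlemmakonstantin1} with $C=1$.

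For hypothesis \eqref{eqlemmakonstantin0} I would bound each term of $\zeta_{\D^l,\ov\D^l}$ by a power of $h_{\T^l}$ times $\|v_\D\|_\D$. The point is that on every simplex the affine reconstruction $\Pi_\T v_{\D_m}$ (resp. $\Pi_\Delta v_{\D_f}$) coincides at each grid point $\x_\nu$ with the constant value $v_\nu$ used by the \textit{VAG-CV} operator on the associated control volume, and that all control volumes have diameter $\lesssim h_\T$. A first order Taylor/Poincar\'e estimate on the tetrahedra then gives
$$
\|\Pi_{\D_m} v_{\D_m} - \ov\Pi_{\D_m} v_{\D_m}\|_{L^2(\O)} \leq C\,h_\T\,\|\grad_{\D_m} v_{\D_m}\|_{L^2(\O)},
$$
and likewise the two fracture function contributions $\|\Pi_{\D_f} v_{\D_f} - \ov\Pi_{\D_f} v_{\D_f}\|_{L^2(\G)}$ and $\|\wt\Pi_{\D_f} v_{\D_f} - \wt{\ov\Pi}_{\D_f} v_{\D_f}\|_{L^2(\G)}$ are bounded by $C\,h_\T\,\|\grad_{\D_f} v_{\D_f}\|_{L^2(\G)}$. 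Since $\|\grad_{\D_m} v_{\D_m}\|_{L^2(\O)}$ and $\|\grad_{\D_f} v_{\D_f}\|_{L^2(\G)}$ are both controlled by $\|v_\D\|_\D$, these three contributions are $O(h_{\T^l})$.

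The step I expect to be the main obstacle is the trace contribution $\sum_{\alpha}\|\Pi_{\D_m}^\alpha v_{\D_m} - \ov\Pi_{\D_m}^\alpha v_{\D_m}\|_{L^2(\G_\alpha)}$, because the difference of the affine trace and its piecewise constant counterpart on a triangle $T$, a face of the tetrahedron $D=D_{K,\sigma,e}\in\T$, is controlled by $h_D$ times the \emph{tangential} gradient of $\gamma_\alpha\Pi_\T v_{\D_m}$, a $(d-1)$-dimensional quantity, whereas $\|v_\D\|_\D$ only contains the full $d$-dimensional gradient $\grad_{\D_m} v_{\D_m}$. To bridge this I would use a discrete scaling inequality on $D$: by shape regularity $\|\grad_\tau\gamma_\alpha\Pi_\T v_{\D_m}\|_{L^2(T)}^2 \lesssim h_D^{-1}\|\grad_{\D_m} v_{\D_m}\|_{L^2(D)}^2$, which combined with the local estimate $\|\Pi_{\D_m}^\alpha v_{\D_m} - \ov\Pi_{\D_m}^\alpha v_{\D_m}\|_{L^2(T)}^2 \lesssim h_D^2\|\grad_\tau\gamma_\alpha\Pi_\T v_{\D_m}\|_{L^2(T)}^2$ and summation over the tetrahedra adjacent to the fracture network yields
$$
\sum_{\alpha}\|\Pi_{\D_m}^\alpha v_{\D_m} - \ov\Pi_{\D_m}^\alpha v_{\D_m}\|_{L^2(\G_\alpha)} \leq C\,\sqrt{h_\T}\,\|v_\D\|_\D.
$$
Hence $\zeta_{\D^l,\ov\D^l} = O(\sqrt{h_{\T^l}}) \to 0$, the weaker exponent being harmless here. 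With both hypotheses of Lemma \ref{lemmakonstantin} verified, it delivers coercivity, consistency and limit conformity of $(\ov\D^l)_{l\in\N}$, that is, of the \textit{VAG-CV} scheme.
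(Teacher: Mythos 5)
Your proof is correct and follows essentially the same route as the paper's: both reduce the VAG-CV case to the VAG-FE result of Proposition \ref{propcclvagfe} via Lemma \ref{lemmakonstantin}, verifying \eqref{eqlemmakonstantin1} by a local comparison of the affine and piecewise-constant jumps on each triangle $T_{\sigma,e}$ (the paper bounds the affine jump by its sup, attained at a vertex, obtaining the constant $3$ in squared norms, while your exact mass-matrix computation sharpens this to $C=1$), and verifying \eqref{eqlemmakonstantin0} with $O(h_\T)$ bounds for the function reconstructions and an $O(h_\T^{1/2})$ bound for the trace term. Your per-tetrahedron scaling argument for that last term is a valid variant of the paper's bookkeeping, which instead uses $|\sigma| \approx |K|/h_K$ and the discrete gradient summed over the cell, and both give $\zeta_{\D^l,\ov\D^l} = O(h_{\T^l}^{1/2}) \to 0$.
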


\begin{proof}
We combine Lemma \ref{lemmakonstantin} and Proposition \ref{propcclvagfe}. Thus, we have to show that the assumptions of Lemma 
\ref{lemmakonstantin} are satisfied, where $(\overline\D^l)_{l\in\N}$ corresponds to 
the sequence of \textit{VAG-CV} gradient discretisations 
and $(\D^l)_{l\in\N}$ to the corresponding sequence of \textit{VAG-FE} gradient discretisations. 
\par For the following, we define $\faces^\alpha = \bigcup_{i\in I_\alpha}\faces_{\G_i}$ and $\nodes^\alpha 
= \bigcup_{\sigma\in\faces^\alpha}\nodes_\sigma$.
To ease the notation in the proof, we will use, for $\alpha\in\chi$, the uniquely identified mapping $\mu^\alpha\colon\nodes^\alpha\cup\faces^\alpha\subset\doff\to\dofm$, 
defined by $\mu^\alpha(\sigma) = K_\sigma$ (such that $\chi(K_\sigma) = \{\alpha\}$) 
and $\mu^\alpha(s) = \ov K_s$ (for a cell $K$ such that $K\in \cells_\sigma$ with $\sigma\in \faces^\alpha\cap\faces_s$ 
and $\chi(K_\sigma) = \{\alpha\}$).
Let now $\alpha\in\chi$ be fixed. Since the mesh is conforming with respect to the fracture network, 
there is for every $\sigma\in\faces^\alpha$, $e=ss'\in \edges_\sigma$ a $\nu(\sigma,e)\in\{\sigma,s,s'\}$, 
such that
$$
\sup_{\x\in T_{\sigma,e}} |(\Pi_{\D_m}^\alpha v_{\D_m} - \wt\Pi_{\D_f} v_{\D_f})(\x) | = |(\Pi_{\D_m}^\alpha v_{\D_m} - \wt\Pi_{\D_f} v_{\D_f})(\x_{\nu(\sigma,e)}) |
= |v_{\mu^\alpha(\nu(\sigma,e))} - v_{\nu(\sigma,e)} |.
$$
Then we have
\begin{align*}
\| \Pi_{\D_m}^\alpha v_{\D_m} - \wt \Pi_{\D_f} v_{\D_f} \|^2_{L^2(\G_\alpha)}
&\leq \sum_{\sigma\in\faces^\alpha}\sum_{e\in\edges_\sigma}
|T_{\sigma,e} |  |v_{\mu^\alpha(\nu(\sigma,e))} - v_{\nu(\sigma,e)} |^2\\
&\leq 3 
\| \ov\Pi_{\D_m}^\alpha v_{\D_m} - \wt{\ov\Pi}_{\D_f} v_{\D_f} \|^2_{L^2(\G_\alpha)}.
\end{align*}
We have to check \eqref{eqlemmakonstantin0} now. It can be verified that \cite{BM2013}, Lemma 3.4 applies to our case, both, in the matrix domain, where face unknowns might occur, as well as in the fracture network, a domain of codimension 1. This means that we can state that there exist constants $C_m(\theta_\T), C_f(\theta_\T) > 0$, such that
\begin{align}
\|\Pi_{\D_m} u_{\D_m} - \ov\Pi_{\D_m} v_{\D_m}\|_{L^2(\O)} &\leq C_m\cdot h_\T\cdot \|\grad_{\D_m}v_{\D_m}\|_{{L^2(\O)}^d}\quad\text{and} \label{proofcclvagcv1}\\
\|\wt \Pi_{\D_f} v_{\D_f} - \wt {\ov\Pi}_{\D_f} v_{\D_f}\|_{L^2(\G)} = \|\Pi_{\D_f} v_{\D_f} - \ov\Pi_{\D_f} v_{\D_f}\|_{L^2(\G)} &\leq C_f\cdot h_\Delta\cdot \|\grad_{\D_f}v_{\D_f}\|_{{L^2(\G)}^{d-1}} \label{proofcclvagcv2}
\end{align}
For the following calculation we take into account \cite{BM2013}, Lemmata 3.2 and 3.4. We also use that the mesh is conforming with respect to the fracture network and that for $\sigma\in\faces$ and $K\in\cells_\sigma$ (or equivalently for $K\in\cells,\  \sigma\in\faces_K$) holds: $h_K$ is asymptotically equivalent to $h_\sigma$ and $|K|$ is asymptotically equivalent to $h_\sigma |\sigma|$, where $h_K := \max_{\T\ni D\subset K}h_D$ and $h_\sigma := \max_{\Delta\ni T\subset\sigma}h_\T$. Let $\alpha\in\chi,\ \sigma\in\faces^\alpha$ and $K\in\cells_\sigma$, such that $\chi(K_\sigma) = \{\alpha\}$.
Then we have
\begin{align*}
\|\Pi_{\D_m}^\alpha v_{\D_m} &-\ov\Pi_{\D_m}^\alpha v_{\D_m}\|_{L^2(\sigma)}^2
= \|\sum_{\nu\in\{\sigma\}\cup(\nodes_\sigma)} v_{\mu^\alpha(\nu)}(\Pi_{\D_f} \mathfrak e_\nu - \ov\Pi_{\D_f}\mathfrak e_\nu)\|_{L^2(\sigma)}^2  \\
&\leq C \cdot |\sigma|\sum_{s\in\nodes_\sigma}(v_{\mu^\alpha(s)} - v_{\mu^\alpha(\sigma)})^2\\
&\leq C \cdot \frac{|K|}{|h_K|}\cdot \(\sum_{s\in\nodes_K}(v_{\ov K_s} - v_{K})^2 + \sum_{\sigma\in\faces_K\cap\faces_\G}(v_{K_\sigma} - v_K)^2\)
\leq C \cdot h_\sigma\cdot \|\grad_{\D_m}v_{\D_m}\|_{{L^2(K)}^d}^2.
\end{align*}
Therefore
\begin{align}
\label{proofcclvagcv_0}
\|\Pi_{\D_m}^\alpha v_{\D_m} -\ov\Pi_{\D_m}^\alpha v_{\D_m}\|^2_{L^2(\G_\alpha)} 
\leq \sum_{\sigma\in\faces^\alpha}\|\Pi_{\D_m}^\alpha v_{\D_m} -\ov\Pi_{\D_m}^\alpha v_{\D_m}\|^2_{L^2(\sigma)}
\leq C\cdot h_\Delta \cdot\|\grad_{\D_m}v_{\D_m}\|^2_{{L^2(\O)}^d}.
\end{align}
Altogether we obtain
\begin{align*}
\|\Pi_{\D_m} v_{\D_m} &- \ov\Pi_{\D_m} v_{\D_m}\|_{L^2(\O)} 
+ \|\Pi_{\D_f} v_{\D_f} - \ov\Pi_{\D_f} v_{\D_f}\|_{L^2(\G)}
+  \|\wt \Pi_{\D_f} v_{\D_f} - \wt {\ov\Pi}_{\D_f} v_{\D_f}\|_{L^2(\G)} \\
&+ \sum_{\alpha\in\chi}\|\Pi_{\D_m}^\alpha v_{\D_m} -\ov\Pi_{\D_m}^\alpha v_{\D_m}\|_{L^2(\G_\alpha)}
\leq C \cdot (h_\T + h_\Delta + h_\Delta^{\frac{1}{2}})\cdot\| (v_{\D_m},v_{\D_f})\|_\D,
\end{align*}
with a constant $C$ depending only on $\#\chi$ and $\theta_\T$. This proves that \eqref{eqlemmakonstantin0} is satisfied.
\hfill\qed
\end{proof}

\begin{corollary}
The precedent proof shows that $\mathcal{S}_\D(u_m,u_f) = \mathcal{O}(h_\T^{1\over 2})$ for $(u_m,u_f)\in C_\O^\infty\times C_\G^\infty$ 
and that $\mathcal{W}_\D(\q_m,\q_f) = \mathcal{O}(h_\T^{\frac{1}{2}})$ for $(\q_m,\q_f)\in C_{W_m}^\infty\times C_{W_f}^\infty$. 
However, we can prove a higher order of convergence, i.e. 
$\mathcal{W}_\D(\q_m,\q_f) = \mathcal{O}(h_\T)$ for $(\q_m,\q_f)\in C_{W_m}^\infty\times C_{W_f}^\infty$ and 
$\mathcal{S}_\D(u_m,u_f) = \mathcal{O}(h_\T)$ for $(u_m,u_f)\in C_\O^\infty\times C_\G^\infty$.  

\end{corollary}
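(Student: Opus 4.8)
The plan is to bypass the indirect comparison argument of Proposition \ref{propcclvagcv}, in which the half power $h_\Delta^{1/2}$ enters \emph{solely} through the quantity $\zeta_{\D,\ov\D}$ via the matrix-trace estimate \eqref{proofcclvagcv_0}. I would treat consistency and limit conformity separately: the consistency estimate $\mathcal{S}_\D(u_m,u_f) = \mathcal{O}(h_\T)$ can be obtained \emph{directly} on the VAG-CV discretization, whereas for limit conformity I would keep the comparison with the conforming VAG-FE scheme (for which $w_\D \equiv 0$) but refine the bound on the matrix-trace correction term.

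For consistency, fix $(u_m,u_f) \in C_\O^\infty \times C_\G^\infty$ and test $\mathcal{S}_{\D}$ with the nodal interpolant $v_\D = (P_{\D_m}u_m, P_{\D_f}u_f)$ from the proof of Proposition \ref{propcclvagfe}. The two gradient terms coincide with the VAG-FE ones and are $\mathcal{O}(h_\T)$ by \eqref{proofcclvagfe1}. For the piecewise-constant reconstructions $\Pi_{\D_m}, \Pi_{\D_f}, \wt\Pi_{\D_f}$ of \eqref{PiVagCV}, each control volume carries the nodal value of the corresponding smooth field, so on each piece the integrand is bounded by $\|\nabla u_m\|_\infty$ (resp. the tangential derivatives) times the local diameter, giving $\mathcal{O}(h_\T)$ in $L^2$. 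The decisive point is the matrix-trace term $\sum_\alpha \|\Pi_{\D_m}^\alpha v_{\D_m} - \gamma_\alpha u_m\|_{L^2(\G_\alpha)}$: on each sub-triangle $T_{\sigma,e}$ with $e = ss'$ the operator $\Pi_{\D_m}^\alpha$ takes the one-sided nodal traces $\gamma_\alpha u_m(\x_\sigma), \gamma_\alpha u_m(\x_s), \gamma_\alpha u_m(\x_{s'})$, all attached to points lying on $\G_\alpha$, and comparing with the \emph{smooth} restriction $\gamma_\alpha u_m$ costs only $\|\nabla_\tau \gamma_\alpha u_m\|_\infty\, h_\Delta$ pointwise, hence $\mathcal{O}(h_\T)$ after integration. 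No dimensional mismatch arises here because the target is the smooth trace itself, which is exactly what the indirect route through $\zeta_{\D,\ov\D}$ obscured.

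For limit conformity, I would use $w_\D \equiv 0$ for VAG-FE to write $w_{\ov\D}(v_\D,\q) = w_{\ov\D}(v_\D,\q) - w_\D(v_\D,\q)$, so that only differences of reconstruction operators survive. The volume corrections and the fracture-to-fracture corrections $\wt{\ov\Pi}_{\D_f} - \wt\Pi_{\D_f}$, $\ov\Pi_{\D_f} - \Pi_{\D_f}$ are already $\mathcal{O}(h_\T)$ by \eqref{proofcclvagcv1}--\eqref{proofcclvagcv2}, since they compare codimension-matching quantities. The one remaining term is
$$\sum_{\alpha\in\chi}\int_{\G_\alpha}\gamma_{\n,\alpha}\q_m\,(\ov\Pi_{\D_m}^\alpha - \Pi_{\D_m}^\alpha)v_{\D_m}\,\d\tau(\x),$$
the source of the $h_\Delta^{1/2}$. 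Here I would exploit the identity
$$\int_{T_{\sigma,e}}(\ov\Pi_{\D_m}^\alpha v_{\D_m} - \Pi_{\D_m}^\alpha v_{\D_m})\,\d\tau(\x) = 0,$$
which holds because $\Pi_{\D_m}^\alpha v_{\D_m} = \gamma_\alpha\Pi_\T v_{\D_m}$ is affine on $T_{\sigma,e}$ with vertex values $v_{K_\sigma}, v_{\ov K_s}, v_{\ov K_{s'}}$, while $\ov\Pi_{\D_m}^\alpha v_{\D_m}$ takes these same values on the equal-measure pieces $T_1, T_2, T_3$ with $|T_i| = \tfrac{1}{3}|T_{\sigma,e}|$. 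This mean-zero property lets me subtract, on each $T_{\sigma,e}$, the average of the smooth factor $\gamma_{\n,\alpha}\q_m$, apply Poincar\'e--Wirtinger to gain one factor $h_\Delta\|\nabla_\tau\gamma_{\n,\alpha}\q_m\|_{L^2(T_{\sigma,e})}$, and conclude with Cauchy--Schwarz over the triangulation together with \eqref{proofcclvagcv_0}. The resulting bound is $C(\q)\,h_\Delta^{3/2}\|v_\D\|_\D$, in particular $\mathcal{O}(h_\T)$.

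The main obstacle is precisely this matrix-trace term: the naive Cauchy--Schwarz estimate used in Proposition \ref{propcclvagcv} loses half a power because the codimension-one trace difference is controlled by the codimension-zero matrix gradient. The resolution hinges on the quadrature choice $|T_1| = |T_2| = |T_3| = \tfrac{1}{3}|T_{\sigma,e}|$ from \eqref{PiVagCV}, introduced exactly to enforce the mean-zero identity above; Poincar\'e--Wirtinger against the smooth test field then transfers the missing half power onto $\q$, where it is harmless, and restores the full first-order rate.
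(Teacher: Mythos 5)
Your proposal is correct and takes essentially the same approach as the paper: consistency is proved directly by plugging the nodal interpolants $P_{\D_m}u_m$, $P_{\D_f}u_f$ into $\mathcal S_\D$ and bounding every reconstruction error by $\mathcal O(h_\T)$, while limit conformity uses the conformity of VAG-FE to reduce $w_{\ov\D}$ to differences of reconstruction operators, the mean-zero identity $\int_{T}\bigl(\Pi_{\D_m}^\alpha v_{\D_m}-\ov\Pi_{\D_m}^\alpha v_{\D_m}\bigr)\,\d\tau(\x)=0$ on each $T\in\Delta$ (guaranteed by the equal-measure thirds $|T_1|=|T_2|=|T_3|$), and the subtraction of the triangle averages of $\gamma_{\n,\alpha}\q_m$ (the paper's projection $P$, your Poincar\'e--Wirtinger step) combined with \eqref{proofcclvagcv_0} to obtain the $h_\T^{3/2}$ bound on the trace term. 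The only differences are cosmetic: you spell out why the mean-zero identity holds, which the paper merely asserts.
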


\begin{proof}
\noindent\textit{Consistency: }
Classically, for all $(\varphi_m,\varphi_f)\in C_\O^\infty\times C_\G^\infty$, 
we have the estimate 
\begin{align*}
&\|\Pi_{\D_m} P_{\D_m}\varphi_m - \varphi_m\|_{L^2(\O)}
+ \|\Pi_{\D_m}^\alpha P_{\D_m}\varphi_m - \trace_\alpha\varphi_m\|_{L^2(\G_\alpha)} \\
& + \|\Pi_{\D_f} P_{\D_f}\varphi_f - \varphi_f\|_{L^2(\G)}+ \|\wt \Pi_{\D_f} P_{\D_f}\varphi_f - \varphi_f\|_{L^2(\G)}
\leq cst(\varphi_m,\varphi_f)\cdot h_\T,
\end{align*}
while \eqref{proofcclvagfe1} grants that holds
$$
\|\nabla_{\D_m}P_{\D_m}\varphi_m - \nabla\varphi\|_{L^2(\O)}+\|\nabla_{\D_f}P_{\D_f}\varphi_f - \nabla\varphi\|_{L^2(\Gamma)} 
\leq cst(\varphi_m,\varphi_f,\theta_{\T}) h_{\T}. 
$$
Taking into account that $C^\infty_{\O}\times C^\infty_{\G}$ is dense in $V$, 
we see that the treated discretisation is consistent with 
$\mathcal{S}_\D(\varphi_m,\varphi_f) = \mathcal{O}(h_\T)$ for $(\varphi_m,\varphi_f)\in C_\O^\infty\times C_\G^\infty$.\\

\noindent\textit{Limit Conformity: }
For all $T \in\Delta$ and 
for all $u_{\D_m}\in X_{\D_m}$ we have that 
$$
\int_{T}(\Pi_{\D_m}^\alpha u_{\D_m} - \ov\Pi_{\D_m}^\alpha u_{\D_m}) \d\tau(\x) = 0.
$$
Introducing the linear operator $P: L^2(\G_\alpha) \rightarrow L^2(\G_\alpha)$ such that 
$P(\varphi) = \frac{1}{|T|}\int_T\varphi \d\tau(\x)$ 
on $T$ for all $T\in\Delta$, we first calculate for any $\q_m\in C^\infty_{W_m}$
\begin{align*}
\|\trace_{\n,\alpha}\q_m - P(\trace_{\n,\alpha}\q_m)\|^2_{L^2(\Gamma_\alpha)} = 
\sum_{\sigma\in \faces_\alpha}\sum_{\Delta\ni T\subset\sigma}\|\trace_{\n,\alpha}\q_m - P(\trace_{\n,\alpha}\q_m)\|^2_{L^2(T)} 
\leq  C(\q_m,\theta_\T) \cdot h^2_\T.
\end{align*}
We proceed:
\begin{align*}
|\int_{\G_\alpha}\trace_{\n,\alpha}\q_m (\Pi_{\D_m}^\alpha u_{\D_m} &- \ov\Pi_{\D_m}^\alpha u_{\D_m}) \d\tau(\x)| \\
& = |\int_{\G_\alpha}(\trace_{\n,\alpha}\q_m - P(\trace_{\n,\alpha}\q_m)) (\Pi_{\D_m}^\alpha u_{\D_m} - \ov\Pi_{\D_m}^\alpha u_{\D_m}) \d\tau(\x)|\\
&\leq\|\trace_{\n,\alpha}\q_m - P(\trace_{\n,\alpha}\q_m)\|_{L^2(\Gamma_\alpha)} \|\Pi_{\D_m}^\alpha u_{\D_m} - \ov\Pi_{\D_m}^\alpha u_{\D_m}\|_{L^2(\Gamma_\alpha)}\\
&\leq C(\q_m,\theta_\T) h_\T^{\frac{3}{2}}\|\grad_{\D_m}u_{\D_m}\|_{L^2(\O)}
\end{align*}
for all $\q_m\in C^\infty_{W_m}$, where we have used \eqref{proofcclvagcv_0} in the last inequality.
We can now conclude by calculating for all for $\q = (\q_m,\q_f)\in C_{W_m}^\infty\times C_{W_f}^\infty$
\begin{align*}
w_{\ov\D}(u_\D,\q) &= (w_{\ov\D} - w_\D)(u_\D,\q) \\
&= \int_\O \div\q_m (\ov\Pi_{\D_m} - \Pi_{\D_m}) u_{\D_m} \d\x 
+\int_{\G} \div_\tau\q_f (\ov\Pi_{\D_f}- \Pi_{\D_f}) u_{\D_f} \d\tau(\x) \\
&+\sum_{\alpha\in\chi}\int_{\G_\alpha} \gamma_{\n,\alpha} \q_m \(
(\wt{\ov\Pi}_{\D_f}  - \wt \Pi_{\D_f}) u_{\D_f} - (\ov\Pi_{\D_f} - \Pi_{\D_f}) u_{\D_f} 
- (\ov\Pi_{\D_m}^\alpha - \Pi_{\D_m}^\alpha) u_{\D_m} 
\) \d\tau(\x)\\
&\leq\|\Pi_{\D_m} u_{\D_m} - \ov\Pi_{\D_m} u_{\D_m}\|_{L^2(\O)} 
\cdot\|\div\q_m\|_{L^2(\O)}\\
&+ \|\Pi_{\D_f} u_{\D_f} - \ov\Pi_{\D_f} u_{\D_f}\|_{L^2(\G)} \cdot\|\div_\tau\q_f\|_{L^2(\G)}
 +  \sum_{\alpha\in\chi}\( (\|\wt \Pi_{\D_f} u_{\D_f} - \wt {\ov\Pi}_{\D_f} u_{\D_f}\|_{L^2(\G_\alpha)}  \\
&+ \|\wt \Pi_{\D_f} u_{\D_f} - \wt {\ov\Pi}_{\D_f} u_{\D_f}\|_{L^2(\G_\alpha)} )
\cdot\|\trace_{\n,\alpha}\q_m\|_{L^2(\G_\alpha)} \\
&+ \int_{\G_\alpha}\trace_{\n,\alpha}\q_m (\Pi_{\D_m}^\alpha u_{\D_m} - \ov\Pi_{\D_m}^\alpha u_{\D_m}) \d\tau(\x)\)
\leq C(\theta_\T,\q)\cdot h_\T\cdot\|u_\D\|_\D,
\end{align*}
where we have taken into account the conformity of $\D$ in the first equation and \eqref{proofcclvagcv1}, \eqref{proofcclvagcv2} in the last inequality.
\hfill\qed
\end{proof}

\begin{remark}
The proofs of Propositions \ref{propcclvagfe} and \ref{propcclvagcv} show that for solutions $(u_m,u_f)\in V^0$ and $(\q_m,\q_f)\in W$ of \eqref{modeleCont} 
such that $u_m\in C^2(\ov K)$, $u_f\in C^2(\ov \sigma)$, $\q_m \in (C^1(\ov K))^d$, $\q_f \in (C^1(\ov \sigma))^{d-1}$ for all 
$K\in \cells$ and all $\sigma\in \Gamma_f$, 
the VAG schemes are consistent and limit conforming of order 1, 
and therefore convergent of order 1.
\end{remark}

\subsection{Hybrid Finite Volume Discretization}
\label{sec_HFV}
In this subsection, the HFV scheme introduced in \cite{EGH09} is extended to the hybrid 
dimensional Darcy flow model. 
We assume here that the faces are planar and that $\x_\sigma$ is the barycenter of $\sigma$ for all $\sigma\in\faces$.\\

\noindent The set of indices $\dofm\times\doff$ for the unknowns is defined by (cf. figure \ref{figureDofHFV})
\begin{align*}
\dofm & = \cells\cup\(\bigcup_{\sigma\in\faces}\ov\cells_\sigma\)\\
\doff  &= \faces_\G\cup\edges_\G,\\
\dofDirm &= \faces_{ext},\\
\dofDirf &= \edges_\Gamma\cap \edges_{ext},
\end{align*}
where for $\sigma\in\faces$ and $K\in\cells_\sigma$
$$
\ov K_\sigma = \left\{
\begin{array}{l l}
\cells_\sigma&\quad\text{ if }\sigma\in\faces\sm\faces_\G\\
\{K\}&\quad\text{ if }\sigma\in\faces_\G.
\end{array}\right.
$$ 
and $\ov\cells_\sigma = \{\ov K_\sigma\mid K\in\cells_\sigma\}$.
We thus have
\begin{align}
\begin{aligned}
\label{XHFV}
X_{\D_m} &= \Bigl\{u_K\bigm | K\in\cells\Bigr\}\cup\Bigl\{u_{\ov K_\sigma}
\bigm |\sigma\in\faces_\G, \ov K_\sigma\in\ov \cells_\sigma\Bigr\},\\
X_{\D_f} &= \Bigl\{u_\sigma\bigm | \sigma\in\faces_\G\Bigr\}\cup\Bigl\{u_e\bigm |  e\in\edges_\G\Bigr\}.
\end{aligned}
\end{align}

The discrete gradients in the matrix (respectively in the fracture domain)
are defined in each cell (respectively in each face) by the 3D (respectively 2D) 
discrete gradients 
\begin{align}
\label{gradHFV}
\grad_{\D_m}\ (\text{resp. }\grad_{\D_f})\text{ as proposed in \cite{EGH09}, pp. 8-9.}
\end{align}

The function reconstruction operators are piecewise constant 
on a partition of the cells and of the fracture faces. 

\begin{figure}
  \begin{minipage}[c]{0.3\textwidth}
\caption{
Cell $K$ touching a fracture face $\sigma$. Illustration of the polyhedron  
and polygone on which:
\newline Red: $\grad_{\D_m}$ is constant.
\newline Grey: $\grad_{\D_f}$ is constant.
}
\label{figureDofHFV}
  \end{minipage}\hfill
    \begin{minipage}[c]{0.6\textwidth}
    \includegraphics[width=0.6\textwidth]{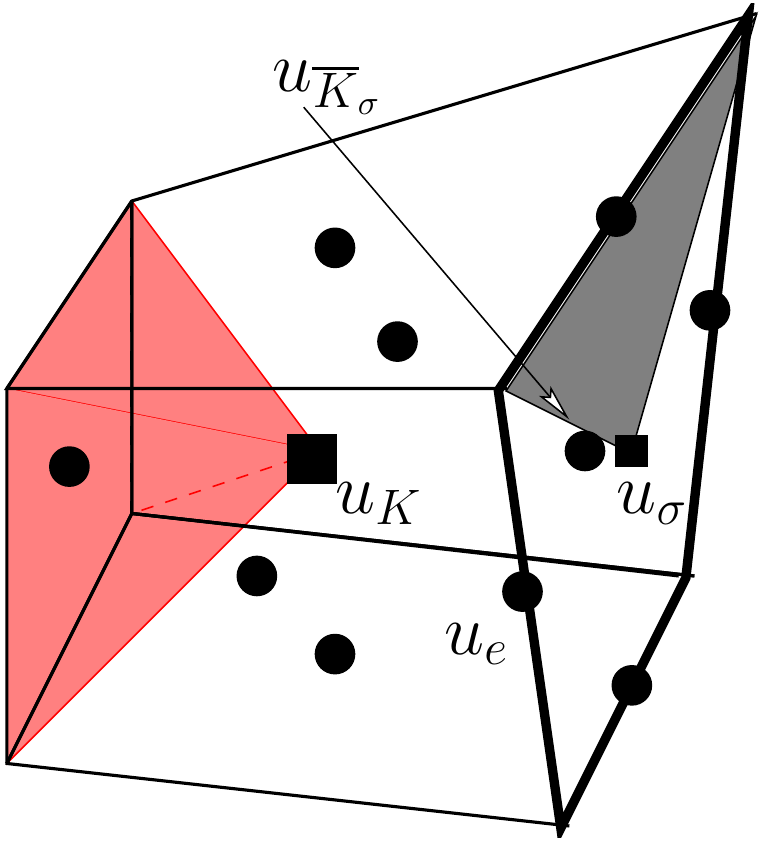}
  \end{minipage}
\end{figure}

\noindent These partitions are respectively denoted, for all $K\in\cells$,  by 
$$
\ov K ~ =  ~\ov\omega_K ~ \cup ~ \( \bigcup_{\sigma\in\faces_K\setminus\faces_{ext}}\ov\omega_{K,\ov K_\sigma} \), 
$$
and, for all $\sigma\in\faces_\G$, by 
$$
\ov\sigma~=~\ov\omega_\sigma~\cup ~ \( \bigcup_{e\in\edges_\sigma\setminus\edges_{ext}}\ov\omega_{\sigma,e} \). 
$$
With each $\sigma\in\faces\setminus\faces_{ext}$ and $\ov K_\sigma\in\ov\cells_\sigma$ we associate an open set 
$\omega_{\ov K_\sigma}$, s.t.
$$
\ov\omega_{\ov K_\sigma} = \bigcup_{K\in\ov K_\sigma}\ov\omega_{K,\ov K_\sigma}.
$$
Similarly, for all $e\in\edges_\G\setminus\edges_{ext}$ we define $\omega_e$ by
$$
\ov\omega_{e} = \bigcup_{\sigma\in\faces_e\cap\faces_\G}\ov\omega_{\sigma,e}.
$$
We obtain the partitions 
$\ov\Omega = \(\bigcup_{\nu\in\dofm\setminus\dofDirm}\ov\omega_\nu\)\quad \ov\Gamma = \(\bigcup_{\nu\in\doff\setminus\dofDirf}\ov\omega_\nu\).
$\\

We also need a mapping between the degrees of freedom of the matrix domain, which are situated on one side of the fracture network, and the set of indices $\chi$. For $\sigma\in\faces_\G$ and $\ov K_\sigma\in\ov\cells_\sigma$ holds by definition $\ov K_\sigma = \{K\}$ for a $K\in\cells_\sigma$ and hence $\n_{\ov K_\sigma} = \n_{K,\sigma}$ is well defined. We obtain the one-element set $\chi(\ov K_\sigma) = \{\alpha\in\chi\mid \n_{\ov K_\sigma} = \n_\alpha\text{ on }\sigma\}$ and therefore the notation $\alpha(\ov K_\sigma) = \alpha\in\chi(\ov K_\sigma)$.

We define the HFV scheme's reconstruction operators by
\begin{align}
\begin{aligned}
\label{PiHFV}
&\bullet\ \Pi_{\D_m} u_{\D_m} =\sum_{\nu\in\dofm\setminus\dofDirm}u_\nu\mathbb 1_{\omega_\nu} ,\\
&\bullet\ \Pi_{\D_f} u_{\D_f}= \sum_{\nu\in\doff\setminus\dofDirf}u_\nu\mathbb 1_{\omega_\nu},\\
&\bullet\ \wt\Pi_{\D_f} u_{\D_f}= \sum_{\sigma\in\faces_\G}u_\sigma\mathbb 1_{\sigma},\\
&\bullet\ \Pi^\alpha_{\D_m}u_{\D_m}  = \sum_{\sigma\in\faces_\G}\sum_{\ov K_\sigma\in\ov\cells_\sigma}\delta_{\alpha(\ov K_\sigma)\alpha}u_{\ov K_\sigma}\mathbb 1_{\sigma}\quad\text{ for all }\alpha\in\chi.
&\end{aligned}
\end{align}

\begin{proposition}
Let us consider a sequence of meshes $(\cells^l)_{l\in\N}$ and let us assume that the sequence $(\T^l)_{l\in\N}$ of tetrahedral submeshes is shape regular, i.e. $\theta_{\T^l}$ is uniformly bounded. We also assume that $\lim_{l \rightarrow \infty} {h_{\T^l}} = 0.$ Then, any corresponding sequence of gradient discretizations $(\D^l)_{l\in\N}$, defined by \eqref{XHFV}, \eqref{gradHFV} and definition \eqref{PiHFV}, is coercive, consistent and limit conforming.
\end{proposition}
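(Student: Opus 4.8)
The plan is to verify the three properties directly for the HFV reconstructions, using that, cell by cell in the matrix and face by face in the fracture network, $\grad_{\D_m}$, $\grad_{\D_f}$ and the piecewise constant operators $\Pi_{\D_m},\Pi_{\D_f}$ coincide with the operators of \cite{EGH09}, for which coercivity, consistency and limit conformity on a single domain are established. The only genuinely new ingredient is the matrix--fracture coupling, carried by the trace reconstructions $\Pi^\alpha_{\D_m}$ and by the interface term $\sum_{\alpha\in\chi}\|\Pi^\alpha_{\D_m}v_{\D_m}-\wt\Pi_{\D_f}v_{\D_f}\|_{L^2(\G_\alpha)}$ entering $\|\cdot\|_\D$. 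For consistency and limit conformity I would first prove an $\mathcal O(h_\T)$ bound against smooth arguments, exactly as for the VAG schemes, and then pass to arbitrary data by density: the density of $C^\infty_\Omega\times C^\infty_\G$ in $V^0$ and of $C^\infty_{W_m}\times C^\infty_{W_f}$ in $W$ proved above, combined with the elementary estimates $\mathcal S_\D(u)\le\mathcal S_\D(\varphi)+C\|u-\varphi\|_{V^0}$ (with $C$ independent of $\D$, via Proposition \ref{proppoincarecont} and the continuous trace inequality) and $\mathcal W_\D(\q)\le\mathcal W_\D(\psi)+C(1+\mathcal C_\D)\|\q-\psi\|_W$, reduces the claims to the smooth case; in the second estimate the uniform bound on $\mathcal C_{\D^l}$ supplied by coercivity is what makes the passage legitimate.

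Coercivity is the main obstacle, since it demands a discrete Poincar\'e inequality that anchors the fracture pressure even for fully immersed fractures, which carry no Dirichlet unknown in $\dofDirf$. I would argue in three steps. First, as $v_{\D_m}$ vanishes on $\faces_{ext}$, the matrix HFV discrete Poincar\'e inequality of \cite{EGH09} yields $\|\Pi_{\D_m}v_{\D_m}\|_{L^2(\O)}\le C\|\grad_{\D_m}v_{\D_m}\|_{L^2(\O)^d}$. Second, I would invoke a discrete trace inequality on each fixed Lipschitz subdomain $\omega_\alpha$, uniform in $h_\T$, which controls the one--sided face unknowns entering $\Pi^\alpha_{\D_m}$ by the matrix data, $\|\Pi^\alpha_{\D_m}v_{\D_m}\|_{L^2(\G_\alpha)}\le C(\|\Pi_{\D_m}v_{\D_m}\|_{L^2(\O)}+\|\grad_{\D_m}v_{\D_m}\|_{L^2(\O)^d})$; this is legitimate because $\Pi^\alpha_{\D_m}v_{\D_m}$ is exactly the HFV boundary trace of the matrix field on $\G_\alpha\subset\partial\omega_\alpha$. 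Third, adding the interface term $\|\Pi^\alpha_{\D_m}v_{\D_m}-\wt\Pi_{\D_f}v_{\D_f}\|_{L^2(\G_\alpha)}$ then controls $\wt\Pi_{\D_f}v_{\D_f}$ on each $\G_\alpha$; since $\wt\Pi_{\D_f}$ keeps the face unknowns $u_\sigma$ and $\|\Pi_{\D_f}v_{\D_f}-\wt\Pi_{\D_f}v_{\D_f}\|_{L^2(\G)}\le C\,h_\Delta\|\grad_{\D_f}v_{\D_f}\|_{L^2(\G)^{d-1}}$ by the local estimates of \cite{BM2013}, one arrives at $\|\Pi_{\D_f}v_{\D_f}\|_{L^2(\G)}\le C\|(v_{\D_m},v_{\D_f})\|_\D$, hence a uniform bound on $\mathcal C_{\D^l}$. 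The delicate point is precisely that, for an immersed fracture, this chain substitutes the matrix coupling for the absent boundary control, mirroring the role of the term $\sum_{\alpha}\int_{\G_\alpha}(\gamma_\alpha u_m-u_f)^2\,\d\tau(\x)$ in the continuous Proposition \ref{proppoincarecont}.

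For consistency I would introduce the HFV interpolator $P_\D\colon C^\infty_\Omega\times C^\infty_\G\to X_\D^0$ assigning the point values of the smooth data at the cell centers $\x_K$, at the matrix and fracture face centers $\x_\sigma$ (taking the appropriate one--sided trace for each $\ov K_\sigma$ on a fracture face), and at the edge centers, and then estimate $s(P_\D u,u)$ term by term: the cellwise and facewise gradient and $P_0$ consistency estimates of \cite{EGH09}, together with Taylor expansion of the smooth data for the trace terms $\Pi^\alpha_{\D_m}$ and $\wt\Pi_{\D_f}$, give $\mathcal S_\D(u)\le C(u,\theta_\T)\,h_\T$ for smooth $u$, and density concludes. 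For limit conformity I would fix smooth $\q=(\q_m,\q_f)$ and rewrite $w(v_\D,\q)$ cell by cell and face by face through the exact integration--by--parts identities satisfied by the HFV gradient; the mismatch between the reconstructed gradient fluxes and the mean normal fluxes of $\q$ is $\mathcal O(h_\T)\|v_\D\|_\D$ in the cells and fracture faces, while the interface contribution built on $\gamma_{\n,\alpha}\q_m$ is treated as in the VAG-CV corollary, by replacing $\gamma_{\n,\alpha}\q_m$ with its face averages and bounding $\Pi^\alpha_{\D_m}v_{\D_m}-\wt\Pi_{\D_f}v_{\D_f}$ by $\|v_\D\|_\D$ via an estimate analogous to \eqref{proofcclvagcv_0}. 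This yields $\mathcal W_\D(\q)\le C(\q,\theta_\T)\,h_\T$ on smooth data, and density in $W$ together with the uniform coercivity bound completes the proof that $(\D^l)_{l\in\N}$ is coercive, consistent and limit conforming.
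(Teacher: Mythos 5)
Your consistency and limit-conformity arguments are sound in outline and essentially retrace the paper's proof: an interpolator taking point values at cell centers, face centers and edge points, $\mathcal O(h_\T)$ estimates against smooth data (with the interface term handled by replacing $\gamma_{\n,\alpha}\q_m$ by its face averages, exactly as the paper does when adapting (29)--(31) of \cite{GSDFN}), then density, with the uniform bound on $\mathcal C_{\D^l}$ used to pass from $C^\infty_{W_m}\times C^\infty_{W_f}$ to all of $W$. The genuine gap is in the coercivity argument, which is the actual crux of the proposition.

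Step 1 of your chain, namely $\|\Pi_{\D_m}v_{\D_m}\|_{L^2(\O)}\le C\|\grad_{\D_m}v_{\D_m}\|_{L^2(\O)^d}$ ``by the matrix HFV discrete Poincar\'e inequality of \cite{EGH09}'', is not available here, and in the paper's generality it is false. The matrix space \eqref{XHFV} carries two independent one-sided unknowns $u_{\ov K_\sigma}$ per fracture face, so discrete matrix functions may jump across $\G$ and the discrete gradient does not penalize this jump; the inequality of \cite{EGH09} concerns a single family of face unknowns with Dirichlet values on all of $\del\O$, not the cracked domain $\O\setminus\ov\G$ with Dirichlet data only on $\faces_{ext}$. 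Concretely, the assumptions on $(\G_\alpha)_{\alpha\in\chi}$ allow $\G$ to enclose a polyhedral subregion $\omega\ssubset\O$ (take for the inner side $\G_\alpha=\del\omega_\alpha\cap\G=\del\omega_\alpha$). Choose $v_{\D_m}$ equal to $1$ on every cell inside $\omega$ and on all faces of those cells, including the inner one-sided fracture unknowns, equal to $0$ on all remaining degrees of freedom, and $v_{\D_f}=0$. This $v_\D$ lies in $X^0_\D$ (it vanishes on $\faces_{ext}$), and $\grad_{\D_m}v_{\D_m}=0$ since the HFV gradient and its stabilization vanish on locally constant data, while $\|\Pi_{\D_m}v_{\D_m}\|_{L^2(\O)}=|\omega|^{1/2}>0$. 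Such a function is controlled only through the coupling terms $\sum_{\alpha\in\chi}\|\Pi^\alpha_{\D_m}v_{\D_m}-\wt\Pi_{\D_f}v_{\D_f}\|_{L^2(\G_\alpha)}$, so no bound on the matrix $L^2$ norm can be obtained before those terms enter: your chain (matrix first, then trace, then fracture) is ordered in a way that cannot work. Even when every connected component of $\O\setminus\ov\G$ touches $\del\O$, what is needed is a partial-boundary discrete Poincar\'e inequality on each component, which again is not the statement you cite. This is precisely why the paper, mirroring the continuous Proposition \ref{proppoincarecont}, proves coercivity by compactness and contradiction rather than by a functional-inequality chain: it first establishes limit conformity against the regular test space $C^\infty_{W_m}\times C^\infty_{W_f}$ (which requires no coercivity), and then, for a normalized sequence violating coercivity, combines the compactness result of \cite{koala} with Proposition \ref{theoremlimitregularity} and Corollary \ref{cortheoremlimitregularity} to produce a limit $(u_m,u_f)\in V^0$ with $\|(u_m,u_f)\|_{V^0}=0$ but unit $L^2$ norm, a contradiction; only afterwards are coercivity and density used to extend limit conformity to all of $W$. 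A direct repair of your argument would require a discrete Poincar\'e inequality for the cracked domain with the jump/coupling terms on the right-hand side from the outset, which essentially amounts to redoing that compactness argument; as written, your coercivity step fails.
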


\begin{proof} Let us denote in the following by $\Pi_\cells$ and $\Pi_{\faces} = \wt\Pi_{\faces}$ the HFV matrix and fracture reconstruction operators for the special case that $\omega_{\ov K_\sigma} = \emptyset = \omega_e$ for all $\ov K_\sigma\in\bigcup_{\sigma\in\faces}\ov\cells_\sigma$ and $e\in\edges_\G$. 
We start our numerical analysis for HFV by proving the proposition for these special choices and then use Lemma \ref{lemmakonstantin} for generalizing the results.\\

\noindent\textit{Coercivity: }
We first prove that limit conformity against regular test functions, as proved below, implies coercivity.
\par Assume that the sequence of discretizations $(\D^l)_{l\in\N}$ is not coercive. Then we can find a sequence $((u_{\D_m^l},u_{\D_f^l}))_{l\in\N}$ with $(u_{\D_m^l},u_{\D_f^l})\in X_{\D^l}^0$, such that
\begin{equation}
\label{prooflemmacclhfv0}
\|\Pi_{\D_m^l} u_{\D_m^l}\|_{L^2(\O)} + \|\Pi_{\D_f^l} u_{\D_f^l}\|_{L^2(\G)} = 1 \qquad\text{and}\qquad \|(u_{\D_m^l},u_{\D_f^l})\|_{\D^l} < \frac{1}{l}.
\end{equation}
Then follows from a compactness result of \cite{koala} that there exists a $u = (u_m,u_f)\in L^2(\O)\times L^2(\G)$, 
s.t. up to a subsequence
$$
(\Pi_{\D_m^l} u_{\D_m^l},\Pi_{\D_f^l} u_{\D_f^l}) \longrightarrow (u_m,u_f) \qquad\text{ in }L^2(\O)\times L^2(\G)\qquad (\text{ for }l\rightarrow\infty)
$$
and therefore $\|u_m\|_{L^2(\O)} + \|u_f\|_{L^2(\G)} = 1$. On the other hand follows from the discretizations' limit conformity against regular test functions (see below) by Proposition \ref{theoremlimitregularity} and Corollary \ref{cortheoremlimitregularity} that $(u_m,u_f)\in V^0$ and that up to a subsequence
$$
\left\{\begin{array}{r@{\,\,}c@{\,\,}l}
&&\nabla_{\D_m} v_{\D_m^l} \rightharpoonup \nabla v_m \quad\mbox{ in } L^2(\O)^d,\\
&&\nabla_{\D_f} v_{\D_f^l} \rightharpoonup \nabla_\tau v_f \quad\mbox{ in } L^2(\G)^{d-1},\\
&&\wt \Pi_{\D_f} v_{\D_f^l} -\Pi^\alpha_{\D_m} v_{\D_m^l}  \rightharpoonup v_f - \gamma_\alpha v_m\quad\mbox{ in } L^2(\G_\alpha),\text{ for }\alpha\in\chi.\\
\end{array}\right.
$$
Since by construction holds $\|(u_{\D_m^l},u_{\D_f^l})\|_{\D^l}\rightarrow 0$, we obtain $\|(u_m,u_f)\|_{V^0} = 0$. But $\|\cdot\|_{V^0}$ is a norm on $V^0$, which contradicts the fact that $\|u_m\|_{L^2(\O)} + \|u_f\|_{L^2(\G)} = 1$.
\\

\noindent\textit{Consistency: }For $(\varphi_m,\varphi_f)\in C_\O^\infty\times C_\G^\infty$ let us define the projection 
 $P_{\D_m}\varphi_m \in X_{\D_m}^0$ such that for all cell $K\in \cells$ one has  
\begin{align*}
\begin{array}{l@{\,\,}l@{\,\,}rl}
&(P_{\D_m}\varphi_m)_K = \varphi_m(\x_K), & \\
& (P_{\D_m}\varphi_m)_{\ov K_\sigma} = \varphi_m(\x_\sigma) \qquad &\forall \sigma\in\faces_K, 
\end{array}
\end{align*} 
and the projection $P_{\D_f}\varphi_f \in X_{\D_f}^0$ such that 
$(P_{\D_f}\varphi_f)_\nu = \varphi_f(\x_\nu)$ for all $\nu\in dof_{\D_f}$. 
Let us set $v_\D = (P_{\D_m}\varphi_m,P_{\D_f}\varphi_f)$. 
Then holds
$$
\|v_K - \varphi_m\|_{L^2(K)}\leq C_{\varphi_m}\cdot h_\T\cdot |K|^\frac{1}{2}\qquad\text{for }K\in\cells,
$$
where $C_{\varphi_m} := \max_\O \|\grad\varphi_m\|$. Summing over $K\in\cells$ yields
$$
\|\Pi_{\cells}v_{\D_m} - \varphi_m\|_{L^2(\O)}\leq C_{\varphi_m}\cdot h_\T\cdot |\O|^\frac{1}{2} .
$$
We also have
$$
\|v_{\ov K_\sigma} - \trace_\alpha\varphi_m\|_{L^2(\G_\alpha)}\leq c_{\varphi_m}^\alpha\cdot h_\T\cdot |\sigma|^\frac{1}{2}\qquad\text{for }\sigma\in\faces^\alpha,\ \ov K_\sigma\in\ov\cells_\sigma^\alpha
$$
where $c_{\varphi_m}^\alpha := \max_{\G_\alpha} \|\grad_\tau\trace_\alpha\varphi_m\|$, from which we obtain
$$
\|\Pi_{\D_m}^\alpha v_{\D_m} - \trace_\alpha\varphi_m\|_{L^2(\G_\alpha)}\leq c_{\varphi_m}^\alpha\cdot h_\T\cdot |\G_\alpha|^\frac{1}{2}.
$$
Analogously we can derive
$$
\|\Pi_\faces v_{\D_f} - \varphi_f\|_{L^2(\G)}\leq c_{\varphi_f}\cdot h_\T\cdot|\G|^\frac{1}{2},
$$
where $c_{\varphi_f} := \max_\G \|\grad_\tau\varphi_f\|$.
Furthermore, it follows from Lemma 4.3 of \cite{EGH09} that there exists $C>0$ depending only on $\theta_\T$ and $\varphi$ such that
\begin{equation*}
\left.\begin{array}{r@{\,\,}c@{\,\,}ll}
\|\nabla_{\D_m}v_{\D_m} - \nabla\varphi\|_{L^2(\O)}+\|\nabla_{\D_f}v_{\D_f} - \nabla\varphi\|_{L^2(\Gamma)} 
\leq C h_{\T}
\end{array}\right.
\end{equation*}
Taking into account that $C^\infty_{\O}\times C^\infty_{\G}$ is dense in $V^0$, we see that the treated discretisation is consistent.\\

\noindent\textit{Limit Conformity: }Let $\bm{\varphi}_m\in C^\infty_{W_m}$ and for all $K \in \cells,\ \sigma\in \faces_K$ let $\bm{\varphi}_K := \frac{1}{|K|}\int_{K}\bm{\varphi}_m \d\x$ and $\bm{\varphi}_{K,\sigma} := \frac{1}{|\sigma|}\int_{\sigma}\trace_{\n_{K,\sigma} }\bm{\varphi}_m \d\tau(\x)$. In exactly the same manner as \cite{GSDFN}, (29)-(31) are proved, we can show that holds
\begin{align}
A^{12}_{\D_m}u_{\D_m} &\leq C h_\T \|\nabla_{\D_m} u_{\D_m}\|_{{L^2(\Omega)}^d}\qquad\text{and}\label{prooflemmacclhfvcells_1}\\
A^{11}_{\D_m}u_{\D_m} + A^2_{\D_m}u_{\D_m} &- \sum_{\alpha\in\chi}\int_{\Gamma_\alpha}\trace_{\n,\alpha}\bm{\varphi}_{m}(\Pi_{\D_m}^\alpha u_{\D_m}) \d\tau(\x) \notag \\
&=\sum_{K\in \cells}\sum_{\sigma\in \faces_K}|\sigma|(u_K - u_{\ov K_\sigma})(\bm{\varphi}_{K,\sigma} - \bm{\varphi}_{K})\cdot\n_{K,\sigma},\label{prooflemmacclhfvcells_2}
\end{align}
where 
\begin{align*}
A^{11}_{\D_m}u_{\D_m} &:= \sum_{K\in \cells}\sum_{\sigma\in \faces_K}|\sigma|(u_{\ov K_\sigma} - u_K)\boldsymbol{\varphi}_{K}\cdot\n_{K,\sigma},\\
A^{12}_{\D_m}u_{\D_m} &:= \sum_{K\in \cells}\sum_{\sigma\in\faces_K} R_{K,\sigma}(u_{\D_m})\n_{K,\sigma}\cdot \int_{D_{K,\sigma}}\bm{\varphi}_{m} \d\x,\\
A^2_{\D_m} u_{\D_m} &:= \sum_{K\in \cells}\sum_{\sigma\in \faces_K}|\sigma|u_K\bm{\varphi}_{K,\sigma}\cdot\n_{K,\sigma}\qquad\text{and}\\
A^{11}_{\D_m}u_{\D_m} + A^{12}_{\D_m}u_{\D_m} + A^{2}_{\D_m}u_{\D_m} &= \int_{\Omega}  \(\nabla_{\D_m} u_{\D_m} \cdot\bm{\varphi}_m + (\Pi_{\cells} u_{\D_m})\div(\bm{\varphi}_m)\) \d\x,
\end{align*}
with the definition of the gradient stabilization term $R_{K,\sigma}(u_{\D_m})$ as in \cite{EGH09}, pp. 8-9. 
Therefore, applying Cauchy-Schwarz inequality to \eqref{prooflemmacclhfvcells_2}, using the regularity of $\bm{\varphi}_m$, and the estimate \eqref{prooflemmacclhfvcells_1}, we deduce that there exists $C$ depending only on $\bm{\varphi}_m$, $\theta_\T$, such that 
\begin{align*}
\int_{\Omega}  \(\nabla_{\D_m} u_{\D_m} \cdot\bm{\varphi}_m + (\Pi_{\cells} u_{\D_m})\div(\bm{\varphi}_m)\) \d\x
- \sum_{\alpha\in\chi}\int_{\Gamma_\alpha}\trace_{\n,\alpha}\bm{\varphi}_{m}(\Pi_{\D_m}^\alpha u_{\D_m}) \d\tau(\x)
\leq C h_\T \|\nabla_{\D_m} u_{\D_m}\|_{{L^2(\Omega)}^d}.
\end{align*}
Taking into account the result \cite{GSDFN} (33), i.e. for all $\bm\varphi\in C_{W_f}^\infty$ exists a constant $C > 0$ depending only on $\theta_\T$, such that
\begin{align*}
\Big| \int_\G\(\grad_{\D_f}u_{\D_f}\cdot\bm\varphi_f &+ (\Pi_\faces u_{\D_f})\div(\bm\varphi_f)\) \d\tau(\x) \Big| \\
&\leq C h_\Delta\|\grad_{\D_f}u_{\D_f}\|_{{L^2(\G)}^{d-1}},
\end{align*}
we obtain all together
\begin{align*}
w_\D(u_\D,\q) \leq C\cdot h_\T\cdot\|u_\D\|_\D\qquad\text{ for all }\q\in C^\infty_{W_m}\times C^\infty_{W_f}.
\end{align*}
This result is shown above to imply coercivity, which is needed to conclude now.
\par Finally, using that $C^\infty_{W_m}\times C^\infty_{W_f}$ is dense in $W$ and the coercivity of the scheme, 
we derive limit conformity on the whole space of test functions.\\

\noindent\textit{Generalization to arbitrary HFV discretizations: }
We want to apply Lemma \ref{lemmakonstantin}. From \cite{EGH09} Lemma 4.1 and \cite{koala}, it follows that there are positive constants $C_m$ and $C_f$ only depending on $\theta_\T$ and $d$, such that for all $u_\D\in X_\D$ holds
\begin{align*}
\|\Pi_\cells u_{\D_m} - \Pi_{\D_m}u_{\D_m}\|_{L^2(\O)}^2 = \sum_{K\in\cells}\sum_{\sigma\in\faces_K}|\omega_{K,\ov K_\sigma}|(u_K - u_{\ov K_\sigma})^2&\leq C_m\cdot h_\T^2\cdot\|\grad_{\D_m}u_{\D_m}\|_{{L^2(\O)}^d}^2
\\
\|\Pi_\faces u_{\D_f} - \Pi_{\D_f}u_{\D_f}\|_{L^2(\G)}^2 = \sum_{\sigma\in\faces_\G}\sum_{e\in\edges_\sigma}|\omega_{\sigma,e}|(u_\sigma - u_e)^2&\leq C_f\cdot h_\Delta^2\cdot\|\grad_{\D_f}u_{\D_f}\|_{{L^2(\G)}^{d-1}}^2.
\end{align*}
The remaining conditions of Lemma \ref{lemmakonstantin} are trivially satisfied, from what follows the statement of the proposition.
\hfill\qed
\end{proof}

\begin{remark}
The precedent proof shows that for solutions $(u_m,u_f)\in V^0$ and $(\q_m,\q_f)\in W$ of \eqref{modeleCont} 
such that $u_m\in C^2(\ov K)$, $u_f\in C^2(\ov \sigma)$, $\q_m \in (C^1(\ov K))^d$, $\q_f \in (C^1(\ov \sigma))^{d-1}$ for all 
$K\in \cells$ and all $\sigma\in \Gamma_f$, 
the HFV schemes are consistent and limit conforming of order 1, 
and therefore convergent of order 1.
\end{remark}

\subsection{Finite Volume Formulation for \emph{VAG} and \emph{HFV} Schemes}
\label{sec_FV}

For $K\in\cells$ let 
$$
dof_K = \left\{
\begin{array}{l l}
\{\overline K_s, s\in \nodes_K\}\cup \{K_\sigma, \sigma\in \faces_K \cap\faces_\Gamma\}\mbox{ for }  VAG,\\
\{\overline K_\sigma, \sigma\in \faces_K \} \mbox{ for } HFV. 
\end{array}\right.
$$
Analogously, in the fracture domain, for $\sigma\in\faces_\G$ let 
$$
dof_\sigma = \left\{
\begin{array}{l l}
\nodes_\sigma \mbox{ for }  \emph{VAG},\\
\edges_\sigma \mbox{ for } \emph{HFV}. 
\end{array}\right.
$$
Then, for any $\nu\in dof_K$ the discrete \emph{matrix-matrix}-fluxes are defined as 
$$
F_{K\nu}(u_{\D_m})
=  \sum_{\nu'\in dof_K} 
\(\int_K \Lambda_m\grad_{\D_m}\mathfrak{e}_\nu \grad_{\D_m}\mathfrak{e}_{\nu'} \d\x\)  (u_K - u_{\nu'}). 
$$
such that $\int_\Omega \Lambda_m\grad_{\D_m}u_{\D_m} \grad_{\D_m}v_{\D_m} \d\x 
= \sum_{K\in \cells}\sum_{\nu\in dof_K} F_{K\nu}(u_{\D_m})(v_K-v_\nu)$. 
For all $\nu\in dof_\sigma$ the discrete \emph{fracture-fracture}-fluxes are defined as 
$$
F_{\sigma\nu}(u_{\D_f})
= \sum_{\nu'\in dof_\sigma} 
\(\int_\sigma \Lambda_f\grad_{\D_f}\mathfrak{e}_\nu \grad_{\D_f}\mathfrak{e}_{\nu'} \d\tau_f(\x)\) (u_\sigma - u_{\nu'}), 
$$
such that $\int_\Gamma \Lambda_f\grad_{\D_f}u_{\D_f} \grad_{\D_f}v_{\D_f} \d\tau_f(\x) 
= \sum_{\sigma\in \faces_\Gamma}\sum_{\nu\in dof_\sigma} F_{\sigma\nu}(u_{\D_f})(v_\sigma-v_\nu)$. 
To take interactions of the matrix and the fracture domain into account we introduce the set of \emph{matrix-fracture} (\emph{mf}) connectivities 
$$
\C = \{(\nu_m,\nu_f) \,|\, \nu_m\in \dofm^\Gamma, \nu_f\in\doff \mbox{ s.t. } \x_{\nu_m} = \x_{\nu_f}\}
$$ 
with $\dofm^\Gamma = \{\nu\in \dofm \,|\, \x_\nu\in\ov \Gamma\}$. 
The \emph{mf}-fluxes are built such that 
\begin{align*}
& \dsp a_{\D_{mf}}\( (u_{\D_m},u_{\D_f}),(v_{\D_m},v_{\D_f})\) 
= \sum_{(\nu_m,\nu_f)\in \C} F_{\nu_m\nu_f}(u_{\D_m},u_{\D_f})(v_{\nu_m}-v_{\nu_f}) \\
& = \dsp \sum_{i\in I}\int_{\G_i} \frac{T_f}{2\xi-1} 
\!\!\!\!\!\!\!\!
\sum_{\substack{(\alpha,\beta)\in\\ \{(\alpha^\pm(i),\alpha^\mp(i))\}}}
\!\!\!\!\!\!\!\!
\( \xi \Pi_{\D_m}^{\alpha} u_{\D_m} + (1-\xi) \Pi_{\D_m}^{\beta} u_{\D_m} - \wt \Pi_{\D_f} u_{\D_f}\)
\(\Pi_{\D_m}^{\alpha} v_{\D_m} - \wt \Pi_{\D_f} v_{\D_f}\) \d\tau(\x),  
\end{align*}
for all $(v_{\D_m},v_{\D_f}) \in X_\D$.  
For all $\sigma\in \faces_\Gamma$ and $K\in \cells_\sigma$, 
let us denote by $\alpha(K,\sigma)$ the unique $\alpha\in \chi$ such that $\sigma \in \faces_\alpha$ and 
$\n_\alpha = \n_{K,\sigma}$. Let us also set for all $\sigma\in \faces_\Gamma$, 
$(\chi\times \chi)_\sigma = \{(\alpha(K,\sigma),\alpha(L,\sigma)),$ $(\alpha(L,\sigma),\alpha(K,\sigma))\}$ with 
$\cells_\sigma=\{K,L\}$. Then, holds 
\begin{align*}
& a_{\D_{mf}}\( (u_{\D_m},u_{\D_f}),(v_{\D_m},v_{\D_f})\) =  \\
& \sum_{\sigma\in\faces_\Gamma} \sum_{(\alpha,\beta)\in (\chi\times\chi)_\sigma} \int_\sigma {T_f \over 2\xi-1} \(\xi \Pi^\alpha_{\D_m}u_{\D_m}+ (1-\xi)\Pi^\beta_{\D_m}u_{\D_m} - \wt \Pi_{\D_f}u_{\D_f}\)
\(\Pi^\alpha_{\D_m}v_{\D_m} - \wt \Pi_{\D_f}v_{\D_f}\) \d\tau(\x). 
\end{align*}
For all $\sigma\in \faces_\Gamma$, $K\in \cells_\sigma$ and $\x\in \sigma$,  
let us notice that, for the VAG scheme, one has 
$\Pi^{\alpha(K,\sigma)}_{\D_m} \mathfrak{e}_{K_\sigma}(\x) = \wt \Pi_{\D_f}\mathfrak{e}_{\sigma}(\x)$, and 
$\Pi^{\alpha(K,\sigma)}_{\D_m} \mathfrak{e}_{\ov K_s}(\x) = \wt \Pi_{\D_f}\mathfrak{e}_{s}(\x)$
for all $s\in \nodes_\sigma$, and for the HFV scheme, one has 
$\Pi^{\alpha(K,\sigma)}_{\D_m} \mathfrak{e}_{\ov K_\sigma}(\x) = \wt \Pi_{\D_f}\mathfrak{e}_{\sigma}(\x)=1\restriction_\sigma$. 
It result after some computations that the VAG matrix fracture fluxes are defined by 
\begin{align*}
F_{K_\sigma \sigma}(u_{\D_m},u_{\D_f}) = &
\sum_{s\in \nodes_\sigma}
\(\int_\sigma {T_f \over 2\xi-1} (\wt \Pi_{\D_f}\mathfrak{e}_{\sigma})(\wt \Pi_{\D_f}\mathfrak{e}_{s}) \d\tau(\x)\)
\(\xi u_{\ov K_s} + (1-\xi)u_{\ov L_s}- u_s \) \\
&+ \(\int_\sigma {T_f \over 2\xi-1} (\wt \Pi_{\D_f}\mathfrak{e}_{\sigma})^2 \d\tau(\x)\)
\(\xi u_{K_\sigma} + (1-\xi)u_{L_\sigma}- u_\sigma \), 
\end{align*}
for all $\sigma\in \faces_\Gamma$, $\cells_\sigma = \{K,L\}$ , and by 
\begin{align*}
F_{\ov Q_s s}(u_{\D_m},u_{\D_f}) = &\sum_{\sigma\in (\bigcup_{Q\in \ov Q_s} \faces_Q)\cap \faces_s\cap\faces_\Gamma}  
\quad\quad\sum_{K\in \cells_\sigma \cap \ov Q_s,\, L\in\cells_\sigma\setminus\{K\}} \Bigl\{ \\
& 
\(\int_\sigma {T_f \over 2\xi-1} (\wt \Pi_{\D_f}\mathfrak{e}_{s})^2 \d\tau(\x)\)
\(\xi u_{\ov K_{s}} + (1-\xi)u_{\ov L_{s}}- u_{s}\) \\
& + \sum_{s'\in \nodes_\sigma\,|\, ss'\in \edges_\sigma} 
\(\int_\sigma {T_f \over 2\xi-1} (\wt \Pi_{\D_f}\mathfrak{e}_{s'})(\wt \Pi_{\D_f}\mathfrak{e}_{s}) \d\tau(\x)\)
\(\xi u_{\ov K_{s'}} + (1-\xi)u_{\ov L_{s'}}- u_{s'}\) \\
& + 
\(\int_\sigma {T_f \over 2\xi-1} (\wt \Pi_{\D_f}\mathfrak{e}_{\sigma})(\wt \Pi_{\D_f}\mathfrak{e}_{s}) \d\tau(\x)\)
\(\xi u_{K_{\sigma}} + (1-\xi)u_{L_{\sigma}}- u_{\sigma}\) \quad\quad \Bigr\},
\end{align*}
for all $s\in \nodes_\Gamma$, $\ov Q_s\in \ov \cells_s$.  
Similarly the HFV matrix fracture fluxes are defined by 
\begin{align*}
F_{\ov K_\sigma \sigma}(u_{\D_m},u_{\D_f}) = 
 {1 \over 2\xi-1}\(\int_\sigma T_f(\x) \d\tau(\x)\)  \(\xi u_{K_\sigma} + (1-\xi) u_{L_\sigma}- u_\sigma \), 
\end{align*}
for all $\sigma\in \faces_\Gamma$, $\cells_\sigma = \{K,L\}$. 

We observe that for the VAG-CV scheme 
(since 
$\int_\sigma T_f (\wt \Pi_{\D_f}\mathfrak{e}_{s'})(\wt \Pi_{\D_f}\mathfrak{e}_{s}) \d\tau(\x) = 0$ 
for $s\neq s'$ and  $\int_\sigma T_f (\wt \Pi_{\D_f}\mathfrak{e}_{\sigma})(\wt \Pi_{\D_f}\mathfrak{e}_{s}) \d\tau(\x) = 0$)
as well as  for the HFV scheme, the fluxes $F_{\nu_m\nu_f}$ only involves 
the d.o.f. located at the point $\x_{\nu_m}=\x_{\nu_f}$. 

The discrete source terms are defined by
$$
H_\nu = \left\{
\begin{array}{l l}
\displaystyle\int_\Omega h_m\Pi_{\D_m}\mathfrak{e}_\nu \d\x&\quad\text{for }\nu\in\dofm,\\
\displaystyle\int_\G h_f\Pi_{\D_f}\mathfrak{e}_\nu \d\tau_f(\x)&\quad\text{for }\nu\in\doff.
\end{array}\right.
$$

\begin{figure}[h!]
\begin{center}
\includegraphics[height=0.4\textwidth]{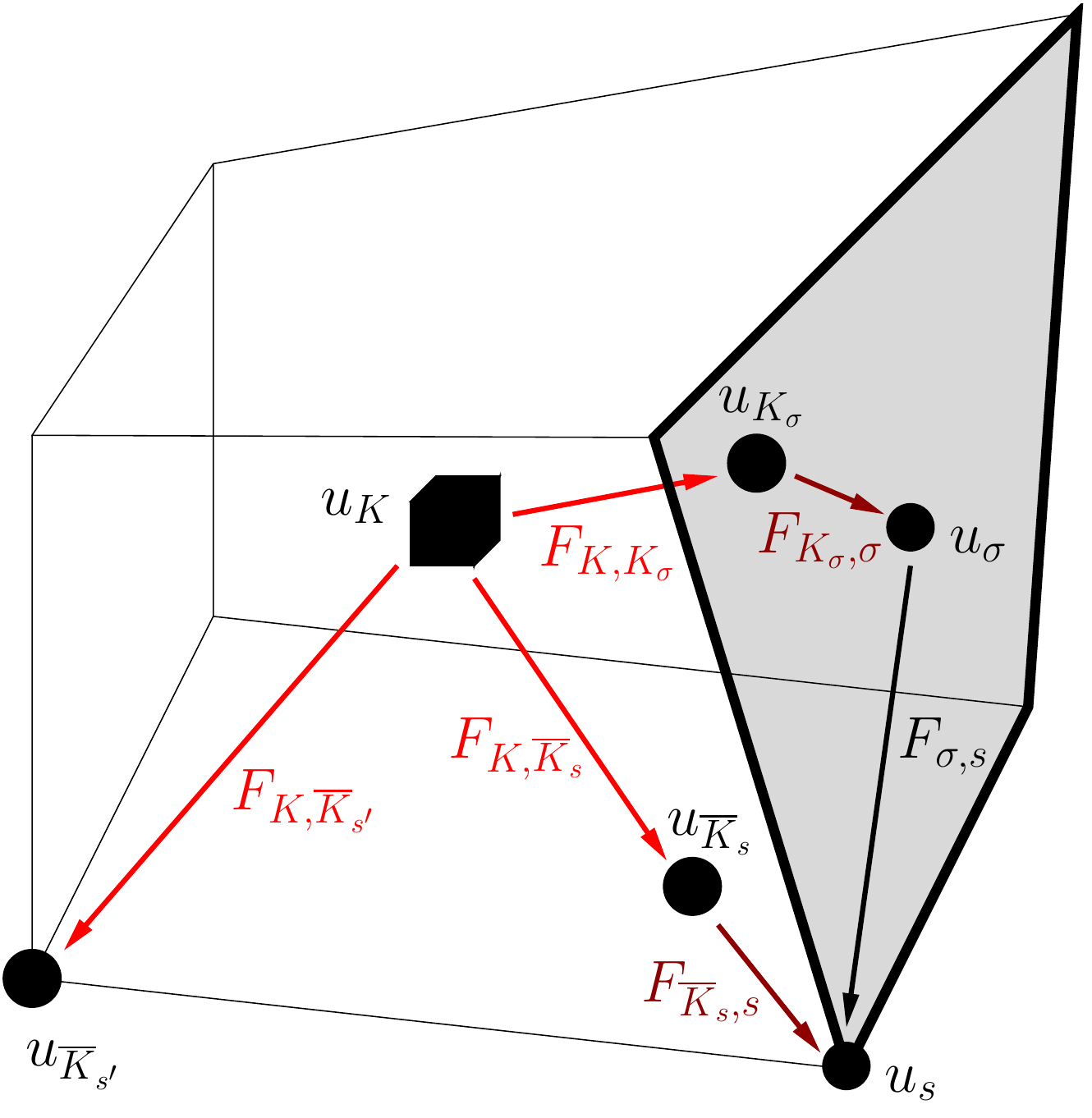}\hspace*{0.1\textwidth}
\includegraphics[height=0.4\textwidth]{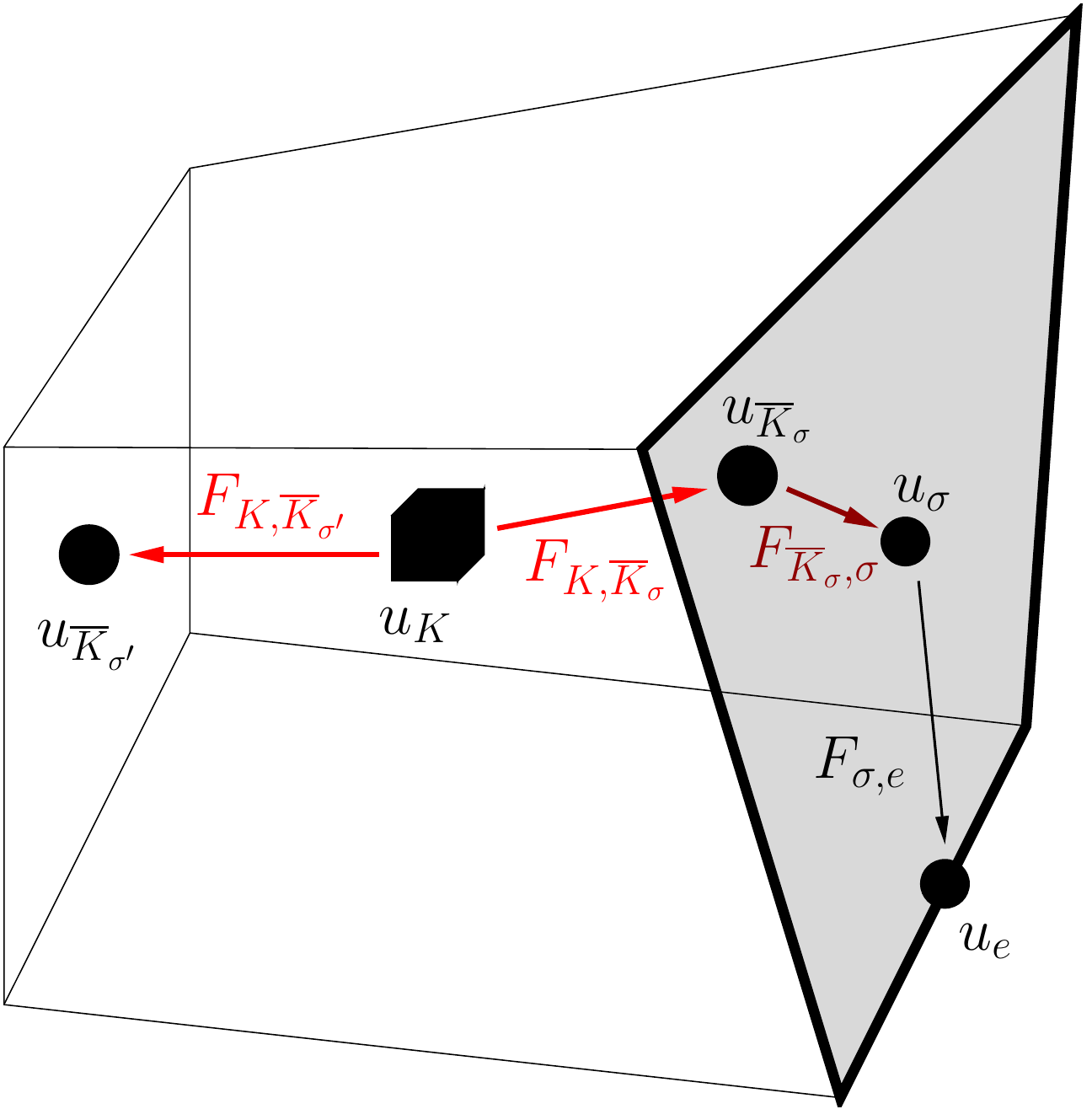}
\caption{\ \emph{mm}-fluxes (red), \emph{mf}-fluxes (dark red) and \emph{ff}-fluxes (black) for VAG (left) and HFV (right) on a 3D cell touching a fracture}
\end{center}
\end{figure}

The following Finite Volume formulation of \eqref{formVar} is equivalent to the discrete variational formulation \eqref{GradientScheme}: 
find $(u_{\D_m},u_{\D_f})\in X_{\D}^0$ such that 
\begin{eqnarray*}
\left\{\begin{array}{l}
\begin{array}{ll}
\text{for all }K\in\cells: & \sum\limits_{\nu\in dof_K}F_{K\nu}(u_{\D_m}) = H_K\\\\
\text{for all }\sigma\in\faces_\G: & \sum\limits_{\nu\in dof_\sigma}F_{\sigma\nu}(u_{\D_f}) 
- \sum\limits_{\substack{\nu_m\in dof_{\D_m}\\\text{s.t. }(\nu_m,\sigma)\in\C}} F_{\nu_m\sigma}(u_{\D_m},u_{\D_f}) = H_\sigma
\end{array}
\\\\
\text{for all }\nu_m\in dof_{\D_m} \sm(\cells\cup \dofDirm):\\\\
\qquad\qquad\quad - \sum\limits_{K\in\cells_{\nu_m}}F_{K\nu_m}(u_{\D_m}) 
+ \sum\limits_{\substack{\nu_f\in dof_{\D_f}\\\text{s.t. }(\nu_m,\nu_f)\in\C}}F_{\nu_m\nu_f}(u_{\D_m},u_{\D_f}) 
= H_{\nu_m}\\\\
\text{for all }\nu_f\in dof_{\D_f} \sm(\faces_\G\cup \dofDirf):\\\\
\qquad\qquad\quad - \sum\limits_{\sigma\in\faces_{\G,\nu_f}}F_{\sigma\nu_f}(u_{\D_f}) 
- \sum\limits_{\substack{\nu_m\in dof_{\D_m}\\\text{s.t. }(\nu_m,\nu_f)\in\C}}F_{\nu_m\nu_f}(u_{\D_m},u_{\D_f}) 
= H_{\nu_f}.
\end{array}\right.
\end{eqnarray*}

Here, $\cells_{\nu_m}$ stands for the set of indices $\{K\in\cells\mid\nu_m\in dof_K\}$ and $\faces_{\G,\nu_f}$ stands for the set  $\{\sigma\in\faces_\G\mid\nu_f\in dof_\sigma\}$.

It is important to note that, using the equation in each cell, the cell unknowns $u_K$, $K\in \cells$, 
can be eliminated without fill-in. 

\section{Numerical Results}
\label{sec_num}
The objective of this numerical section is to compare the VAG-FE, VAG-CV, and the HFV schemes 
in terms of accuracy and CPU efficiency for both Cartesian and tetrahedral meshes on heterogeneous 
isotropic and anisotropic media. For that purpose a family of analytical solutions is built for the fixed 
value of the parameter $\xi=1$. 
We refer to \cite{MJE05}, \cite{ABH09}, \cite{AELH14} 
for a comparison of the solutions 
obtained with different values of the parameter $\xi\in [{1\over 2},1]$ 
with the solution obtained with a 3D representation of the fractures. 

Table \ref{table1} exhibits for the Cartesian  
and tetrahedral meshes, as well as for both the VAG and HFV schemes, 
the number of degrees of freedom (Nb dof), the number of d.o.f. after elimination 
of the cell and Dirichlet unknowns (nb dof el.), and the number of nonzero element 
in the linear system  after elimination without any fill-in of the cell unknowns (Nb Jac). 

In all test cases, the linear system obtained after elimination of the cell unknowns is 
solved using the GMRes iterative solver with the stopping criteria $10^{-10}$. 
The GMRes solver is preconditioned by ILUT \cite{ILUT1}, \cite{ILUT2} 
using the thresholding parameter $10^{-4}$ chosen 
small enough in such a way that all the linear systems can be solved for both schemes and for all meshes. 
In tables \ref{table2} and \ref{table3}, we report the number 
of GMRes iterations $Iter$ and the CPU time taking into account the elimination of the cell unknowns, 
the ILUT factorization, the GMRes iterations, and the computation of the cell values. 

We ran the program on a 2,6 GHz Intel Core i5 processor with 8 GB 1600 MHz DDR3 memory.

\subsection{A class of analytical solutions}

We consider a 3-dimensional open, bounded, simply connected domain $\O=(-0.5,0.5)^3$ with four intersecting fractures $\G_{12} = \{(x,y,z)\in\O\mid x=0,y>0\}$, $\G_{23} = \{(x,y,z)\in\O\mid y=0,x>0\}$, $\G_{34} = \{(x,y,z)\in\O\mid x=0,y<0\}$ and $\G_{14} = \{(x,y,z)\in\O\mid y=0,x<0\}$.
We also introduce the piecewise disjoint, connex subspaces of $\O$, $\O_1 = \{(x,y,z)\in\O\mid y>0,x<0\}$, $\O_2 = \{(x,y,z)\in\O\mid y>0,x>0\}$, $\O_3 = \{(x,y,z)\in\O\mid y<0,x>0\}$ and $\O_4 = \{(x,y,z)\in\O\mid y<0,x<0\}$.

\paragraph{Derivation:}
For $(u_m,u_f)\in V$, we denote
$
u_m(x,y,z) = u_i(x,y,z)\text{ on }\O_i,\, { i = 1,\dots,4}
$
and
$
u_f(x,y,z) = u_{ij}(y,z)\text{ on }\G_{ij},\, ij\in J,
$
where we have introduced $J = \{12,23,34,14\}$. We assume that a solution of the discontinuous pressure model writes in the fracture network
$
u_{ij}(y,z) = \alpha_f(z) + \beta_{ij}(z)\gamma_{ij}(y),\, ij\in J
$
and in the matrix domain
\begin{eqnarray*}
\left\{\begin{array}{r@{\,\,}c@{\,\,}ll}
u_{1}(x,y,z) &=& \alpha_1(z)u_{12}(y,z)u_{14}(x,z)\\
u_{2}(x,y,z) &=& \alpha_2(z) u_{12}(y,z)u_{23}(x,z)\\
u_{3}(x,y,z) &=& \alpha_3(z) u_{34}(y,z)u_{23}(x,z)\\
u_{4}(x,y,z) &=& \alpha_4(z) u_{34}(y,z)u_{14}(x,z).
\end{array}\right.
\end{eqnarray*}
On $\gamma_{ij},ij\in J$ we assume
$
\gamma_{ij}(0) = 0,
$
such that the continuity of $u_f$ is well established at the fracture-fracture intersection, as well as
$
\gamma_{ij}'(0) = 1,
$
to ease the following calculations. 
For $i = 1,\dots,4$ let $K_i = \Lambda_{m}\restriction_{\Omega_i}$ and for $ij\in J$ let $T_{ij} = T_f\restriction_{\G_{ij}}$.
From the conditions 
$
\gamma_{\n,\alpha} \q_m = T_f (\gamma_\alpha u_m - u_f) \mbox{ on } \G_\alpha,\ \alpha\in\chi,
$
we then get, after some effort in computation,
\begin{equation}
\label{alphai}
\left.\begin{array}{lll}
&\dsp\alpha_1(z) = \dsp \( \alpha_f(z) - \frac{K_{1y}}{T_{14}}\beta_{12}(z) \)^{-1},  
&\dsp\alpha_2(z) = \dsp \( \alpha_f(z) - \frac{K_{1y}K_{2x}K_{3y}K_{4x}}{K_{1x}K_{3x}K_{4y}T_{23}}\beta_{12}(z) \)^{-1},\\
&\dsp\alpha_3(z) = \dsp \( \alpha_f(z) - \frac{K_{1y}K_{3y}K_{4x}T_{12}}{K_{1x}K_{4y}T_{23}T_{34}}\beta_{12}(z) \)^{-1},
&\dsp\alpha_4(z) = \dsp \( \alpha_f(z) - \frac{K_{1y}K_{4x}T_{12}}{K_{1x}T_{14}T_{34}}\beta_{12}(z) \)^{-1},\\
&\dsp\beta_{23}(z) = \dsp \frac{K_{1y}K_{3y}K_{4x}T_{12}}{K_{1x}K_{3x}K_{4y}T_{23}}\beta_{12}(z),
&\dsp \beta_{34}(z) = \dsp -\frac{K_{1y}K_{4x}T_{12}}{K_{1x}K_{4y}T_{34}}\beta_{12}(z)\\
&\dsp \beta_{14}(z) = \dsp -\frac{K_{1y}T_{12}}{K_{1x}T_{14}}\beta_{12}(z), 
&\dsp \frac{K_{1y}K_{2x}K_{3y}K_{4x}}{K_{1x}K_{2y}K_{3x}K_{4y}} = 1.
\end{array}\right.
\end{equation}
Obviously, we have taken $\alpha_f$ and $\beta_{12}$ as degrees of freedom, here. 
However, these functions must be chosen in such a way that $\frac{1}{\alpha_i(z)}\neq 0$ for $i=1,\dots,4$.
\begin{remark}
We would like to explicitly calculate the jump at the matrix-fracture interfaces for this class of solutions. At $\G_{ij}$ we have
\begin{align*}
u_i(0,y,z) - u_j(0,y,z) &= ( \alpha_i(z) - \alpha_j(z) )\cdot\alpha_f(z)\cdot u_{ij}(y,z),\qquad \text{ for }ij\in \{12,34\}\\
u_i(x,0,z) - u_j(x,0,z) &= ( \alpha_i(z) - \alpha_j(z) )\cdot\alpha_f(z)\cdot u_{ij}(x,z),\qquad \text{ for }ij\in \{23,14\}.
\end{align*}
From \eqref{alphai}, we observe, that the pressure becomes continuous at the matrix-fracture interfaces, as the $T_{ij}$ tend to $\infty$ uniformly.
\end{remark}

\begin{remark}
In order to obtain solutions with discontinuities at the matrix-fracture interfaces, we had to omit the constraint of flux conservation at fracture-fracture intersections.
\end{remark}

\subsection{Test Case}
We define a solution by setting
$\alpha_f(z) = e^{sin(\pi z)}$, 
$\beta_{12}(z) = -1$, 
$\gamma_{12}(y) = cos(2\pi y) + y - 1$, 
$\gamma_{23}(x) = x$, 
$\gamma_{34}(y) = - e^{cos(\pi y)} + y + e$, 
$\gamma_{14}(x) = \frac{sin(\pi x)}{\pi}$. 
The parameters we used for the different test cases are
\begin{itemize}
\item Isotropic Heterogeneous Permeability:
\begin{align*}
K_{1x} &= K_{1y} = K_{1z} =  1,\ K_{2x} = K_{2y} = K_{2z} = 100,\\
K_{3x} &= K_{3y} = K_{3z} = 3,\ K_{4x} = K_{4y} = K_{4z} = 40,\\
T_{12} &= 1,\ T_{23} = 0.2,\ T_{34} = 100,\ T_{14} = 10,\\
K_{12} &= 1,\ K_{23} = 2,\ K_{34} = 3,\ K_{14} = 10.
\end{align*}
\item Anisotropic Heterogeneous Permeability:
\begin{align*}
K_{1x} &= K_{1z} = 1,\ K_{1y} = 50,\ K_{2x} = K_{2z} = 2,\ K_{2y} = 100,\\ 
K_{3y} &= K_{3z} = 3,\ K_{3x} = 30,\  K_{4z} = 4,\ K_{4x} = K_{4y} = 40,\\ 
T_{12} &= T_{23} = T_{34} = T_{14} = 1,\\
K_{12} &= K_{23} = K_{34} = K_{14} = 1.
\end{align*}
\end{itemize}
In the following figures we plot the normalized $L^2$ norms of the errors, which are calculated as follows:
\begin{itemize}
\item normalized error of the solution: $err_{sol} = \frac{\|\Pi_{\D_m}u_{\D_m} - u_m\|_{L^2(\O)} + \|\Pi_{\D_f}u_{\D_f} - u_f\|_{L^2(\G)}}
{\|u_m\|_{L^2(\O)} + \|u_f\|_{L^2(\G)}}$
\item normalized error of the gradient: $err_{grad} = \frac{ \|\grad_{\D_m}u_{\D_m} - \grad u_m\|_{{L^2(\O)}^d} + \|\grad_{\D_f}u_{\D_f} 
- \grad_\tau u_f\|_{{L^2(\G)}^{d-1}} }
{ \|\grad u_m\|_{{L^2(\O)}^d} + \|\grad_\tau u_f\|_{{L^2(\G)}^{d-1}} }$
\end{itemize}
In the following tables is additionally found the normalized error of the jump: \linebreak
$err_{jump} = \frac{ \sum_{\alpha\in\chi} \|\Pi_{\D_m}^\alpha u_{\D_m} - \Pi_{\D_f}u_{\D_f} - \trace_\alpha u_m + u_f\|_{L^2(\G_\alpha)} }
{ \sum_{\alpha\in\chi} \|\trace_\alpha u_m + u_f\|_{L^2(\G_\alpha)} }$.\\

\begin{table}[H]
\begin{center}
\resizebox{0.47\textheight}{!}{
\begin{tabular}{|c|c|c|c|c|c|c|c|c|c|}
\cline{3-8}
\multicolumn{2}{c|}{}  & \multicolumn{3}{c|}{\textbf{\textit{VAG}}}  & \multicolumn{3}{c|}{\textbf{\textit{HFV}}} \\ \hline
\multicolumn{8}{|l|}{\textit{Hexahedral Meshes}}                                                                                                                                                                                                                                                                                                \\ \hline
\multicolumn{1}{|l|}{\textbf{Key}} & \multicolumn{1}{l|}{\textbf{Nb Cells}} 
& \multicolumn{1}{l|}{\textbf{Nb dof}} & \multicolumn{1}{l|}{\textbf{Nb dof el.}} & \multicolumn{1}{l|}{\textbf{Nb Jac}} 
& \multicolumn{1}{l|}{\textbf{Nb dof}} & \multicolumn{1}{l|}{\textbf{Nb dof el.}} & \multicolumn{1}{l|}{\textbf{Nb Jac}}  \\ \hline
1 &         512 &       1949 &        1437 &       31253 &        2776 &        2264 &       20696    \\\hline
2 &        4096 &       11701 &        7605 &      178845 &		19248 &       15152 &      150320   \\\hline
3 &       32768 &      79205 &       46437 &    1154861 &	142432 &      109664 &     1141856 \\\hline
4 &      262144 &      578245 &      316101 &     8152653 &  1093824 &      831680 &     8892608\\\hline
5 &     2097152 &     4408709 &     2311557 &    60910733 &  	8569216 &     6472064 &    70173056
 \\\hline\hline  
 \multicolumn{8}{|l|}{\textit{Tetrahedral Meshes}}                                                                                                                                                                                                                                                                                                \\ \hline
6 &        1337 &        2514 &        1177 &       18729 &        4943 &        3606 &       22642 \\ \hline
7 &       10706 &       15765 &        5059 &       81741 &        35520 &       24814 &      164246\\ \hline
8 &      100782 &      131204 &       30422 &      492158 &      317367 &      216585 &     1474817 \\ \hline
9 &      220106 &      279281 &       59175 &      956659 &      685718 &      465612 &     3190244 \\ \hline
10 &      428538 &      533442 &      104904 &     1694008 &     1324614 &      896076 &     6167300 \\ \hline
11 &     2027449 &     2452416 &      424967 &     6818299 &     6193783 &     4166334 &    28862986 \\ \hline\hline
\end{tabular}}
\caption{\textbf{Key} defines the mesh reference; \textbf{Nb Cells} is the number of cells of the mesh;
\textbf{Nb dof} is the number of discrete unknowns; \textbf{Nb dof el.} is the number of discrete unknowns after elimination of cell unknowns; \textbf{Nb Jac} refers to the number of non-zero Jacobian entries after elimination of the cell unknowns and equations.}
\label{table1}
\end{center}
\end{table}

\begin{figure}[H]
\begin{center}
\includegraphics[height=0.285\textheight]{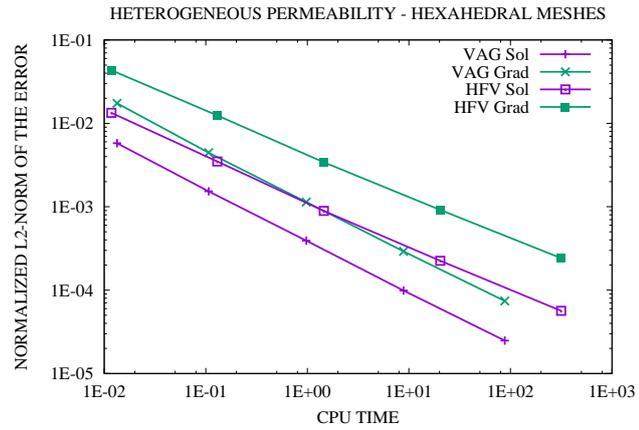}   
\includegraphics[height=0.285\textheight]{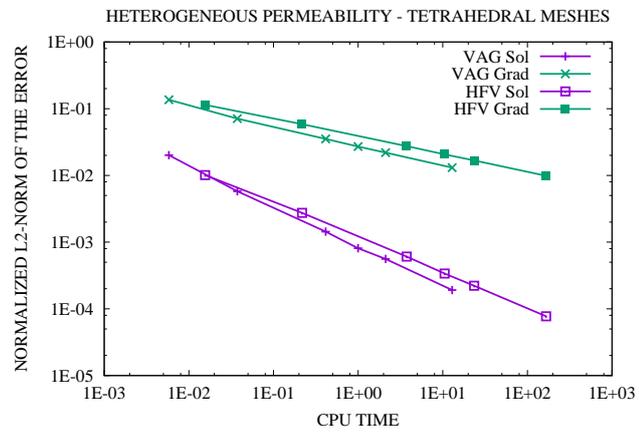}   
\caption{Heterogeneous Permeability: Comparison of VAG-FE and HFV on hexahedral and 
tetrahedral meshes.}
\end{center}
\end{figure}
\begin{figure}[h!]
\begin{center}
\includegraphics[height=0.285\textheight]{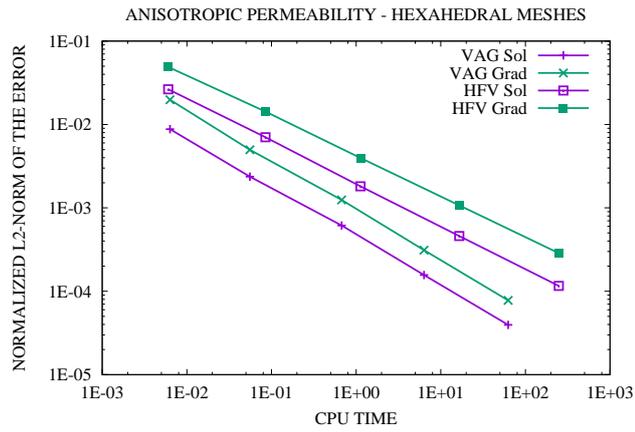}   
\includegraphics[height=0.285\textheight]{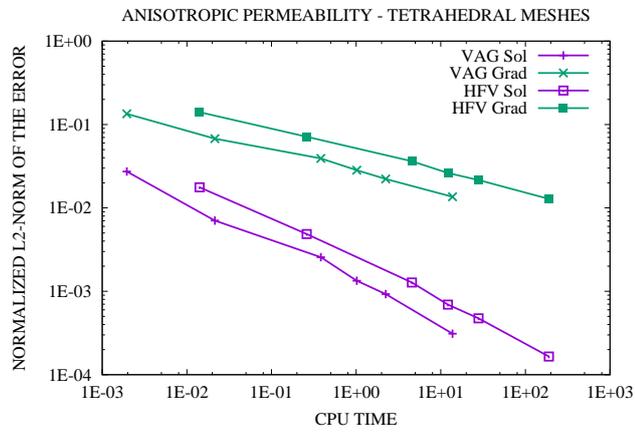}   
\caption{Anisotropic Permeability: Comparison of VAG-FE and HFV on hexahedral and tetrahedral meshes.}
\end{center}
\end{figure}

\clearpage

\begin{table}[H]
\begin{center}
\resizebox{0.47\textheight}{!}{
\begin{tabular}{|c|c|c|c|c|c|c|c|c|}
\cline{2-9}
\multicolumn{1}{c|}{}  & \multicolumn{8}{c|}{\textbf{\textbf{Heterogeneous Permeability: }\textit{VAG}}} 
\\\hline
\multicolumn{9}{|l|}{\textit{Hexahedral Meshes}}
\\ \hline
\multicolumn{1}{|l|}{\textbf{Key}}
& \multicolumn{1}{|l|}{\textbf{Iter}} & \multicolumn{1}{|l|}{\textbf{CPU}} & \multicolumn{1}{|l|}{\textbf{$err_{sol}$}}  
& \multicolumn{1}{|l|}{\textbf{$err_{grad}$}} & \multicolumn{1}{|l|}{\textbf{$err_{jump}$}}
& \multicolumn{1}{|l|}{\textbf{$\alpha_{sol}$}} & \multicolumn{1}{|l|}{\textbf{$\alpha_{grad}$}} & \multicolumn{1}{|l|}{\textbf{$\alpha_{jump}$}}
\\ \hline
1 &	  	  8 &	   1.34E-2 &	   5.78E-3 &	   1.74E-2 &	   8.99E-3 &		   1.92 &	        1.97 &	        1.83	\\\hline
2 &	 	 12 &   0.11 &	       1.53E-3 &	   4.44E-3 &	   2.53E-3 &	   		1.92 &	        1.97 &	        1.83 \\\hline
3 &	      22 &  0.98 &		   3.92E-4 &   1.14E-3 &	   6.72E-4 &		   1.97 &	        1.96	&	        1.91  \\\hline
4 &    	 41 &   8.86 &         9.89E-5 &   2.91E-4 &	   1.73E-4 &		   1.99 &			1.97 &	        1.96 \\\hline
5 &		  79 &   87.91 &      2.48E-5 &   7.40E-5 &	   4.40E-5 &		  	1.99 &        1.98 &	        1.98
 \\\hline\hline
 \multicolumn{9}{|l|}{\textit{Tetrahedral Meshes}}
\\ \hline
6 &		  7 &	   5.82E-3 &	      2.01E-2 &	  0.14 &        2.25E-2 &	   1.80 &       0.94 &        1.68	  	\\\hline
7 &		  10 &   3.73E-2 &	   5.78E-3 &   7.09E-2 &   7.03E-3 &	   1.80 &       0.94 &        1.68		 \\\hline
8 &  		  20 &	  0.41 &	      1.44E-3 &   3.52E-2 &   1.81E-3 &	   1.86 &       0.94 &        1.82   \\\hline
9 &        26 &	  1.00 &	       8.11E-4 &   2.71E-2 &   1.06E-3 &	   2.20 &        1.01 &        2.06      \\\hline
10 &		  32 &	   2.11 &		   5.60E-4 &   2.19E-2 &   7.36E-4 &       1.67 &       0.95 &       1.62    \\\hline
11 &	 	 53 &	   12.92 &		   1.92E-4 &   1.31E-2 &   2.58E-4 &	   2.07 &        1.00 &        2.03
 \\\hline\hline
\end{tabular}}
\vskip 0.5cm
\resizebox{0.47\textheight}{!}{
\begin{tabular}{|c|c|c|c|c|c|c|c|c|}
\cline{2-9}
\multicolumn{1}{c|}{}  & \multicolumn{8}{c|}{\textbf{\textbf{Heterogeneous Permeability: }\textit{HFV}}} 
\\\hline
\multicolumn{9}{|l|}{\textit{Hexahedral Meshes}}
\\ \hline
\multicolumn{1}{|l|}{\textbf{Key}}
& \multicolumn{1}{|l|}{\textbf{Iter}} & \multicolumn{1}{|l|}{\textbf{CPU}} & \multicolumn{1}{|l|}{\textbf{$err_{sol}$}}  
& \multicolumn{1}{|l|}{\textbf{$err_{grad}$}} & \multicolumn{1}{|l|}{\textbf{$err_{jump}$}}
& \multicolumn{1}{|l|}{\textbf{$\alpha_{sol}$}} & \multicolumn{1}{|l|}{\textbf{$\alpha_{grad}$}} & \multicolumn{1}{|l|}{\textbf{$\alpha_{jump}$}}
\\ \hline
1 &	  	   11 &	   1.18E-2 &	      1.34E-2 &	   4.3E-2 &	   2.15E-2 &	   1.94 &        1.80 &        1.98	\\\hline
2 &	 	  19 &		  0.13 &	       3.49E-3 &   1.24E-2 &	   5.44E-3 &	   1.94 &        1.80 &        1.98 \\\hline
3 &	       35 &	   1.45 &			   8.91E-4 &   3.41E-3 &	   1.38E-3 &	   1.97 &        1.86 &        1.98  \\\hline
4 &    	  73 &	   20.36 &		        2.25E-4 &   9.15E-4 &   3.47E-4 &	   1.99  &       1.90 &        1.99 \\\hline
5 &		   141 &   315.38 &        5.65E-5 &	   2.42E-4 &   8.69E-5 &	   1.99 &        1.92 &        2.00
 \\\hline\hline
 \multicolumn{9}{|l|}{\textit{Tetrahedral Meshes}}
\\ \hline
6 &		   12 &		   1.56E-2 &	   1.01E-2 &		  0.11 &        1.74E-2  &	   1.88 &       0.96 &        1.73  	\\\hline
7 &		   21 &		  0.22 &	       2.74E-3 &		   5.87E-2 &   5.24E-3 &	   1.88 &       0.96 &        1.73		 \\\hline
8 &  		   43 &		   3.75 &	       6.07E-4 &		   2.75E-2 &   1.17E-3 &	   2.02 &        1.02 &        2.00   \\\hline
9 &         60 &		   10.51 &		   3.38E-4 &		   2.07E-2 &   6.62E-4 &	   2.25 &        1.08 &        2.20      \\\hline
10 &		  73 &		   23.52 &	       2.22E-4 &		   1.68E-2 &   4.37E-4 &	   1.90 &	       0.94 &        1.87     \\\hline
11 &	 	 119 &		   166.46 &      7.73E-5 &	   9.87E-3 &   1.58E-4 &	   2.03 &        1.02 &        1.96 
 \\\hline\hline
\end{tabular}}
\end{center}
\caption{ Isotropic test case. \textbf{Key} refers to the mesh defined in table \ref{table1}; \textbf{Iter} is the number of solver iterations; 
\textbf{CPU} refers to the solver CPU time in seconds; \textbf{$err_{sol}, err_{grad}, err_{jump}$} are the respective $L^2$-errors as defined above;
\textbf{$\alpha_{sol}, \alpha_{grad}, \alpha_{jump}$} are the orders of convergence of the solution, of the gradient and of the jump, respectively.}
\label{table2}
\end{table}
%
\begin{table}[H]
\begin{center}
\resizebox{0.47\textheight}{!}{
\begin{tabular}{|c|c|c|c|c|c|c|c|c|}
\cline{2-9}
\multicolumn{1}{c|}{}  & \multicolumn{8}{c|}{\textbf{\textbf{Anisotropic Permeability: }\textit{VAG}}} 
\\\hline
\multicolumn{9}{|l|}{\textit{Hexahedral Meshes}}
\\ \hline
\multicolumn{1}{|l|}{\textbf{Key}}
& \multicolumn{1}{|l|}{\textbf{Iter}} & \multicolumn{1}{|l|}{\textbf{CPU}} & \multicolumn{1}{|l|}{\textbf{$err_{sol}$}}  
& \multicolumn{1}{|l|}{\textbf{$err_{grad}$}} & \multicolumn{1}{|l|}{\textbf{$err_{jump}$}}
& \multicolumn{1}{|l|}{\textbf{$\alpha_{sol}$}} & \multicolumn{1}{|l|}{\textbf{$\alpha_{grad}$}} & \multicolumn{1}{|l|}{\textbf{$\alpha_{jump}$}}
\\ \hline
1 &	  	 7 &	   6.32E-3 &		   8.78E-3 &	   1.98E-2 &	   8.69E-3 &    1.89 &        1.99 &        1.89		\\\hline
2 &	 	 9 &	   5.56E-2 &		   2.37E-3 &    4.97E-3 &   2.34E-3 &	   1.89 &        1.99 &        1.89 \\\hline
3 &	     14 &  0.67 &		       6.15E-4 &	   1.24E-3 &    6.06E-4 &   1.95 &        2.00 &        1.95  \\\hline
4 &    	 26 &   6.35 &		       2.28E-4  &   1.57E-4 &   3.11E-4 &    1.97 &        2.00 &		 1.97 \\\hline
5 &		 47 &   62.65 &		    3.95E-5 &   7.78E-5 &	   3.89E-5 &  1.99 &        2.00 & 			1.99
 \\\hline\hline
 \multicolumn{9}{|l|}{\textit{Tetrahedral Meshes}}
\\ \hline
6 &		 7&	   1.95E-3 &	      2.73E-2 &	  0.13 &        2.70E-2 &	   1.95 &       0.99 &        1.95	  	\\\hline
7 &		 8 &	   2.14E-2 &	      7.05E-3 &    6.76E-2 &   6.98E-3 &	   1.95 &       0.99 &        1.95		 \\\hline
8 &  		 15 &   0.38 &		      2.56E-3 &   3.92E-2 &   2.53E-3 &	   1.35 &       0.73 &        1.36   \\\hline
9 &       21 &   1.02 &	          1.34E-3 &   2.84E-2 &   1.32E-3 &	   2.49 &        1.24 &        2.49      \\\hline
10 &		  25 &   2.24 &	          9.26E-4 &   2.22E-2 &   9.14E-4 &	   1.66 &        1.10 &        1.67   \\\hline
11 &	 	 41 &   13.78 &	       3.10E-4 &   1.36E-2 &   3.07E-4 &	   2.11 &       0.95 &        2.11
 \\\hline\hline
\end{tabular}}
\vskip 0.5cm
\resizebox{0.47\textheight}{!}{
\begin{tabular}{|c|c|c|c|c|c|c|c|c|}
\cline{2-9}
\multicolumn{1}{c|}{}  & \multicolumn{8}{c|}{\textbf{\textbf{Anisotropic Permeability: }\textit{HFV}}} 
\\\hline
\multicolumn{9}{|l|}{\textit{Hexahedral Meshes}}
\\ \hline
\multicolumn{1}{|l|}{\textbf{Key}}
& \multicolumn{1}{|l|}{\textbf{Iter}} & \multicolumn{1}{|l|}{\textbf{CPU}} & \multicolumn{1}{|l|}{\textbf{$err_{sol}$}}  
& \multicolumn{1}{|l|}{\textbf{$err_{grad}$}} & \multicolumn{1}{|l|}{\textbf{$err_{jump}$}}
& \multicolumn{1}{|l|}{\textbf{$\alpha_{sol}$}} & \multicolumn{1}{|l|}{\textbf{$\alpha_{grad}$}} & \multicolumn{1}{|l|}{\textbf{$\alpha_{jump}$}}
\\ \hline
1 &	  	  9 &		   6.02E-3 &	      2.64E-2 &	   4.89E-2 &	   3.35E-2 &		1.91 &        1.78 &        2.01	\\\hline
2 &	 	 16 &	   8.48E-2 &		  7.02E-3 &	   1.43E-2 &	   8.30E-3 &	   	1.91 &        1.78 &        2.01 \\\hline
3 &	      29 &	   1.13 &		       1.81E-3 &	   3.96E-3 &	   2.07E-3 &	   1.95 &	        1.85 &	        2.00  \\\hline
4 &    	  55 &	   16.55 &	        4.60E-4 &	   1.07E-3 &	   5.19E-4 &	   1.98 &	        1.89 &        2.00 \\\hline
5 &		  108 &   248.20 &        1.16E-4 &	   2.86E-4 &	   1.30E-4 &	   1.99 &	        1.91 &        2.00
 \\\hline\hline
 \multicolumn{9}{|l|}{\textit{Tetrahedral Meshes}}
\\ \hline
6 &		  10 &	   1.41E-2 &   1.77E-2 &	  0.14 &        1.79E-2 &	   1.86 &       0.98 &        1.91	  	\\\hline
7 &		  19 &	  0.26 &		   4.86E-3 &   7.13E-2 &   4.75E-3 &	   1.86 &       0.98 &        1.91	 \\\hline
8 &  		  37 &	   4.56 &	       1.28E-3 &   3.63E-2 &   1.21E-3 &   	1.79 &       0.90 &        1.83   \\\hline
9 &        47 &	   12.16 &		   6.92E-4 &   2.62E-2 &   6.66E-4 &	   2.35 &        1.25 &        2.28      \\\hline
10 &		  63 &	   27.96 &		   4.75E-4 &   2.16E-2 &   4.68E-4 &	   1.69 &       0.88 &        1.59  \\\hline
11 &	 	  105 &   189.66 &      1.65E-4 &   1.28E-2 &   1.58E-4 &	   2.04 &        1.00 &        2.09
 \\\hline\hline
\end{tabular}}
\vskip 0.5cm
\resizebox{0.47\textheight}{!}{
\begin{tabular}{|c|c|c|c|c|c|c|c|c|}
\cline{2-9}
\multicolumn{1}{c|}{}  & \multicolumn{8}{c|}{\textbf{\textbf{Anisotropic Permeability: }\textit{VAG Lump}}} 
\\\hline
\multicolumn{9}{|l|}{\textit{Hexahedral Meshes}}
\\ \hline
\multicolumn{1}{|l|}{\textbf{Key}}
& \multicolumn{1}{|l|}{\textbf{Iter}} & \multicolumn{1}{|l|}{\textbf{CPU}} & \multicolumn{1}{|l|}{\textbf{$err_{sol}$}}  
& \multicolumn{1}{|l|}{\textbf{$err_{grad}$}} & \multicolumn{1}{|l|}{\textbf{$err_{jump}$}}
& \multicolumn{1}{|l|}{\textbf{$\alpha_{sol}$}} & \multicolumn{1}{|l|}{\textbf{$\alpha_{grad}$}} & \multicolumn{1}{|l|}{\textbf{$\alpha_{jump}$}}
\\ \hline
1 &	  	  7 &	   3.90E-3 &	      9.09E-3 &	   2.01E-2 &	   9.06E-3 &	   1.89 &        1.99 &        1.89		\\\hline
2 &	 	  9 &	   5.15E-2 &		  2.46E-3 &	   5.06E-3 &	   2.44E-3 &	   1.89 &        1.99 &        1.89 \\\hline
3 &	     15 &  0.66 &		     6.37E-4 &	   1.27E-3 &	   6.34E-4 &	   1.95 &        2.00 &        1.95   \\\hline
4 &    	  26 &   6.39 &	          1.62E-4 &	   3.17E-4 &	   1.61E-4 &	   1.97 &        2.00 &        1.97 \\\hline
5 &		  47 &   62.19 &        4.09E-5 &	   7.93E-5 &	   4.07E-5 &	   1.99 &        2.00 &        1.99
 \\\hline\hline
 \multicolumn{9}{|l|}{\textit{Tetrahedral Meshes}}
\\ \hline
6 &		  7 &	   2.11E-3 &		  2.75E-2 &	  0.13 &        2.73E-2 &	   1.95 &       0.99 &        1.94	  	\\\hline
7 &		  8 &   2.00E-2 &		  7.14E-3 &	   6.76E-2 &   7.10E-3 &	   1.95 &       0.99 &        1.94		 \\\hline
8 &  		  15 &  0.38 &			   2.60E-3 &   3.92E-2 &   2.58E-3 &        1.35 &       0.73 &        1.35   \\\hline
9 &       21 &   1.02 &			   1.36E-3 &   2.84E-2 &   1.35E-3 &	   2.48 &        1.24 &        2.49     \\\hline
10 &		 25 &   2.24 &	           9.40E-4 &   2.22E-2 &   9.33E-4 &	   1.66 &        1.10 &        1.67    \\\hline
11 &	 	  41 &   13.91 &         3.15E-4 &   1.36E-2 &   3.13E-4 &	  2.11 &	       0.95 &        2.11
 \\\hline\hline
\end{tabular}}
\end{center}
\caption{ Anisotropic test case. \textbf{Key} refers to the mesh defined in table \ref{table1}; \textbf{Iter} is the number of solver iterations; 
\textbf{CPU} refers to the solver CPU time in seconds; \textbf{$err_{sol}, err_{grad}, err_{jump}$} are the respective $L^2$-errors as defined above;
\textbf{$\alpha_{sol}, \alpha_{grad}, \alpha_{jump}$} are the orders of convergence w.r.t. $\#\cells^{-{1\over 3}}$ 
of the solution, of the gradient and of the jump, respectively.}
\label{table3}
\end{table}

The test case shows that, on cartesian grids, we obtain, as classically expected, convergence of order 2 for both, 
the solution and it's gradient. For tetrahedral grids, we obtain convergence of order 2 for the solution 
and convergence of order 1 for it's gradient. We observe that the VAG scheme is more efficient 
then the HFV scheme and this observation gets more obvious with increasing anisotropy. 
Comparing the precision of the discrete solution (and it's gradient) for VAG and HFV on a given mesh, 
we see that on hexahedral meshes, the advantage is on the side of VAG, whereas on tetrahedral meshes HFV 
is more precise (but much more expensive). On a given mesh, HFV is usually (see \cite{GSDFN}) 
more accurate than VAG both for tetrahedral and hexahedral meshes. This is not the case for our test cases on Cartesian meshes 
maybe due to the higher number for VAG 
than for HFV of d.o.f. at the interfaces $\Gamma_\alpha$ on the matrix side. 
It is also important to notice that there is literally no difference between VAG 
with finite element respectively lumped \emph{mf}-fluxes concerning accuracy and convergence rate.

\section{Conclusion}

In this work, we extended the framework of gradient schemes (see \cite{Eymard.Herbin.ea:2010}) to the model problem \eqref{modeleCont} of 
stationary Darcy flow through fractured porous media and gave numerical analysis results for this general framework.

The model problem (an extension to a network of fractures 
of a PDE model presented in \linebreak
\cite{FNFM03}, \cite{MJE05} and \cite{ABH09}) 
takes heterogeneities and anisotropy 
of the porous medium into account and involves a complex network of planar fractures, 
which might act either as barriers or as drains.

We also extended the VAG and HFV schemes to our model, 
where fractures acting as barriers force us to allow for pressure jumps across the fracture network. 
We developed two versions of VAG schemes, the conforming finite element version and the non-conforming control volume version, the latter particularly adapted for the treatment of material interfaces (cf. \cite{EHGM2012}). 
We showed, furthermore, that both versions of VAG schemes, 
as well as the proposed non-conforming HFV schemes, are incorporated by the gradient scheme's framework.
Then, we applied the results for gradient schemes on VAG and HFV to obtain convergence, and, in particular, convergence of order 1 
for "piecewise regular" solutions.

For implementation purposes and in view of the application to multi-phase flow, we also proposed a uniform Finite Volume formulation for VAG and HFV schemes.
The numerical experiments on a family of analytical solutions show that 
the VAG scheme offers a better compromise between accuracy and CPU time than the HFV scheme 
especially for anisotropic problems. 

\vskip 1cm 
\noindent{\bf Acknowledgements}: the authors would like to thank TOTAL for its financial support 
and for allowing the publication of this work.


\begin{thebibliography}{10}

\bibitem{AFSVV15} 
Antonietti, P.F., Formaggia, L., Scotti, A., Verani, M., Verzotti, N., 
Mimetic Finite Difference Approximation of flows in Fractured Porous Media, MOX Report No 20/2015, 2015. 

\bibitem{AELH14}
Ahmed, R., Edwards, M.G., Lamine, S., Huisman, B.A.H.,  
Control-volume distributed multi-point flux approximation coupled with a lower-dimensional fracture model, 
J. Comp. Physics, 462-489, Vol. 284, 2015. 


\bibitem{ABH09} P. Angot, F. Boyer, F. Hubert, 
Asymptotic and numerical modelling of flows in fractured porous media, 
M2AN, 2009. 

\bibitem{BM2013} K. Brenner, R. Masson, 
Convergence of a Vertex centered Discretization
of Two-Phase Darcy flows on General Meshes, Int. Journal of Finite Volume Methods, june 2013. 

\bibitem{BGGM14} 
Brenner, K., Groza, M., Guichard, C., Masson, R. 
Vertex Approximate Gradient Scheme for Hybrid Dimensional Two-Phase Darcy Flows in Fractured Porous Media. 
ESAIM Mathematical Modelling and Numerical Analysis, 49, 303-330 (2015). 

\bibitem{DS12} D'Angelo, C., Scotti, A.: 
A mixed finite element method for Darcy flow 
in fractured porous media with non-matching grids. 
ESAIM Mathematical Modelling and Numerical Analysis 46,2, 465-489 (2012). 

\bibitem{Eymard.Herbin.ea:2010} R. Eymard, C. Guichard, and R. Herbin, 
Small-stencil 3D schemes for diffusive flows in porous media.
ESAIM: Mathematical Modelling and Numerical Analysis, 46, pp. 265-290, 2010.

\bibitem{EGH09} Eymard, R., Gallou\"{e}t, T., Herbin, R.: 
Discretization of heterogeneous and anisotropic diffusion problems 
on general nonconforming meshes SUSHI: 
a scheme using stabilisation and hybrid interfaces. IMA J Numer Anal (2010) 30 (4): 1009-1043. 

\bibitem{EHGM2012}
R. Eymard, R. Herbin, C. Guichard, R. Masson, Vertex centered discretization
  of compositional multiphase darcy flows on general meshes.
\newblock Comp. Geosciences, \textbf{16}, 987-1005 (2012)


\bibitem{FNFM03} E. Flauraud, F. Nataf, I. Faille, R. Masson, Domain Decomposition 
for an asymptotic geological fault modeling, 
Comptes Rendus \`a l'acad\'emie des Sciences, M\'ecanique, 331, pp 849-855, 2003.  

\bibitem{KDA04} M. Karimi-Fard, L.J. Durlovski, K. Aziz, 
An efficient discrete-fracture model applicable for general-purpose reservoir simulators, 
SPE journal, june 2004. 

\bibitem{MJE05}
V. Martin, J. Jaffr\'e, J. E. Roberts, Modeling fractures and barriers as interfaces 
for flow in porous media, SIAM J. Sci. Comput. 26 (5), pp. 1667-1691, 2005.

\bibitem{TFGCH12} X. Tunc, I. Faille, T. Gallou\"{e}t, M.C. Cacas, P. Hav\'e, 
A model for conductive faults with non matching grids, 
Comp. Geosciences, 16, pp. 277-296, 2012. 

\bibitem{FFSR14} Formaggia, L., Fumagalli, A., Scotti, A., Ruffo, P.: 
A reduced model for Darcy's problem in networks of fractures.  
ESAIM Mathematical Modelling and Numerical Analysis 48,4, 1089-1116 (2014).  

\bibitem{Raviart} 
P.A. Raviart, R\'esolution Des Mod\`eles Aux D\'eriv\'ees Partielles, Ecole Polytechnique,
D\'epartement de Math\'ematiques appliqu\'ees, Ed. 1992

\bibitem{SBN12} T.H. Sandve, I. Berre, J.M. Nordbotten. 
An efficient multi-point flux approximation method for 
Discrete Fracture-Matrix simulations, JCP 231 pp. 3784-3800, 2012. 

\bibitem{Tartar} L. Tartar, An Introduction to Sobolev Spaces and Interpolation Spaces, Springer-Verlag Berlin Heidelberg, 2007

\bibitem{Adams} R. A. Adams, Sobolev Spaces, Academic Press New York San Francisco London, 1975

\bibitem{GSDFN} K. Brenner, M. Groza, C. Guichard, G. Lebeau, R. Masson, Gradient discretization of Hybrid Dimensional Darcy Flows in Fractured Porous Media, preprint https://hal.archives-ouvertes.fr/hal-00957203. 

\bibitem{RJBH06}
Reichenberger, V., Jakobs, H., Bastian, P., Helmig, R.:  
A mixed-dimensional finite volume method for multiphase flow in fractured porous media.  
Adv. Water Resources 29, 7, 1020-1036 (2006). 

\bibitem{koala} J. Droniou, R. Eymard, T. Gallou\"{e}t, C. Guichard, R. Herbin, Gradient schemes for elliptic and parabolic problems, Springer, in preparation

\bibitem{DEGH} Droniou, J., Eymard, R., Gallou\"{e}t, T., Herbin, R.: Gradient schemes: 
a generic framework for the discretisation of linear, nonlinear and nonlocal elliptic 
and parabolic equations. Math. Models Methods Appl. Sci. 23, 13, 2395-2432  (2013).

\bibitem{DEGH10} Droniou, J., Eymard, R., Gallou\"{e}t, T., Herbin, R.:  
A Unified Approach to Mimetic Finite Difference, Hybrid Finite Volume and Mixed Finite Volume Methods.  
Math. Models and Methods in Appl. Sci. 20,2, 265-295 (2010). 

\bibitem{Bre05} Brezzi F., Lipnikov K., Simoncini V., 
A family of mimetic finite difference methods on polygonal and polyhedral meshes, 
Mathematical Models and Methods in Applied Sciences, vol. 15, 10, 2005, 1533-1552.

\bibitem{ILUT1} Saad, Y.:  
Iterative Methods for Sparse Linear Systems. 
2nd edition, SIAM, Philadelphia, PA, (2003)

\bibitem{ILUT2} Saad, Y. ITSOL Library 2010, http://www-users.cs.umn.edu/~saad/software/

\bibitem{FFJR15} I. Faille, A. Fumagalli, J. Jaffr\'e, J. Robert, 
Reduced models for flow in porous media containing faults with discretization using 
hybrid finite volume schemes. https://hal-ifp.archives-ouvertes.fr/hal-01162048

\bibitem{MAE02} Alboin, C., Jaffr\'e, J., Roberts,  J., Serres, C.:   
Modeling fractures as interfaces for flow
and transport in porous media. Fluid flow and
transport in porous media 295, 13-24 (2002). 

\end{thebibliography}
\end{document}